\def\ps@pprintTitle{%
 \let\@oddhead\@empty
 \let\@evenhead\@empty
\def\@oddfoot{}%
 \let\@evenfoot\@oddfoot}
\newtheorem{theorem}{Theorem}
\newtheorem{lemma}[theorem]{Lemma}
\newtheorem{assumption}[theorem]{Assumption}
\newtheorem{remark}[theorem]{Remark}
\DeclareMathOperator{\spann}{span}
\DeclareMathOperator{\divergence}{div}
\newcommand{\R}{\mathbb{R}}
\begin{document}

\begin{frontmatter}

\title{Isogeometric mortar methods}

\author[IMATI,IUSS]{Ericka Brivadis}	
\ead{ericka.brivadis@iusspavia.it}
\author[IMATI]{Annalisa Buffa} 
\ead{annalisa@imati.cnr.it}
\author[TUM]{Barbara Wohlmuth}
\ead{wohlmuth@ma.tum.de}
\author[TUM]{Linus Wunderlich\corref{mycorrespondingauthor}}
\cortext[mycorrespondingauthor]{Corresponding author}
\ead{ linus.wunderlich@ma.tum.de}

\address[IMATI]{Istituto di Matematica Applicata e Tecnologie Informatiche del CNR, Via Ferrata~1, 27100~Pavia, Italy } 
\address[IUSS]{Istituto Universitario di Studi Superiori Pavia, Palazzo del Broletto, Piazza della Vittoria~15, 27100~Pavia, Italy } 
\address[TUM]{M2 - Zentrum Mathematik, Technische Universit\"at M\"unchen, Boltzmannstra\ss{}e~3, 85748~Garching, Germany }

\begin{abstract}
The application of mortar methods in the framework of isogeometric analysis is investigated theoretically as well as numerically. 
For the Lagrange multiplier two choices of uniformly stable spaces are presented, both of them are spline spaces but of a different degree. 
In one case, we consider an equal order pairing for which a cross point modification based on a local degree reduction is required. In the other case, the degree of the dual space is reduced by two compared to the primal. This pairing is proven to be inf-sup stable without any necessary cross point modification.
Several numerical examples confirm the theoretical results and illustrate additional aspects. 

\end{abstract}
\begin{keyword} 
isogeometric analysis\sep mortar methods\sep inf-sup stability\sep cross point modification
\MSC 65N30\sep  65N55 	
\end{keyword}

\end{frontmatter}

\linenumbers

\section{Introduction}
\label{chapter1}
The name isogeometric analysis was introduced in 2005 by  Hughes et al. in~\cite{hughes:05}. Nowadays it includes a family of methods, normally called isogeometric methods, that use B-Splines and non-uniform rational B-Splines (NURBS) as basis functions to construct numerical approximations of partial differential equations (PDEs). Originally, isogeometric analysis follows the isoparametric paradigm, i.e., the geometry is represented by functions which are used to approximate the PDE. In~\cite{beirao:14}, it was shown that this concept can be relaxed, also allowing NURBS for the parametrization and B-Splines defined on the same mesh for the approximation of the PDE.

With isogeometric methods, the computational domain is generally split into patches. Within this framework, techniques to couple the numerical solution on different patches are required. To retain the flexibility of the meshes at the interfaces, weak coupling methods are favorable in contrast to strong point-wise couplings. Thus it is interesting to consider mortar methods, which offer a flexible approach to domain decomposition, originally applied in spectral and finite element methods.  
Mortar methods have been successfully  investigated in the finite element context for over two decades,~\cite{bernardi:93, bernardi:94,ben_belgacem:97,ben_belgacem:99}, for a mathematical overview, see~\cite{wohlmuth:01}. Further applications of the mortar methods include contact problems,~\cite{wriggers:02, ben_belgacem:99b, krause:02, wohlmuth:11, wall:12}, and interface problems, e.g., in multi-physics applications,~\cite{kaltenbacher:10}. 
  
The isogeometric analysis, \cite{hughes:09, hughes:09b},  is currently a very active research area. It is attractive for a large variety of applications and 
there already exist a fair amount of mathematically sound results, recently collected in~\cite{beirao:14}. Besides variational approaches, the global smoothness of splines also allows the use of collocation methods, see~\cite{hughes:10}.

In several articles, the coupling of multipatch geometries has been investigated,~\cite{kleiss:12, nguyen:13,nguyen:14, schillinger:14, langer:14}, and successful applications of the mortar method are shown in~\cite{hesch:12,dornisch:14, bletzinger:14}. Additionally the use of mortar methods in contact simulations, where isogeometric methods have some advantages over finite element methods, was considered in~\cite{delorenzis:11, delorenzis:12, temizer:12, kim:12,hesch:14,temizer:14}.

The important point of an isogeometric mortar method is the choice of the Lagrange multiplier. From the classical mortar theory, two abstract requirements for the Lagrange multiplier space are given. One is the sufficient approximation order, the other is the requirement of an inf-sup stability. 
For a primal space of splines of degree p, we investigate three different degrees for the Lagrange multiplier: $p$, $p-1$ and $p-2$. Each choice is from some point of view natural but has quite different characteristic features.

This article is structured as follows. In Section~\ref{chapter2}, we recall basic properties of isogeometric methods.  The isogeometric mortar methods is then defined in Section~\ref{chapter3}. In Section~\ref{chapter4}, we complete the definition of our mortar methods by explicitly detailing three different types of Lagrange multipliers.  The theoretical results are investigated numerically in Section~\ref{chapter5}, where also additional aspects are considered.

\section{B-Splines and NURBS basics}\label{sec:isogeometric_method}
\label{chapter2}
In this section, we give a brief overview on the isogeometric functions and introduce some notations and concepts which are used throughout the paper. For more details, we refer to the classical literature~\cite{hughes:09,piegl:97,bazilevs:06,Schumaker:07}. Firstly, we introduce B-Splines in the one-dimensional case and recall some of their basic properties. Secondly, we extend these definitions to the multi-dimensional case and introduce NURBS and then NURBS parametrizations.

\subsection{Univariate B-Splines}\label{sec:iga_p1}
Let us denote by $p$ the degree of the univariate B-Splines and by $\Xi $ an open uni\-variate knot vector, where the first and last entries are repeated $(p+1)$-times, i.e.,
\begin{equation*}
	\Xi = \{ 0 = \xi_1 = \ldots = \xi_{p+1} < \xi_{p+2} \leq \ldots \leq \xi_{n} < \xi_{n+1} = \ldots= \xi_{n+p+1} = 1\}.
\end{equation*}
Let us define $Z = \{\zeta_1,\, \zeta_2,\, \ldots,\, \zeta_{E}\}$ as the knot vector without any repetition, also called breakpoint vector.
For each breakpoint $\zeta_j$ of $Z$, we define its multiplicity $m_j$ as its number of repetitions in $\Xi$. The elements in $Z$ form a partition of the parametric interval $(0, 1)$, i.e., a mesh. 

We denote by $\widehat{B}_i^p(\zeta)$, $i=1,\ldots, n$, the collection of B-Splines defined on $\Xi$ and by $S^p(\Xi)=\spann\{\widehat{B}_i^p(\zeta), \,i=1,\, \ldots,\, n\}$ the corresponding spline space.

We recall hereafter some important properties of the univariate B-Splines. 
Each $\widehat{B}_i^p$ is a piecewise positive polynomial of degree $p$ and has a local support, i.e.,
$\widehat{B}_i^p$ is non-zero only on at most $p+1$ elements and $\operatorname{supp} \widehat{B}_i^p = [\xi_i, \xi_{i+p+1}]$. Consequently on [$\zeta_i,\, \zeta_{i+1}$] at most $p+1$ basis functions have non-zero values. The inter-element continuity is defined by the breakpoint multiplicity. More precisely, we have that the basis functions are $C^{p-m_j}$ at each $\zeta_j \in Z$.

Assuming that $S^p(\Xi) \subset C^0(0,1)$ (i.e., $m_j\leq p, \,\,j=1,\, \ldots,\, E$), and let $\Xi'=\{ \xi_2, \, \ldots, \, \xi_{n+p}\}$, then the derivation operator $\partial_\zeta: S^p(\Xi) \rightarrow S^{p-1}(\Xi')$ is linear and surjective, see~\cite{beirao:14, Schumaker:07}. 

For spline spaces, different refinement strategies are available.  Further knots can be inserted ($h$-refinement), the degree can be elevated ($p$-refinement) and a combination of both is possible ($k$-refinement). We refer to~\cite{hughes:09, piegl:97} for some algorithmic details on the  refinement procedures. In the following, we only consider $h$-refinement, keeping the degree fixed during refinement.

\subsection{Multivariate B-Splines and NURBS}\label{sec:iga_p2}
Multivariate B-Splines are defined based on a tensor product of univariate B-Splines.
Let $d$ be the space dimension. For any direction $\delta=1,\,\ldots,\, d$, we introduce  $p_\delta$ the degree of the univariate B-Splines, $n_{\delta}$ the number of univariate B-Spline functions, $\Xi_\delta$ the univariate  open knot vector and $Z_\delta$ the univariate breakpoint vector. We then define the multivariate knot vector by $ \mathbf{\Xi} =(\Xi_1 \times \Xi_2 \times \ldots \times \Xi_d)$ and the multivariate breakpoint vector by $ \textbf{Z} =(Z_1 \times Z_2 \times \ldots \times Z_d)$.  For simplicity of notation, we are not defining the degree vector but instead we assume in the following that the degree is the same in all parametric directions and denote it by p.

 $ \textbf{Z} $ forms a partition of the parametric domain  $\widehat{\Omega}=(0,1)^d$ and $\widehat{\mathcal{M}}$ defines the set of elements
\begin{align*}
	\widehat{\mathcal{M}}=\{\mathbf{Q}_{\bf j} = \widehat{\tau}_{1, j_1} \times \ldots \times\, \widehat{\tau}_{d, j_d}, ~ \widehat{\tau}_{\delta, j_\delta}=[\zeta_{\delta, j_\delta}, \zeta_{\delta, j_{\delta}+1}] , ~ 1\leq j_\delta \leq E_\delta -1 \}.
\end{align*}

We introduce a set of multi-indices $\mathbf{I} =\{\mathbf{i}=(i_1, \, \ldots, \, i_d): 1\leq i_\delta \leq n_\delta\}$ and define multivariate B-Spline functions for each multi-index $\mathbf{i}$ by tensorization from the univariate B-Spline:
\begin{equation*}
	\widehat{B}_{\mathbf{i}}^p({\boldsymbol \zeta})=\widehat{B}_{i_1}^{p}(\zeta_1) \ldots \widehat{B}_{i_d}^{p}(\zeta_d), \quad \mathbf{i} \in \mathbf{I}.
\end{equation*}
Let us then define the multivariate spline space in the parametric domain by 
\begin{equation*}
	S^p(\mathbf{\Xi})= \otimes_{\delta = 1}^{d} S^p(\Xi_\delta)=\spann\{\widehat{B}_{\mathbf{i}}^p({\boldsymbol \zeta}),  \mathbf{i} \in \mathbf{I}\}.
\end{equation*}

Multivariate NURBS are rational functions of multivariate B-Spline functions. Given a set of positive weights $\{\omega_{\mathbf{i}}, \, \mathbf{i} \in \mathbf{I}\}$, we define the weight function  $\widehat{DW}({\boldsymbol \zeta})=\sum_{ \mathbf{i} \in \mathbf{I}} \omega_{\mathbf{i}} \,\widehat{B}_{\mathbf{i}}^p({\boldsymbol \zeta})$, and then the NURBS functions as 
\begin{equation*}
	\widehat{N}_{\mathbf{i}}^p({\boldsymbol \zeta})=\frac{\omega_{\mathbf{i}} \,\,\widehat{B}_{\mathbf{i}}^p({\boldsymbol \zeta})}{\widehat{DW}({\boldsymbol \zeta})},
\end{equation*}
and in general they are not a tensor product of univariate NURBS functions. Note that B-Splines can be regarded as NURBS with the weights equal to 1, i.e., $\widehat{DW}({\boldsymbol \zeta}) = 1$. Hence whenever there is no ambiguity, we also refer to them as NURBS.

\subsection{Isogeometric parametrization}\label{sec:iga_p3}
NURBS are widely used in the computer aided geometrical design (CAGD), since they are capable to describe various  geometries either exactly (this includes conic sections) or very accurately. Given a set of control points $\mathbf{C}_{\mathbf{i}} \in \mathbb{R}^d$, $\mathbf{i} \in \mathbf{I}$, we can define a parametrization of a NURBS surface ($d=2$) or solid ($d=3$) as a linear combination of NURBS and  control points
\begin{equation*}
	\mathbf{F({\boldsymbol \zeta})}=\displaystyle \sum_{\mathbf{i} \in \mathbf{I}} \mathbf{C_i} \,\widehat{N}_{\mathbf{i}}^p({\boldsymbol \zeta }). 
\end{equation*}
The NURBS geometry is defined as  the image of $\mathbf F$, which is also called geometric mapping, i.e., $\Omega = \mathbf{F}(\widehat \Omega)$.
We define a physical mesh $\mathcal{M}$ as the image  of the parametric mesh $\widehat{\mathcal{M}}$ through $\mathbf{F}$, and denote by $\mathbf{O}$ its elements,
\begin{equation*}
	\mathcal{M}=\{ \mathbf{O} \subset \Omega: \mathbf{O}=\mathbf F(\mathbf{Q}), ~ {\mathbf{Q}} \in \widehat{\mathcal{M}}  \}.
\end{equation*}
Let us assume the following regularity of $\mathbf{F}$.
\begin{assumption}\label{as:regularity_mapping}
The parametrization $\mathbf{F}$ is a bi-Lipschitz homeomorphism. Moreover, $\mathbf{F}_{|\overline{\mathbf{Q}}}$ is in $C^\infty(\overline{\mathbf{Q}})$ for all element of the parametric mesh, and $\mathbf{F}^{-1}_{|\overline{\mathbf{O}}}$ is in $C^\infty(\overline{\mathbf{O}})$ for all element of the physical mesh.
\end{assumption}

Let us define the mesh-size for any parametric element as $h_{\mathbf{Q_j}}=\rm{diam}(\mathbf{Q_j})$ and analogously for any  physical element as $h_{\mathbf{O}_\mathbf{j}}$  and let us note that Assumption~\ref{as:regularity_mapping} ensures that $h_{\mathbf{Q}_\mathbf{j}} \approx h_{\mathbf{O}_\mathbf{j}}$. Thus, no distinction is required and we use the simple notation $h_{\mathbf j}$ for the mesh size. We denote the maximal mesh-size by $h = \max_{\mathbf j} h_{\mathbf j}$.
The mesh size of the underlying univariate partition $\Xi_\delta$, $\delta=1,\,\ldots,\, d$, is denoted by $h_{\delta,j}, \, j=1,\,\ldots,\,E_{\delta}-1$. Let us do the following assumption regarding the mesh uniformity. 
\begin{assumption}\label{as:uniform_mesh}
The partition defined by the breakpoints is globally quasi-uniform, i.e., there exists a constant $\theta$ such that the univariate element size ratio is uniformly bounded: $ h_{\delta, i}/h_{\delta', j} \leq \theta$, with $\delta, \delta'=1,\,\ldots,\, d$ and $i=1,\,\ldots,\,E_{\delta}-1$, $j=1,\,\ldots,\,E_{\delta'}-1$.
\end{assumption}
Note that Assumption~\ref{as:uniform_mesh} excludes the case of anisotropic meshes which are used for, e.g., boundary layers and of graded meshes which are used in case of singularities. However, this assumption is made here only to reduce the technicality of the proofs. We anticipate that a more detailed analysis may show the same results under milder assumptions on the mesh (as the local quasi-uniformity). 

\section{Isogeometric mortar methods }\label{sec:mortar_method}
\label{chapter3}
In this section, we first state the problem and define the geometry setting, then the functional framework and finally the approximation spaces and their required properties to be optimal.

Let $\Omega \subset \R^d$, $d = 2,3$, be a bounded domain, $\alpha, \beta \in L^{\infty}(\Omega)$, $\alpha > \alpha_0 > 0$ and $\beta \geq 0$. We consider the following  second order elliptic boundary value problem with homogeneous Dirichlet conditions
\begin{subequations}\label{eq:strong_formulation}
\begin{align} 
		-\divergence (\alpha\nabla u) + \beta u &= f \quad \text{ in } \Omega,\\
		u &= 0 \quad \text{ on } \partial\Omega_D = \partial\Omega.
\end{align}
\end{subequations}
We assume $\alpha, \beta$ to be sufficiently smooth, but allow jumps in special locations, which are specified later. 
\subsection{Description of the computational domain}
Let a decomposition of the domain $\Omega$ into $K$ non-overlapping domains $\Omega_k$ be given:
\[
\overline{\Omega} = \bigcup_{k=1}^K \overline{\Omega}_k, \text{ and } \Omega_i \cap \Omega_j = \emptyset, i \neq j.
\]
For $1\leq k,\, l \leq K $, $k\neq l$, we define the interface as the interior of the intersection of the boundaries, i.e., $\overline{\gamma}_{kl} = \partial {\Omega}_k \cap \partial {\Omega}_{l}$, where ${\gamma}_{kl}$ is open. Let the non-empty interfaces be enumerated by $\gamma_l$, $l = 1,\,\ldots,\, L$,  and define the skeleton $\Gamma = \bigcup_{l=1}^L \gamma_l$ as the union of all interfaces. For each interface, one of the adjacent subdomains is chosen as the master side and one as the slave side, this choice is arbitrary but fixed. 
We denote the index of the former by $m(l)$, the index of the latter one by ${s(l)}$, and thus $\overline{\gamma}_l=\partial \Omega_{m(l)}\cap \partial \Omega_{s(l)}$. Note that one subdomain can at the same time be classified as a master domain for one interface and as a slave domain for another interface, see Figure~\ref{mortar:mortar_setting}. On the interface $\gamma_l$, we define the outward normal $\bf  n_l$ of the master side $\partial \Omega_{m(l)}$ and denote  by $\displaystyle{\frac{\partial u}{\partial\bf n_l}}$ the normal derivative on $\gamma_l$ from the master side.

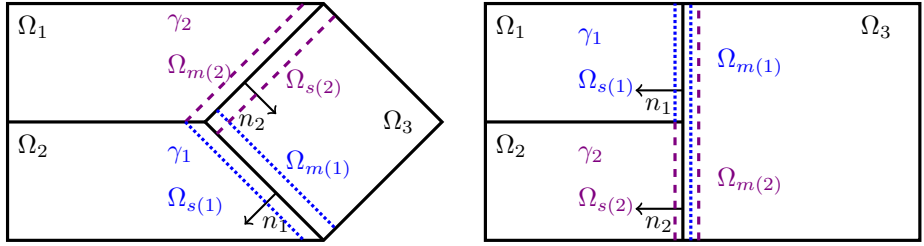
\begin{figure}[htbp!] 
	\begin{tikzpicture}[scale=1.05]

\tikzstyle{style1} = [black, very thick]
\tikzstyle{style2} = [violet, very thick, dashed]
\tikzstyle{style3} = [blue, very thick, dash pattern=on \pgflinewidth off 1pt]

\def\a{5.5};
\def\b{2.5};
\def\c{4};
\def\d{2.5};
\def\h{3};
\def\hh{1.5};
\def\f{0.1};
\def\s{0.1};
\def\ss{0.2};

\coordinate (0) at (0, 0);
\coordinate (1) at (\c, 0);
\coordinate (2) at (\a, \hh);
\coordinate (3) at (\c, \h);
\coordinate (4) at (0, \h);
\coordinate (5) at (0, \hh);
\coordinate (6) at (\b, \hh);

\coordinate (7) at ($(6)!0.1!(5)$);
\coordinate (8) at ($(6)!0.1!(3)$);
\coordinate (9) at ($(6)!0.1!(1)$);

\coordinate (10) at ($(7) + (3) - (6)$);
\coordinate (11) at ($(7) + (1) - (6)$);
\coordinate (12) at ($(8) + (1) - (6)$);
\coordinate (13) at ($(9) + (3) - (6)$);

\def\ln{0.4};
\coordinate(14)at (\c-0.6,0.6);
\coordinate(15)at(\c-0.6-\ln,0.6-\ln);
\draw[thick,->]  (14) -- (15);
\coordinate(18)at(\c-\ln-0.2,0.2);
\node[style1] at ($(18)$) {$n_1$};

\coordinate(16)at (\b+0.5,\hh+0.5);
\coordinate(17)at(\b+0.5+\ln,\hh+0.5-\ln);
\draw[thick,->]  (16) -- (17);
\coordinate(19)at(\b+0.5+0.1,\hh+0.5-\ln-0.1);
\node[style1] at ($(19)$) {$n_2$};

\draw[style1] (0) -- (1) -- (2) -- (3) -- (4) -- cycle;
\draw[style1]  (5) -- (6) -- (3);
\draw[style1]  (6) -- (1);

\draw[style2] (7) -- (10);
\draw[style3] (7) -- (11);
\draw[style3] (8) -- (12);
\draw[style2] (9) -- (13);

\node[right, style1] at ($(0)!0.9!($(0)+(0,\h)$)$) {$\Omega_1$};
\node[right, style1] at ($(0)!0.4!($(0)+(0,\h)$)$) {$\Omega_2$};
\node[style1] at ($(5)!0.9!(2)$) {$\Omega_3$};

\node[style2, text width=1cm] at ($(6) + (0, 0.31*\h)$) {$\gamma_2$\\$\Omega_{m(2)}$};
\node[style3, text width=1cm]  at ($(6) - (0, 0.25*\h)$) {$\gamma_1$\\$\Omega_{s(1)}$};

\node[style3, text width=1cm] at ($(1)+(0, 0.32*\h)$) {$\Omega_{m(1)}$};
\node[style2, text width=1cm] at ($(1)+(0, 0.66*\h)$) {$\Omega_{s(2)}$};

\begin{scope}[xshift=1.1*\a cm]
\coordinate (0) at (0, 0);
\coordinate (1) at (\a, \h);
\coordinate (2) at (\d, 0);
\coordinate (3) at (\d, \h);
\coordinate (4) at (0, \hh);
\coordinate (5) at (\d, \hh);
\coordinate (6) at ($(2) - (\s, 0)$);
\coordinate (7) at ($(2) + (\s, 0)$);
\coordinate (8) at ($(2) + (\ss, 0)$);
\coordinate (9) at ($(5) - (\s, 0)$);
\coordinate (10) at ($(5) - (\s, 0)$);
\coordinate (11) at ($(3) - (\s, 0)$);
\coordinate (12) at ($(3) + (\s, 0)$);
\coordinate (13) at ($(3) + (\ss, 0)$);
\coordinate (14) at ($(0)!0.3!($(0)+(\a, 0)$)$);
\coordinate (15) at ($(0)!0.62!($(0)+(\a, 0)$)$);

\def\lnb{0.6};
\coordinate(16)at (\d,\hh+0.4);
\coordinate(17)at(\d-\lnb,\hh+0.4);
\coordinate(18)at(\d-\lnb/2,\hh+0.2);
\draw[thick,->]  (16) -- (17);
\node[style1] at ($(18)$) {$n_1$};

\coordinate(19)at (\d,0.4);
\coordinate(20)at(\d-\lnb,0.4);
\coordinate(21)at(\d-\lnb/2,0.2);
\draw[thick,->]  (19) -- (20);
\node[style1] at ($(21)$) {$n_2$};

\draw[style1]  (0) rectangle (1);
\draw[style1]  (2) -- (3);
\draw[style1]  (4) -- (5);
\draw[style2] (6) -- (9);
\draw[style3] (10) -- (11);
\draw[style3] (7) -- (12);
\draw[style2] (8) -- (13);

\node[right, style1] at ($(0)!0.9!($(0)+(0,\h)$)$) {$\Omega_1$};
\node[right, style1] at ($(0)!0.4!($(0)+(0,\h)$)$) {$\Omega_2$};
\node[style1] at ($(0)!0.9!(1)$) {$\Omega_3$};

\node[style3, text width=1cm, ] at ($(14)+(0, 0.75*\h)$) {$\gamma_1$\\$\Omega_{s(1)}$};
\node[style2, text width=1cm, ] at ($(14)+(0, 0.25*\h)$) {$\gamma_2$\\$\Omega_{s(2)}$};

\node[style3, text width=1cm, ] at ($(15)+(0, 0.75*\h)$) {$\Omega_{m(1)}$};
\node[style2, text width=1cm, ] at ($(15)+(0, 0.25*\h)$) {$\Omega_{m(2)}$};

\end{scope}

\end{tikzpicture}
	\caption{Geometrical conforming case (left) and  slave conforming case (right).}
	\label{mortar:mortar_setting} 
\end{figure}

Each subdomain $\Omega_k$ is given as the image of the parametric space $\widehat \Omega = (0,1)^d$ by one single NURBS parametrization $\mathbf{F}_k: \widehat{\Omega} \rightarrow \Omega_k$, see Section~\ref{sec:iga_p3}, which satisfies the Assumption~\ref{as:regularity_mapping}. The $h$-refinement procedure, see Sections~\ref{sec:iga_p2} and \ref{sec:iga_p3}, yields a family of meshes denoted $\mathcal{M}_{k,h}$, each mesh being a refinement of the initial one, where we require Assumption~\ref{as:uniform_mesh}. Under  these assumptions, the family of meshes is shape regular.

We furthermore assume that for each interface, the pull-back with respect to the slave domain is a whole face of the unit $d$-cube in the parametric space. Under these assumptions, we are not necessarily in a geometrically conforming situation, but we call it a \emph{slave conforming} situation, see the right setting in Figure~\ref{mortar:mortar_setting}.
If we also assume that the pull-back with respect to the master domain is a whole face of the unit $d$-cube, we are in a fully geometrically conforming situation, see the left picture of Figure~\ref{mortar:mortar_setting}.

\subsection{The variational problem}
In the following, we recall main functional analysis properties to introduce our abstract framework and then set the variational problem. 

We use standard Lebesgue and Sobolev spaces on a bounded Lipschitz domain $D\subset \mathbb{R}^{d-1}$ or $D\subset\mathbb{R}^{d}$.  $L^2(D)$ denotes the Lebesgue space of square integrable functions, endowed with the norm
$\|f\|_{L^2(D)}=(\int_{D} \left| f \right|^2 dx)^{1/2}$. 
For $l \in \mathbb{N}$,  $H^l(D)$ denotes the Sobolev space of functions $f \in L^2(D)$ such that their weak derivatives up to the order $l$ are also in $L^2(D)$. 
 For fractional indices $s > 0$, $H^s(D)$ denotes the fractional Sobolev spaces as defined in~\cite{grisvard:11}. Let us mention that $H^{1/2}(\partial D)$ is the trace space of $H^1(D)$.
 
 The Sobolev space of order one with vanishing trace is $H_0^1(D) = \{v\in H^1(D), tr(v) = 0\}$. 
Working on subsets of the boundary  $\gamma \subset \partial D$, special care has to be taken about the values on the boundary  of $\gamma$. We define by $H^{1/2}_{00}(\gamma) \subset H^{1/2}(\gamma)$  the space of all functions that can be trivially extended on $\partial D \setminus \gamma$ by zero to an element of $H^{1/2}(\partial D)$. The dual space of $H^{1/2}_{00}(\gamma)$ is denoted $H^{-1/2}(\gamma)$. Note that on closed surfaces, i.e., $\gamma = \partial D$, it holds $H^{1/2}(\gamma) =  H^{1/2}_{00}(\gamma)$. Furthermore, in the following we omit the trace operator, whenever there is no ambiguity.

 For each $\Omega_k$, we introduce the space $H^1_*(\Omega_k)=\{ v_k \in H^1(\Omega_k),  v_{k |_{\partial \Omega \cap \partial \Omega_k}}=0\}$. And in order to set a global functional framework on $\Omega$, we consider the broken Sobolev spaces $V= \Pi_{k=1}^K H^1_*(\Omega_k)$, endowed with the broken norm $ \| v \|_{V}^2 = \sum_{k=1}^K \| v \|_{H^1(\Omega_k)}^2$, and $M= \Pi_{l=1}^L H^{-1/2}(\gamma_l)$.

The standard weak formulation of~(\ref{eq:strong_formulation}) reads as follows: Find $u \in H_0^1(\Omega)$ such that
\begin{equation}\label{eq:standard_weak_formulation}
	\int_\Omega \alpha \nabla u \cdot \nabla v + \beta \, u\, v \, \mathrm{d}x = \int_\Omega f \,v \, \mathrm{d}x, \quad   v \in H_0^1(\Omega).
\end{equation}
It is well-known that under the assumptions on $\alpha$ and $\beta$, the variational problem~(\ref{eq:standard_weak_formulation}) is uniquely solvable.

From now on, we assume that jumps of $\alpha$ and $\beta$ are solely located at the skeleton, and we define the  linear and bilinear forms $a\colon V \times V \rightarrow \mathbb{R}$ and  
 $f\colon V \rightarrow \mathbb{R}$, such that  
 \[a(u,v) = \sum_{k=1}^K \int_{\Omega_k} 
\alpha \nabla u \cdot  \nabla v + c\, u\, v ~ \mathrm{d}x, \quad
f(v) =\sum_{k=1}^K \int_{\Omega_k} f v ~ \mathrm{d}x.
\]

\subsection{Isogeometric mortar discretization}
In the following, we set our approximations spaces.
Let us introduce $V_{k,h}$ the approximation space on $\Omega_k$, by $V_{k,h}=\{v_k=\widehat{v}_k \circ \mathbf{F}_k^{-1}, \widehat{v}_k \in N^{p_k}(\mathbf{\Xi}_k) \}$ defined on the knot vector $\mathbf{\Xi}_k$ of degree $p_k$. Denote $h_k$ the mesh size of $V_{k,h}$ but note that we use the maximal mesh size $h = \max_k h_k$ as the mesh parameter. We recall that NURBS spaces are known to have optimal approximation properties as stated in the following lemma, see, e.g,~\cite{bazilevs:06,Schumaker:07,sangalli:12}.
\begin{lemma}\label{lem:Spline_approx}
Given a quasi-uniform mesh and let $r, s$ be such that $0\leq r \leq s \leq  p_k+1$.
Then, there exists a constant C depending only on $ p_k$, $\theta_k$, $\mathbf{F}_k$ and $\widehat{DW}_k $, such that 
for any $v  \in H^s(\Omega_k)$  there exists an approximation $v_h \in   N^{p_k}(\mathbf{\Xi}_k)$,  such that 
		\begin{equation*}
		 \|v -v_h\|_{H^r{(\Omega_k)}}\leq C h^{s-r} \| v \|_{H^s(\Omega_k)} .
	\end{equation*}
\end{lemma}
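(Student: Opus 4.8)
The plan is to reduce the statement to a purely parametric spline‑approximation estimate and then transport it back through the geometry map. First I would pull everything back to $\widehat\Omega=(0,1)^d$: writing $\widehat v=v\circ\mathbf F_k$, Assumption~\ref{as:regularity_mapping} guarantees that $\|v\|_{H^t(\Omega_k)}$ and $\|\widehat v\|_{H^t(\widehat\Omega)}$ are equivalent for each relevant order $t$, with constants depending only on the $C^\infty$‑norms of $\mathbf F_k$ and $\mathbf F_k^{-1}$ on the reference elements (for integer $t$ this is the chain rule plus an element‑by‑element change of variables; the piecewise‑smooth, globally bi‑Lipschitz structure then yields the fractional cases). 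Likewise, since $\widehat{DW}_k$ is a fixed strictly positive $C^\infty$ function on $\overline{\widehat\Omega}$, multiplication and division by $\widehat{DW}_k$ are bounded isomorphisms of $H^t(\widehat\Omega)$ for the orders in question. Because $N^{p_k}(\mathbf\Xi_k)=\tfrac{1}{\widehat{DW}_k}S^{p_k}(\mathbf\Xi_k)$, it therefore suffices to produce, for $g=\widehat{DW}_k\,\widehat v\in H^s(\widehat\Omega)$, a spline $g_h\in S^{p_k}(\mathbf\Xi_k)$ with $\|g-g_h\|_{H^r(\widehat\Omega)}\le C h^{s-r}\|g\|_{H^s(\widehat\Omega)}$; setting $v_h=(g_h/\widehat{DW}_k)\circ\mathbf F_k^{-1}$ then finishes the argument.

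For the parametric spline estimate I would use a quasi‑interpolation operator. In one variable, take a Schumaker‑type operator $\Pi_\delta\colon L^1(0,1)\to S^{p_k}(\Xi_{k,\delta})$ built from a dual basis to the B‑splines: it is linear, reproduces $S^{p_k}$ (in particular all polynomials of degree $\le p_k$), it is local in the sense that $(\Pi_\delta g)|_{\tau}$ depends only on $g$ on a union $\widetilde\tau$ of at most $\mathcal O(p_k)$ consecutive elements around $\tau$, and it is $L^2$‑stable with the natural scaling. Tensorising, $\widehat\Pi=\Pi_1\otimes\cdots\otimes\Pi_d$ maps into $S^{p_k}(\mathbf\Xi_k)$, reproduces it, and is local and $L^2$‑stable. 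On each parametric element $\mathbf Q$ a Bramble--Hilbert / Dupont--Scott argument, carried out on the reference cube after affine rescaling, gives for integer pairs $0\le r\le s\le p_k+1$ the local bound $\|g-\widehat\Pi g\|_{H^r(\mathbf Q)}\le C h^{s-r}_{\mathbf Q}\|g\|_{H^s(\widetilde{\mathbf Q})}$; Assumption~\ref{as:uniform_mesh} with constant $\theta_k$ is what makes $h_{\widetilde{\mathbf Q}}\approx h_{\mathbf Q}\approx h$ and keeps the rescaling constants uniform. Squaring and summing over $\mathbf Q$, and using that each point of $\widehat\Omega$ lies in at most $\mathcal O(p_k^{\,d})$ of the extended patches $\widetilde{\mathbf Q}$, yields the global estimate for all integer $(r,s)$ with $0\le r\le s\le p_k+1$ (for $r\ge 1$ one uses that the standing regularity of the knot vectors gives $S^{p_k}\subset H^r$; for $r=s$ it is just $H^r$‑stability of $\widehat\Pi$).

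The remaining fractional cases I would obtain by interpolation space theory: $\mathrm{id}-\widehat\Pi$ is bounded $H^{s}(\widehat\Omega)\to H^{r}(\widehat\Omega)$ with norm $\le C h^{s-r}$ at the integer endpoints, so the real $K$‑method applied between two such estimates — varying $s$ with $r$ fixed, then varying $r$ with $s$ fixed, and using that fractional Sobolev spaces are the interpolation spaces between the integer ones — gives the bound for all real $0\le r\le s\le p_k+1$, with the power of $h$ depending affinely on the interpolation parameter. Combining this with the two reductions of the first paragraph yields the claim with $C=C(p_k,\theta_k,\mathbf F_k,\widehat{DW}_k)$.

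The step I expect to require the most care is keeping all constants $h$‑independent while passing between the three pictures — reference cube, parametric domain, physical patch — simultaneously for integer \emph{and} fractional orders: the parametric estimates are naturally local and element‑wise, whereas fractional Sobolev norms are nonlocal, so the clean route is to finish the integer‑order global estimates first and only then interpolate, and to absorb the merely piecewise‑smooth geometry map and the NURBS weight into the constant through their bounded norms on the finitely many reference elements via Assumption~\ref{as:regularity_mapping}. A secondary point to treat honestly is the conformity requirement $S^{p_k}\subset H^r$ for $r\ge 1$, which constrains the admissible knot multiplicities and is implicit in the hypotheses.
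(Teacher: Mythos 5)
The paper never proves this lemma: it is explicitly recalled from the literature (``see, e.g.,''~\cite{bazilevs:06,Schumaker:07,sangalli:12}), so there is no in-paper argument to compare against line by line. Your proposal reconstructs what is essentially the standard proof in those references: factor out the NURBS weight using $N^{p_k}(\mathbf{\Xi}_k)=S^{p_k}(\mathbf{\Xi}_k)/\widehat{DW}_k$, pull back through $\mathbf F_k$, approximate in the parametric spline space by a local, $L^2$-stable, spline-reproducing quasi-interpolant with element-wise Bramble--Hilbert estimates (quasi-uniformity, Assumption~\ref{as:uniform_mesh}, keeping the constants uniform), and recover fractional orders afterwards by interpolation of the integer-order operator bounds. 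That skeleton is correct and is the intended route.

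The step you should not wave through is the opening reduction: the claimed global equivalence $\|v\|_{H^t(\Omega_k)}\approx\|v\circ\mathbf F_k\|_{H^t(\widehat\Omega)}$, and likewise the claim that multiplication by $\widehat{DW}_k$ is an isomorphism of $H^t(\widehat\Omega)$, ``for each relevant order $t$''. Assumption~\ref{as:regularity_mapping} makes $\mathbf F_k$ bi-Lipschitz and only \emph{piecewise} $C^\infty$; across knot lines both $\mathbf F_k$ and $\widehat{DW}_k$ have merely the spline regularity $C^{p-m_j}$, which can be as low as $C^0$. Hence for $t\ge 2$ the pullback of an $H^t(\Omega_k)$ function need not belong to $H^t(\widehat\Omega)$ globally, and the asserted equivalence fails in general; it holds only element by element. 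This is precisely why the cited proofs work with element-wise (``bent Sobolev'') norms: the quasi-interpolant's local estimates on each parametric element are transported to the corresponding physical element, where the map is smooth, and the global bound is assembled only at the level of the final error estimate rather than through a global norm equivalence of the pullback. Your own local machinery (local quasi-interpolant, overlap counting, interpolation only after the global integer-order estimates) already contains what is needed for this repair, but as written the first paragraph claims more than Assumption~\ref{as:regularity_mapping} delivers, and the fractional-order transport between physical and parametric domains should likewise be routed through the element-wise estimates rather than through a global equivalence.
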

 
 On $\Omega$, we define the product space $V_h = \Pi_{k=1}^K V_{k,h} \subset V$, which forms a $H^1(\Omega)$-non-conforming space which is discontinuous over the interfaces.

The mortar method is based on a weak enforcement of continuity across the interfaces $\gamma_l$ in broken Sobolev spaces. Let a space of discrete Lagrange multipliers $M_{l,h}\subset L^2(\gamma_l)$ on each interface $\gamma_l$ built on the slave mesh be given. On the skeleton $\Gamma$, we define the discrete product Lagrange multiplier space $M_h$ as $M_h = \Pi_{l=1}^L M_{l,h}$. Choices of different spaces will be discussed in the next section.
Furthermore, we define the discrete trace space with additional zero boundary conditions by $W_{l,h} = \{ v_{|\gamma_l}, v \in V_{{s(l)},h}\} \cap H^1_{0}(\gamma_l)$.

One possibility for a mortar method is to specify the discrete weak formulation as a saddle point problem:
Find $ (u_h, \lambda_h) \in V_h \times M_h,$  such that 
\begin{subequations}\label{eq:discrete_spp}
\begin{align}
		a(u_h, v_h)+ b(v_h, \lambda_h) &= f(v_h), \quad   v_h \in V_h,\\
		b(u_h, \mu_h) &= 0, \quad  \mu_h \in M_h,
\end{align}
\end{subequations}
where $b(v,\mu) = \sum_{l=1}^L \int_{\gamma_l} \mu [v]_l ~\mathrm{d}\sigma$ and  $[\cdot]_l$ denotes the jump from the master to the slave side over $\gamma_l$.

We note, that the Lagrange multiplier $\lambda_h$ gives an approximation of the normal flux across the skeleton.

It is well known from the theory of mixed and mortar methods, that the following  abstract requirements guarantee the method to be well-posed and  of optimal order, see~\cite{ben_belgacem:99, brezzi:13}. In the following, we will denote by $0 < C < \infty$ a generic constant that is independent of the mesh sizes but possibly depends on $p_k$.

The first assumption is a uniform inf-sup stability for the discrete trace spaces. Although the primal variable of the saddle point problem  is in a broken $H^1$ space, the inf-sup stability can be formulated as a $L^2$ stability over each interface. This implies the $H^{1/2}_{00}-H^{-1/2}$ stability, which can be used in the geometrically conforming situation for $d=2$ and in weighted $L^2$ norms, which can be used for the other cases, see~\cite{braess:99}.
\begin{assumption}\label{as:mortar_intro:inf_sup} For $l=1,\ldots,L$ and any $\mu_l \in M_{l,h}$ it holds
\begin{align*} 
	\sup_{w_l \in W_{l,h}} \frac{ \int_{\gamma_l} w_l \, \mu_l ~\mathrm{d}\sigma}{\|w_l \|_{L^2(\gamma_l)}} \geq C \| \mu_l \|_{L^2(\gamma_l)}.
\end{align*}
\end{assumption}

The second assumption is the approximation order of the dual space. Since for the dual space weaker norms are used, the approximation order of $M_{l,h}$ with respect to the $L^2$ norm can be smaller than the one of $W_{l,h}$.
\begin{assumption}\label{as:mortar_intro:approximation_order} For $l=1,\,\ldots,\,L$  there exists a fixed  $\eta(l)$, such that for any $\lambda \in H^{\eta(l)}(\gamma_l)$ it holds
	\begin{align*}
		\inf_{\mu_l \in M_{l,h}} \| \lambda - \mu_l \|_{L^2(\gamma_l)} \leq C h^{\eta(l)} \|\lambda\|_{H^{\eta(l)}(\gamma_l)}.  
	\end{align*}
\end{assumption}

We now give the following a-priori estimates in the broken $V$ and $M$ norms, which can be shown by standard techniques, see~\cite{bernardi:93, ben_belgacem:97}. 
\begin{theorem} \label{thm:convergence_rates}
Given Assumptions~\ref{as:mortar_intro:inf_sup} and~\ref{as:mortar_intro:approximation_order}, the following convergence is given for the primal solution of~(\ref{eq:discrete_spp}). For $u\in H^{\sigma+1}(\Omega)$, $1/2<\sigma \leq \min_{k,l} (p_k,\eta(l)+1/2)$ it holds
\[
	\frac{1}{h^2} \|u-u_h\|_{L^2(\Omega)}^2 + \| u - u_h \|_{V}^2  \leq C \sum_{k=1}^K h^{2\sigma}_k \|u \|^2_{H^{\sigma+1}(\Omega_k)}.
\]
We can also give an estimate for the dual solution which approximates the normal flux:
\[
\sum_{l=1}^L \| \alpha \frac{\partial u}{\partial \mathbf{n}_l} - \lambda_h \|_{H^{-1/2}(\gamma_l)}^2   \leq C \sum_{k=1}^K h^{2\sigma}_k \|u \|_{H^{\sigma+1}(\Omega_k)}^2.
\]
\end{theorem}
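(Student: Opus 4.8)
The plan is to follow the now-classical mortar error analysis (à la Bernardi–Maday–Patera / Ben Belgacem), adapted to the broken isogeometric setting, since all the structural ingredients — an inf-sup stable pair $(V_h,M_h)$, a NURBS approximation estimate (Lemma~\ref{lem:Spline_approx}), and the coercivity of $a(\cdot,\cdot)$ on the broken space $V$ — are already in place. First I would establish the well-posedness of the saddle-point problem~(\ref{eq:discrete_spp}): coercivity of $a$ on the kernel $\{v_h\in V_h: b(v_h,\mu_h)=0~\forall\mu_h\in M_h\}$ follows from a broken Poincaré/Friedrichs inequality on $V$ (valid because of the homogeneous Dirichlet condition on $\partial\Omega$ and the weak continuity enforced by the constraint), and the inf-sup condition for $b$ on $V_h\times M_h$ follows from Assumption~\ref{as:mortar_intro:inf_sup} by a standard localization over interfaces together with a lifting of each $w_l\in W_{l,h}$ into $V_{s(l),h}$ with control of the $H^1(\Omega_{s(l)})$ norm by $\|w_l\|_{L^2(\gamma_l)}$, up to the usual $h^{-1/2}$ trace scaling. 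This gives existence, uniqueness, and stability of $(u_h,\lambda_h)$.

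Next I would derive the primal energy estimate. Writing $\lambda = \alpha\,\partial u/\partial\mathbf n_l$ on each $\gamma_l$ (the physical normal flux, which is well defined since $u$ solves~(\ref{eq:strong_formulation})), the exact solution satisfies the consistency identity $a(u,v_h)+b(v_h,\lambda)=f(v_h)$ for all $v_h\in V_h$, because $[u]_l=0$ and integration by parts on each $\Omega_k$ produces exactly the flux terms collected in $b$. Subtracting the first equation of~(\ref{eq:discrete_spp}) yields the Galerkin orthogonality $a(u-u_h,v_h) = b(v_h,\lambda_h-\lambda)$. I would then use the second-Strang-type lemma: for arbitrary $v_h\in V_h$ and $\mu_h\in M_h$,
\[
\|u-u_h\|_V \;\lesssim\; \|u-v_h\|_V \;+\; \sup_{w_h\in V_h}\frac{b(w_h,\lambda-\mu_h)}{\|w_h\|_V},
\]
where the consistency term is bounded, interface by interface, by $\|\lambda-\mu_h\|_{H^{-1/2}(\gamma_l)}$ times a trace-continuity constant for the jump $[w_h]_l$. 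The approximation error $\|u-v_h\|_V$ is controlled by Lemma~\ref{lem:Spline_approx} applied on each patch, giving $h^\sigma_k\|u\|_{H^{\sigma+1}(\Omega_k)}$; the dual consistency term is controlled by a duality/Aubin–Nitsche argument on $\gamma_l$ turning the $H^{-1/2}$ norm of $\lambda-\mu_h$ into $h^{1/2}$ times its $L^2$ norm, and then Assumption~\ref{as:mortar_intro:approximation_order} finishes it — here the restriction $\sigma\le\eta(l)+1/2$ enters, since the flux $\lambda$ has one less degree of regularity than $u$ and lives in $H^{\eta(l)}(\gamma_l)$. The $L^2(\Omega)$ estimate then comes from a standard Aubin–Nitsche duality argument for the mortar primal problem, gaining one power of $h$ and accounting for the additional consistency error of the dual problem; this is where a bit of care with the non-conforming dual problem and elliptic regularity on $\Omega$ is needed.

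For the flux estimate, I would start from the Galerkin orthogonality rewritten as $b(v_h,\lambda-\lambda_h) = a(u_h-u,v_h)$ for all $v_h\in V_h$, use the inf-sup stability of $b$ (Assumption~\ref{as:mortar_intro:inf_sup}, plus the lifting described above) to get
\[
\|\lambda_h-\Pi_h\lambda\|_{H^{-1/2}(\Gamma)} \;\lesssim\; \sup_{v_h\in V_h}\frac{b(v_h,\lambda_h-\Pi_h\lambda)}{\|v_h\|_V}
\;\lesssim\; \|u-u_h\|_V + \|\lambda-\Pi_h\lambda\|_{H^{-1/2}(\Gamma)},
\]
with $\Pi_h$ a suitable $L^2$-type projection onto $M_h$; a triangle inequality, the already-proven primal bound, and Assumption~\ref{as:mortar_intro:approximation_order} (again through the $H^{-1/2}$–$L^2$ duality trade) close the argument. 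The main obstacle I anticipate is not any single deep step but the bookkeeping of the norms on the interfaces: correctly passing between $L^2(\gamma_l)$, $H^{1/2}_{00}(\gamma_l)$ and its dual $H^{-1/2}(\gamma_l)$ under the $h$-dependent trace scalings, in the merely \emph{slave-conforming} (geometrically non-conforming) situation where the master trace mesh does not match $\gamma_l$, and making sure the hidden constants depend only on $p_k$, $\theta_k$, $\mathbf F_k$ and $\widehat{DW}_k$ as claimed — this is exactly the point the excerpt flags by citing~\cite{braess:99} for the weighted-$L^2$ reformulation in the non-2D-conforming cases.
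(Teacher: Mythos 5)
Your proposal follows exactly the classical mortar error analysis (discrete inf-sup and kernel coercivity for well-posedness, a Strang/Brezzi-type bound with the consistency term measured in $H^{-1/2}(\gamma_l)$, Aubin--Nitsche duality for the $L^2$ bound, and the inf-sup plus triangle-inequality argument for the multiplier), which is precisely the route the paper itself takes, since it gives no proof of Theorem~\ref{thm:convergence_rates} beyond invoking ``standard techniques'' and citing \cite{bernardi:93, ben_belgacem:97}. So the attempt is correct in approach and essentially identical to the paper's (delegated) argument, with the interface-norm bookkeeping you flag being exactly the technical point the paper addresses by citing \cite{braess:99}.
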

In the geometrically non-conforming case, as well as for $d=3$, the ratio of the mesh sizes on the master and the slave side enters in the a-priori estimate, see~\cite{lamichhane:06}. But due to our global quasi-uniformity assumption, see Assumption~\ref{as:uniform_mesh}, this ratio does not play a role. 

We note that if $\eta(l)= p_{s(l)}-1/2$ can be chosen, optimality of the mortar method holds. Moreover, the dual estimate could still be improved under additional regularity assumptions, see~\cite{wohlmuth:12}.

\section{Possible choices of Lagrange multiplier spaces}
\label{chapter4}
For a given interface $\gamma_l$, we aim at providing multiplier spaces that satisfy the inf-sup stability of Assumption~\ref{as:mortar_intro:inf_sup}. In our setting, i.e., a geometrically slave conforming situation, see Figure~\ref{mortar:mortar_setting}, $\gamma_l$ is a whole face of $\Omega_{s(l)}$, which is defined as $\mathbf{F}_{s(l)}(\widehat{\Omega})$ and without loss of generality we suppose that $\gamma_l=\mathbf{F}_{s(l)}(\widehat{\gamma}\times\{0\})$, $\widehat{\gamma}=(0,1)^{d-1}$. As  we consider each interface $\gamma_l$ separately, to shorten the notations we will omit the index $l$ in the following.  

Given a Lagrange multiplier space on the parametric space $\widehat M$, we set the Lagrange multiplier space $M=\{\mu = \widehat{\mu} \circ \mathbf{F}_s^{-1}, \widehat{\mu} \in \widehat M \}$. 
By change of variable, the integral in Assumption~\ref{as:mortar_intro:inf_sup} can be transformed into a weighted integral on the parametric space. Denoting $\widehat w=  (w\circ\mathbf{F}_s)\,\widehat{DW} \in S^p(\widehat \gamma)$ for $w\in W$ and $\widehat \mu = \mu \circ \mathbf{F}_s\in \widehat M$ for $\mu \in M$, the integral becomes
\begin{align}\label{change_of_variable}
\int_{\gamma} w \, \mu \, \mathrm{d}\sigma &=\int_{\widehat{\gamma}} (w\circ \mathbf{F}_{s}) \, (\mu \circ \mathbf{F}_s) \det(\nabla_{\hat \gamma}\mathbf{F}_s) \, \mathrm{d}x\notag\\&= \int_{\widehat{\gamma}} \widehat{w} \, \widehat{\mu} ~ (\widehat{DW})^{-1}\, \det(\nabla_{\hat \gamma}\mathbf{F}_s)\, \mathrm{d}x,
\end{align}
where $\nabla_{\hat \gamma}$ denotes the surface gradient on $\widehat \gamma$.
Due to the Assumption~\ref{as:regularity_mapping} and the uniform positivity of NURBS weights, we can firstly concentrate on the following problem. 
Given $\widehat{\gamma}=(0,1)^{d-1}$, a degree $p$ and knot vectors $\Xi_\delta$ with $\delta=1,\dots,d-1$, we denote by $S^p(\widehat{\gamma})$ the corresponding spline space and $S^p_0(\widehat{\gamma})=S^p(\widehat{\gamma})\cap H^1_0(\widehat{\gamma})$, and study the following inf-sup stability
\begin{equation}\label{eq:para_inf_sup}
\sup_{\widehat{w} \in S^p_0(\hat \gamma)} \frac{ \int_{\widehat{\gamma}} \widehat{w} \, \widehat{\mu} ~\mathrm{d}x}{\|\widehat{w}  \|_{L^2(\hat{\gamma})}} \geq C \| \widehat{\mu}  \|_{L^2(\hat{\gamma})}.
\end{equation}
for any $\widehat \mu \in \widehat M$ for three choices of Lagrange multipliers space $\widehat{M}$. Then, in the case (\ref{eq:para_inf_sup}) is satisfied, we show that the desired inf-sup stability, i.e., Assumption~\ref{as:mortar_intro:inf_sup}, is satisfied. 

In the following remark we briefly discuss the construction of a dual biorthogonal basis with functions having the same support as the primal basis functions. Due to possible difficulties concerning the approximation order, this approach is not considered in the following of this article.
\begin{remark}
By a local orthogonalization procedure, a biorthogonal Lagrange multiplier basis $\{\psi_i\}_{i=1}^{n}$ fulfilling $\operatorname{supp} \widehat{B}_i^p = \operatorname{supp} \psi_i$ and
\[
\int_{\widehat{\gamma}} \widehat{B}_i^p({x}) \psi_j({x}) \mathrm{d}x = c_i \delta_{ij},
\]
for a suitable scaling $c_i$, can be constructed. In Figure~\ref{fig:dual_basis} a primal quadratic basis function and its corresponding biorthogonal basis are depicted.
 
This yields computational advantages, since the coupling degree of freedom can be locally eliminated. However, in the higher order finite element case, it was shown that the construction of a biorthogonal basis with the desired approximation results is not a trivial task, see~\cite{lamichhane:07}.
\begin{figure} 
\begin{center}
	\includegraphics[width=0.8\textwidth]{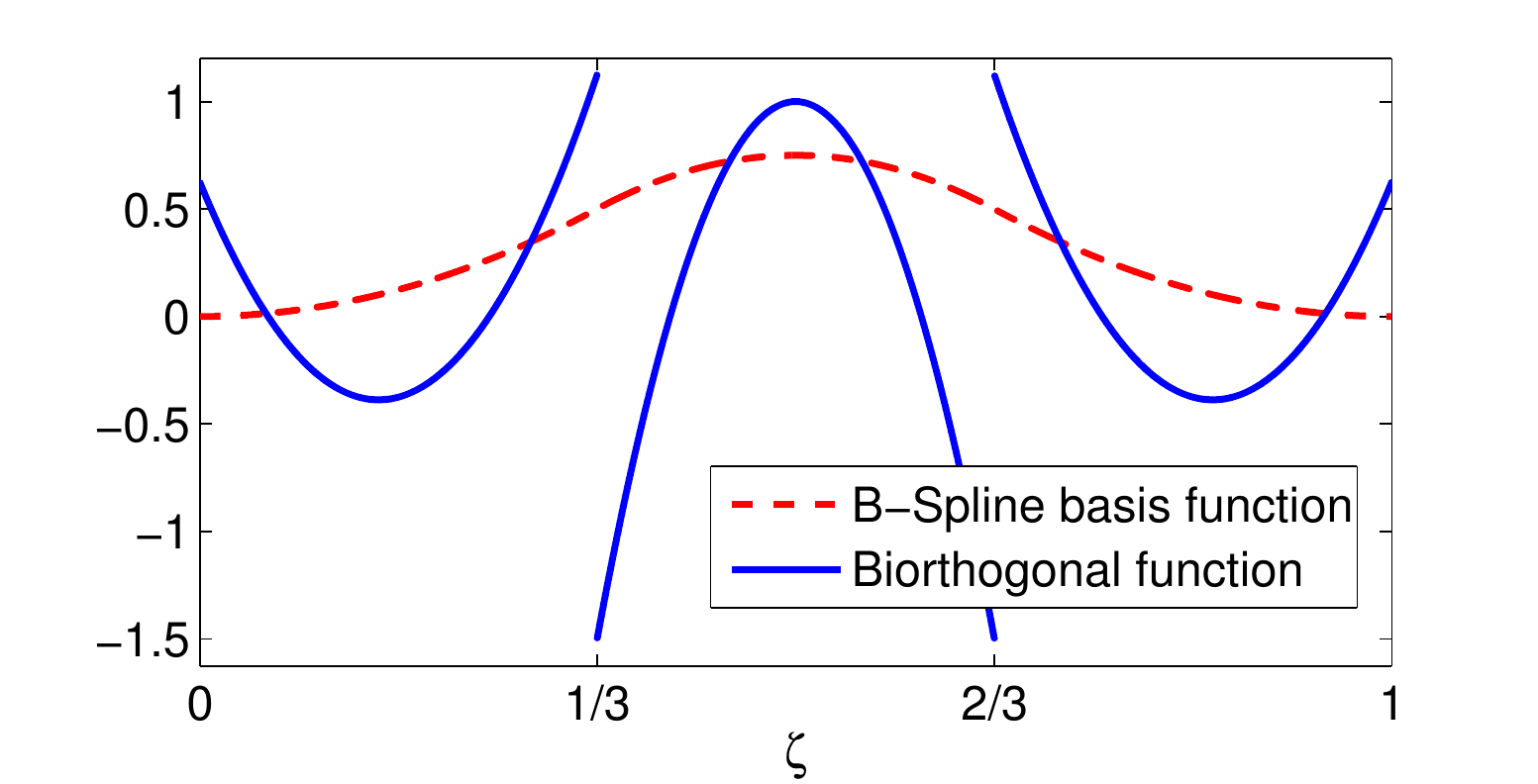}
\end{center}
\caption{A quadratic basis function and its corresponding dual basis on a uniform mesh. The quadratic function corresponds to the local knots $0, 1/3, 2/3, 1$.} 
\label{fig:dual_basis}
\end{figure}
\end{remark}

In the following, we give the details of this inf-sup study, and then we conclude the underlying approximation properties of these isogeometric mortar methods.

\subsection{Choice 1: unstable pairing  $p/p-1$}\label{sec:p_pm1}
Theorem~\ref{thm:convergence_rates} states that order $p=\min_k p_k$ a priori bounds can only be obtained if $\eta(l)$ can be set equal to $p-1/2$. This observation motivates our choice to use a spline space of order $p-1$ as dual space. Then $\eta(l)$ in Assumption~\ref{as:mortar_intro:approximation_order} can be set to $p$ and provided that the uniform inf-sup stability, Assumption~\ref{as:mortar_intro:inf_sup}, can be proved, a convergence rate equal to $p$ might be reached.

Denote by $\widehat{M}^{1} = \spann_{i=1,\,\ldots,\, n^{(1)}}\,\{\widehat{B}_{i}^{p-1} \}$  the spline space of order $p-1$ built on the knot vector(s) $\Xi_\delta'$ with $\delta=1,\dots,d-1$ obtained from the restriction of $\mathbf{\Xi}$ to the corresponding direction(s) removing in the underlying univariate knot vector the first and the last knots. The superscript $1$ refers to the degree difference between the primal and the dual space.

As we will see this choice unfortunately lacks the uniform inf-sup condition~(\ref{eq:para_inf_sup}) and thus also Assumption~\ref{as:mortar_intro:inf_sup}. 
Indeed, a checkerboard mode which yields an $h$-dependent inf-sup constant can be constructed.

\begin{figure} 
\begin{center}
	\includegraphics[width=\textwidth]{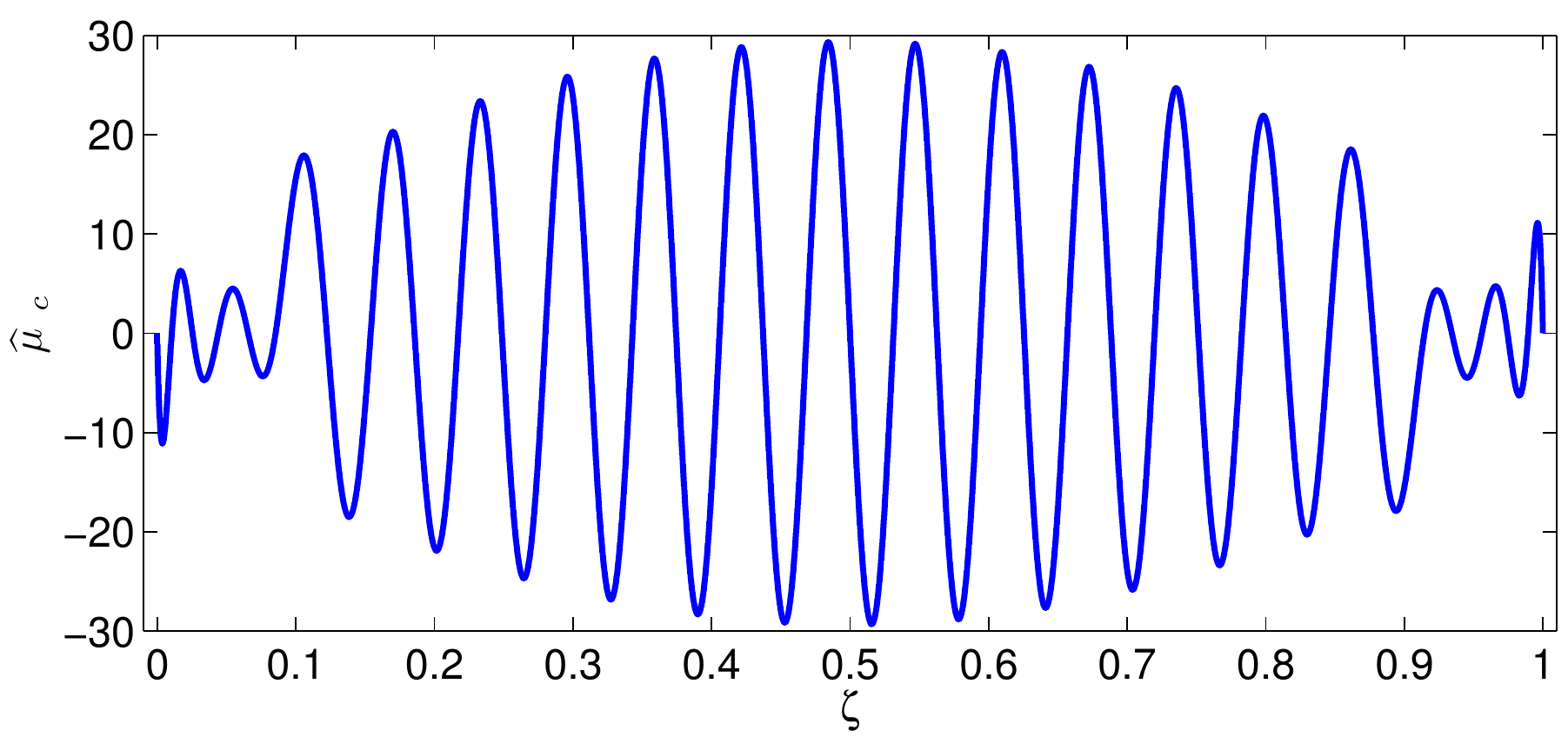}
\end{center}
\caption{Checkerboard mode for $d=2$ and $p=6$.} 
\label{fig:checkerboard_plot}
\end{figure}
Let us consider B-Splines on a uniform knot vector $\Xi = \{0,\,\ldots,\, 0,\, h,\, 2h,\, \ldots,\,\allowbreak 1,\, \ldots,\, \allowbreak 1\}$ for $h = 2^{-j}$, where $j$ is the number of uniform refinements.
Let us now construct a multiplier $\widehat{\mu}_c \in \widehat{M}^{1}$, which yields an $h$-dependent inf-sup constant.
The choice \[ \widehat{\mu}_c = \sum_{i=1}^{n^{(1)}} \widehat{\mu}_i \widehat{B}_i^{p-1}, \quad \widehat{\mu}_i = (-1)^i (i-1) (n^{(1)}-i),\] is shown in Figure~\ref{fig:checkerboard_plot}.
 For the bivariate case, a tensor product using $\widehat{\mu}_c$ in each direction is chosen. The numerical stability constants were computed by a direct evaluation of the supremum
 \[
 \sup_{\widehat w \in S^p(\hat\gamma)} \frac{\int_{\widehat \gamma} \widehat w \, \widehat{\mu}_c \mathrm{d}x}{\|\widehat w\|_{L^2(\hat \gamma)}},
 \]
 and dividing the result by $\| \widehat{\mu}_c\|_{L^2(\hat \gamma)}$.
The results are  shown in Figure~\ref{fig:checkerboard_constant} for $d=2$ and $d=3$, where an $h$-dependency of order $\mathcal{O}(h^{d-1})$ can be observed.  Note that on the same mesh, the stability constant is larger for higher degrees, but the asymptotic rate of the $h$-dependency is the same.

\begin{figure}
\begin{center}
\includegraphics[width=0.49\textwidth]{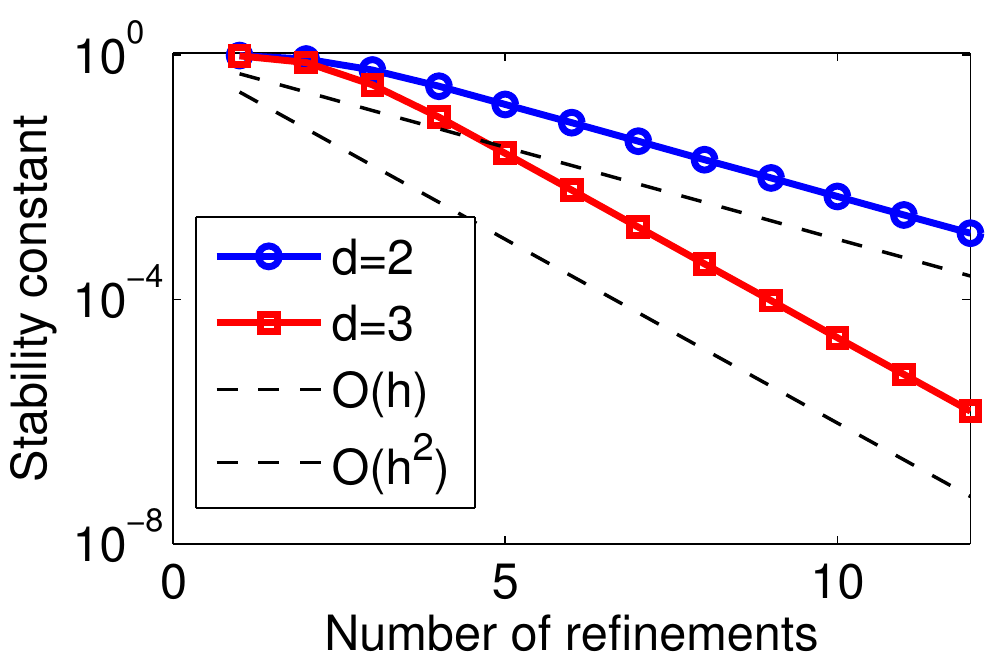}
\includegraphics[width=0.49\textwidth]{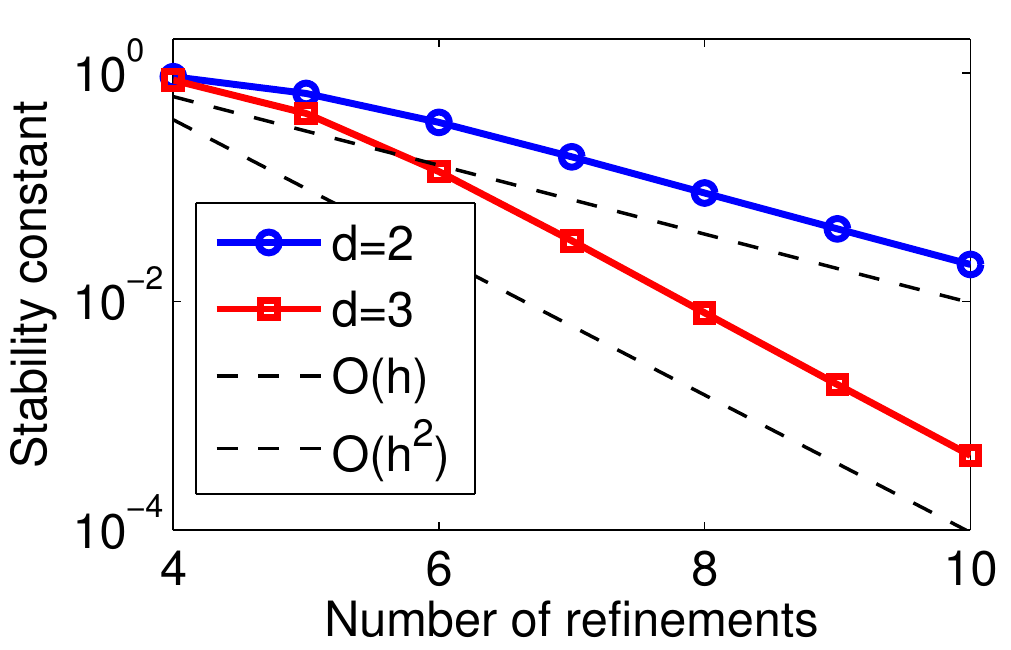}
\end{center}
\caption{$h$-dependency of the $L^2$ inf-sup constant for dimension $d=2$ and $d=3$. Left: $p= 2$, Right: $p=10$. }
\label{fig:checkerboard_constant}
\end{figure}

\begin{remark}
 Numerical experiments show, that the inf-sup constant can be recovered by the use of a staggered grid, which is similar to the behavior known from the finite element method. Another possibility is to use a coarse dual mesh for the Lagrange multipliers.
 \end{remark}

\subsection{Choice 2: stable pairing $p/p-2$}\label{sec:p_pm2}
Having an unstable pairing means roughly speaking that the chosen Lagrange multiplier space is too rich. An easy way to overcome this is by using a smaller space which motivates our second choice.
If the spline space $S^p(\widehat{\gamma})$ is at least $C^1$, then it is also possible to construct a spline space of degree $p-2$ on the knot vector(s) $\Xi_\delta''$ with $\delta=1,\dots,d-1$ obtained from the restriction of $\mathbf{\Xi}$ to the corresponding direction(s) removing in the underlying univariate knot vector the first and the last two knots. We denote this space by $\widehat{M}^{2} = \spann_{i=1,\,\ldots,\, n^{(2)}}\{\widehat{B}_{i}^{p-2} \}$, where the superscript $2$ refers to the degree difference between the primal and the dual space. Clearly, this choice will never provide an optimal convergence rate because even if the stability is true, in general the theoretical convergence rate will not exceed $p-1/2$. In what follows, we prove that $\widehat{M}^2$ verifies the inf-sup stability (\ref{eq:para_inf_sup}). 

The proof is based on an identification of both spaces using derivatives and integrals as well as on an auxiliary stability result for the degree $p-1$. Let us first introduce some preliminary notation.

To shorten our notation, we denote by $S^q$ with $q= p-2, \,p-1,$ and $p$ the spline spaces of degree $q$ constructed on $\mathbf{\Xi''}, \mathbf{\Xi'}$ and $\mathbf{\Xi}$, respectively.
Furthermore let us define the spline space with zero mean value $S^{p-1}_{\rm zmv} = \{\widehat{s}\in S^{p-1}: \int_0^1 \widehat{s} ~\mathrm{d}x = 0\}$ for $d=2$ and 
\[
S^{p-1}_{\rm zmv} =\left\{\widehat{s}\in S^{p-1}: \int_0^1 \widehat{s}(x,\bar y) \mathrm{d}x = 0 = \int_0^1 \widehat{s}(\bar x, y) \mathrm{d}y, \quad \bar x, \bar y \in [0,1] \right\},
\]
for $d=3$.
While for $d=2$, we consider a single derivative $D=\partial_x$ as the derivative operator, for $d=3$, due to the tensor product structure, we also consider the mixed derivative $D=\partial_{xy}$. Associated with the mixed derivative, we consider the tensor product Sobolev space
\[
H^{1,1}(\widehat \gamma) = H^1(0,1)\otimes H^1(0,1)= \{\widehat{v} \in L^2(\widehat \gamma): \partial_{x}^i \partial_y^j \widehat{v} \in L^2(\widehat \gamma), i,j \in \{ 0,1\} \},
\]
endowed with the norm
$
\| \widehat{v} \|_{H^{1,1}(\hat \gamma)}^2 = 
\| \widehat{v} \|_{H^{1}(\hat \gamma)}^2 + 
\|\partial_{xy} \widehat{v} \|_{L^2(\hat \gamma)}^2
$.
To simplify the notation, we will denote in the following $Z = H^1(\widehat \gamma)$ for $d=2$ and $Z=H^{1,1}(\widehat \gamma)$ for $d=3$. Let $Z'$ denotes the dual space of $Z$.

The following lemma shows that the given derivative operator maps bijectively the spaces $S^p_0$, $S^{p-1}_{\rm zmv}$ and $S^{p-2}$ into each other. 
\begin{lemma}\label{lem:spline_derivatives}
The operators $D: S^p_0\rightarrow S^{p-1}_{{\rm zmv}}$ and $D: S^{p-1}_{{\rm zmv}}\rightarrow S^{p-2}$ are bijections. Moreover for any $v\in Z \cap H_0^1(\widehat \gamma)$, it holds $\|v\|_{L^2(\hat \gamma)} \leq C \| D v\|_{Z'}$. 
\end{lemma}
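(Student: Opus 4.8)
The plan is to treat the two claims separately and then combine them. For the bijectivity statements, I would work first in the univariate setting ($d=2$) and then tensorize for $d=3$. Recall from Section~\ref{sec:iga_p1} that the derivative operator $\partial_\zeta \colon S^p(\Xi) \to S^{p-1}(\Xi')$ is linear and surjective whenever $S^p(\Xi) \subset C^0$, with kernel the constants; here the $C^1$ assumption on $S^p$ guarantees that $S^{p-1} \subset C^0$ as well, so $\partial_\zeta \colon S^{p-1}(\Xi') \to S^{p-2}(\Xi'')$ is also surjective with one-dimensional kernel. Restricting to the subspace $S^p_0$ of functions vanishing at the endpoints removes exactly the constants from the range, so $D \colon S^p_0 \to S^{p-1}_{\rm zmv}$ is injective (a spline in $S^p_0$ with vanishing derivative is constant, hence zero) and its image lands in $S^{p-1}_{\rm zmv}$ since $\int_0^1 Dv\,\mathrm dx = v(1)-v(0) = 0$. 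A dimension count closes the argument: $\dim S^p_0 = \dim S^p - 2$, $\dim S^{p-1}_{\rm zmv} = \dim S^{p-1} - 1$, and the relations between the dimensions of $S^p, S^{p-1}, S^{p-2}$ coming from knot removal give equality, so $D$ is a bijection; the same bookkeeping handles $D \colon S^{p-1}_{\rm zmv} \to S^{p-2}$, where surjectivity onto all of $S^{p-2}$ uses that antiderivatives can be normalized to have zero mean. For $d=3$, $D = \partial_{xy}$ factors as a composition of the two univariate derivative maps acting on the tensor factors, and $S^{p-1}_{\rm zmv}$, $S^p_0$ are precisely the tensor products (resp. sums of tensor products) of the univariate spaces involved, so bijectivity follows by tensorizing the one-dimensional result.

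For the norm estimate $\|v\|_{L^2(\hat\gamma)} \le C\|Dv\|_{Z'}$ with $v \in Z \cap H^1_0(\widehat\gamma)$, the idea is duality. For $d=2$, given $\phi \in L^2(\widehat\gamma)$, let $\Phi(x) = \int_0^x \phi(t)\,\mathrm dt - \int_0^1\int_0^s \phi(t)\,\mathrm dt\,\mathrm ds$ be the mean-zero antiderivative, so $\Phi \in H^1(\widehat\gamma) = Z$ with $\|\Phi\|_Z \le C\|\phi\|_{L^2}$; integrating by parts and using $v(0)=v(1)=0$ gives $\int_{\widehat\gamma} v\,\phi = \int_{\widehat\gamma} v\, \Phi' = -\int_{\widehat\gamma} (Dv)\,\Phi = -\langle Dv, \Phi\rangle$, whence $|\int v\phi| \le \|Dv\|_{Z'}\|\Phi\|_Z \le C\|Dv\|_{Z'}\|\phi\|_{L^2}$, and taking the supremum over $\phi$ yields the claim. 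For $d=3$ one does the analogous two-fold construction: given $\phi \in L^2(\widehat\gamma)$ define $\Phi$ by antidifferentiating in $x$ and then in $y$, subtracting off the appropriate mean-zero corrections so that $\Phi \in H^{1,1}(\widehat\gamma) = Z$ with $\|\Phi\|_{H^{1,1}} \le C\|\phi\|_{L^2}$; integrating by parts twice, using the homogeneous boundary conditions of $v$ on $\partial\widehat\gamma$ to kill all boundary terms, gives $\int_{\widehat\gamma} v\,\phi = \langle \partial_{xy} v, \Phi \rangle$ up to sign, and the duality bound finishes as before.

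The main obstacle I anticipate is the $d=3$ norm estimate: one has to verify carefully that the iterated antiderivative $\Phi$, after the mean-zero corrections in each variable, genuinely belongs to $H^{1,1}(\widehat\gamma)$ with a norm controlled by $\|\phi\|_{L^2(\widehat\gamma)}$, and that no boundary terms survive the double integration by parts — this is where the precise definition of $H^{1,1}$ and the full set of homogeneous conditions built into $Z \cap H^1_0(\widehat\gamma)$ are used. The dimension-counting for the bijectivity claims is routine once one tracks how many knots are removed in passing from $\Xi$ to $\Xi'$ to $\Xi''$, and the $C^1$ hypothesis is exactly what is needed to keep $S^{p-2}$ inside $C^{-1}$... i.e. to make the second derivative map well-defined on spline spaces without multiplicity issues at interior knots.
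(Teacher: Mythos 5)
Your proposal is correct, and for the norm estimate it is essentially the paper's argument: both proofs proceed by duality, testing $v$ against an arbitrary $\phi\in L^2$, writing $\phi$ as the ($D$-)derivative of a primitive in $Z$ with $\|\Phi\|_Z\leq C\|\phi\|_{L^2}$ (the paper phrases this via a Poincar\'e inequality for mean-zero functions), and integrating by parts so that only $Dv$ appears; the delicate point you flag for $d=3$ --- that $\partial_x v$ vanishes on the edges parallel to the $x$-axis while $n_2=0$ on the others, so both boundary terms die --- is exactly how the paper handles it (the double integration by parts is carried out in the proof of Theorem~\ref{thm:inf_sup_p-2}). Where you genuinely deviate is the bijectivity: the paper proves surjectivity constructively, exhibiting the antiderivative pre-image explicitly (for $d=3$ of the form $\int_0^x\!\int_0^y \widehat{s}^{p-2} - \widehat{f}^{p-1}(x)-\widehat{g}^{p-1}(y)-m$ with univariate zero-mean correction splines), whereas you argue by injectivity plus a dimension count in the univariate case and then tensorize. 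Both are valid; the paper's constructive route has the side benefit that the pre-image $\widehat{g}^{p-1}$ with $D\widehat{g}^{p-1}=\widehat{\mu}^{p-2}$ is exactly the object reused in the proof of Theorem~\ref{thm:inf_sup_p-2}, while your count is shorter but hinges on the identifications $S^p_0=S^p_{0}(\Xi_1)\otimes S^p_{0}(\Xi_2)$ and $S^{p-1}_{\rm zmv}=S^{p-1}_{\rm zmv}(\Xi_1')\otimes S^{p-1}_{\rm zmv}(\Xi_2')$; the latter is true precisely because the zero-mean condition is imposed line-wise in both directions (a coefficient computation shows the dimensions $(n_1'-1)(n_2'-1)$ match), and this verification should be spelled out rather than asserted, since with only a global mean-zero constraint the tensorization would fail.
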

\begin{proof}
Based on~\cite[Theorem 5.9]{Schumaker:07} the derivative of a spline of degree $p$ is a spline of degree $p-1$, see also Section~\ref{sec:iga_p1}. The injectivity follows from the additional constraints of the spline space. To show the surjectivity, we construct an element of the pre-image space. The coercivity of the derivative can be seen by an explicit computation using  partial integration.

Case $d=2$.
Given $\widehat{s}^{p-2} \in S^{p-2}$, we define $\widehat{s}^{p-1}(x) = \int_0^x \widehat{s}^{p-2}(\xi)\mathrm{d}\xi - m$, where $m\in \mathbb{R}$ is chosen such that $\int_0^1 \widehat{s}^{p-1} \mathrm{d}x =  0$.  Obviously $\widehat{s}^{p-1} \in S^{p-1}_{\rm zmv}$. For any $\widehat{s}^{p-1} \in S^{p-1}_{\rm zmv}$ we may define $\widehat{s}^p(x) = \int_0^x \widehat{s}^{p-1}(\xi)\mathrm{d}\xi $ and it holds $\widehat{s}^p \in S^p_0$.

To show the coercivity, consider any $\widehat{w}\in L^2(0,1)$. We can find $\widehat{z}\in H^1_{\rm zmv}(0,1) = \{\widehat{z}\in Z: \int_0^1 \widehat{z}\, \mathrm{d}x = 0\}$, such that $\partial_x \widehat{z} = \widehat{w}$ and then 
\begin{align*}
\|\widehat{v}\|_{L^2(\hat\gamma)} &= \sup_{\widehat{w}\in L^2(\hat\gamma)}\frac{\int_0^1 \widehat{v}\, \widehat{w}~\mathrm{d}x}{\|\widehat{w}\|_{L^2(\hat\gamma)}} = \sup_{\widehat{z}\in H^1_{\rm zmv}(\hat\gamma)} \frac{\int_0^1 \widehat{v} ~ \partial_x \widehat{z}~\mathrm{d}x}{\left|\widehat{z}\right|_{Z}} \\&
\leq 
C \sup_{\widehat{z}\in H^1_{\rm zmv}(\hat\gamma)} \frac{\int_0^1   \widehat{z}~\partial_x  \widehat{v} ~\mathrm{d}x}{\|\widehat{z}\|_{Z}} \leq  C\|  \partial_x  \widehat{v} \|_{Z'},
\end{align*}
where $C$ is the inverse of the Poincar\'e constant, i.e., $\|\widehat{z}\|_{Z} \leq {C}^{-1} \left| \widehat{z} \right|_{Z}$ for $\widehat{z}\in H^1_{\rm zmv}(\widehat\gamma)$. 

Case $d=3$.
Given  $\widehat{s}^{p-2} \in S^{p-2}$, we construct the spline
$\widehat{s}^{p-1}(x) = \int_0^x \int_0^y \widehat{s}^{p-2}(\xi, \eta) ~\mathrm{d}\eta ~\mathrm{d}\xi - \widehat{f}^{p-1}(x) - \widehat{g}^{p-1}(y) - m$, where $m\in \R$ and $\widehat{f}^{p-1},\widehat{g}^{p-1}$ are univariate splines of degree $p-1$ with zero mean value. These unknowns can be chosen such that $\widehat{s}^{p-1} \in S^{p-1}_{\rm zmv}$. As for the univariate case, given  $\widehat{s}^{p-1} \in S^{p-1}_{\rm zmv}$ we consider $\widehat{s}^p(x,y) = \int_0^x \int_0^y \widehat{s}^{p-1}(\xi,\eta)~ \mathrm{d}\eta~ \mathrm{d}\xi $ and it holds $\widehat{s}^p \in S^p_0$.

For the proof of the coercivity, partial integration needs to be performed twice. The integration will be shown in more details in the proof of Theorem~\ref{thm:inf_sup_p-2}.
\end{proof}

To apply the bijectivity of the derivative in the proof of the inf-sup condition, we can no longer work with the $L^2$ norm, but need to consider the $Z'$ and $Z$ norm.
 The following lemma states an auxiliary  stability result in these norms.

\begin{lemma}\label{lem:inf_sup_symmetry}
For any $\widehat{g}^{p-1} \in S^{p-1}_{\rm zmv}$, it holds
\begin{equation*}
 \sup_{\widehat{f}^{p-1}\in S^{p-1}_{\rm zmv} } \frac{\int_{\widehat \gamma} \widehat{g}^{p-1}\widehat{f}^{p-1}~\mathrm{d}x}{\|\widehat{f}^{p-1} \|_{Z'}}  \geq C \|\widehat{g}^{p-1}\|_{Z}.
\end{equation*}
\end{lemma}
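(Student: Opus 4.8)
The plan is to prove the inf-sup condition by explicitly constructing, for a given $\widehat{g}^{p-1}\in S^{p-1}_{\rm zmv}$, a test function $\widehat{f}^{p-1}\in S^{p-1}_{\rm zmv}$ that almost realises the $Z$--$Z'$ duality pairing with $\widehat{g}^{p-1}$. The one non-elementary ingredient is the uniform $Z$-stability of the $L^2$-orthogonal projection $\Pi$ onto the spline space $S^{p-1}(\widehat{\gamma})$. For $d=2$ this is the classical $H^1$-stability of the $L^2$-projection onto univariate splines on a quasi-uniform mesh (Assumption~\ref{as:uniform_mesh}), which follows from the inverse inequality together with the approximation estimate of Lemma~\ref{lem:Spline_approx} and the $H^1$-stability of a quasi-interpolant, via $\|\Pi v\|_{H^1}\leq\|\Pi v-I_hv\|_{H^1}+\|I_hv\|_{H^1}\leq Ch^{-1}\|v-I_hv\|_{L^2}+C\|v\|_{H^1}\leq C\|v\|_{H^1}$. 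For $d=3$ it follows by tensorisation, since $Z=H^{1,1}(\widehat{\gamma})$ is the Hilbert tensor product $H^1(0,1)\otimes H^1(0,1)$ and $\Pi$ is the tensor product of the univariate $L^2$-projections, so that $\|\Pi\|_{Z\to Z}\leq\|\Pi_1\|_{H^1\to H^1}\|\Pi_2\|_{H^1\to H^1}=:C_\Pi$. By $L^2$-self-adjointness of $\Pi$, this dualises to a uniform bound $\|\Pi\psi\|_{Z'}\leq C_\Pi\|\psi\|_{Z'}$ for $\psi\in L^2(\widehat{\gamma})$, since for $v\in Z$ one has $\langle\Pi\psi,v\rangle=(\psi,\Pi v)_{L^2}\leq\|\psi\|_{Z'}\|\Pi v\|_Z\leq C_\Pi\|\psi\|_{Z'}\|v\|_Z$.

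With this at hand I would first use Hilbert space duality and the density of $L^2(\widehat{\gamma})$ in $Z'$: for every $\varepsilon\in(0,1)$ there exists $\psi\in L^2(\widehat{\gamma})$ with $\|\psi\|_{Z'}=1$ and $\int_{\widehat{\gamma}}\widehat{g}^{p-1}\psi\,\mathrm{d}x\geq(1-\varepsilon)\|\widehat{g}^{p-1}\|_Z$. Set $\widehat{h}:=\Pi\psi\in S^{p-1}$. Two facts are then immediate: the pairing is preserved, $\int_{\widehat{\gamma}}\widehat{g}^{p-1}\widehat{h}\,\mathrm{d}x=\int_{\widehat{\gamma}}\widehat{g}^{p-1}\psi\,\mathrm{d}x$, because $\psi-\Pi\psi$ is $L^2$-orthogonal to $S^{p-1}\ni\widehat{g}^{p-1}$; and $\widehat{h}$ is $Z'$-stable, $\|\widehat{h}\|_{Z'}\leq C_\Pi\|\psi\|_{Z'}=C_\Pi$, by the dual stability estimate above.

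It remains to correct $\widehat{h}$ into the zero-mean-value subspace. For $d=2$ I would take $\widehat{f}^{p-1}:=\widehat{h}-\int_0^1\widehat{h}\,\mathrm{d}x$; for $d=3$, $\widehat{f}^{p-1}:=\widehat{h}-\int_0^1\widehat{h}(\cdot,y)\,\mathrm{d}y-\int_0^1\widehat{h}(x,\cdot)\,\mathrm{d}x+\int_{\widehat{\gamma}}\widehat{h}\,\mathrm{d}x$, where each subtracted term is again an element of $S^{p-1}(\widehat{\gamma})$ (a univariate spline in $S^{p-1}$ tensored with the constant). In both cases $\widehat{f}^{p-1}\in S^{p-1}_{\rm zmv}$; the pairing is unchanged, $\int_{\widehat{\gamma}}\widehat{g}^{p-1}\widehat{f}^{p-1}\,\mathrm{d}x=\int_{\widehat{\gamma}}\widehat{g}^{p-1}\widehat{h}\,\mathrm{d}x$, because the subtracted terms are annihilated by the zero-mean-value conditions on $\widehat{g}^{p-1}$; and the $Z'$-norm changes only by a fixed factor, since $|\int_0^1\widehat{h}\,\mathrm{d}x|=|\langle\widehat{h},1\rangle|\leq\|\widehat{h}\|_{Z'}\|1\|_Z$ and, for $d=3$, the averaging maps are bounded from $Z'$ into $(H^1(0,1))'$ using that $\|v\otimes 1\|_{H^{1,1}(\widehat{\gamma})}=\|v\|_{H^1(0,1)}$. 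Combining, $\|\widehat{f}^{p-1}\|_{Z'}\leq C$ and $\int_{\widehat{\gamma}}\widehat{g}^{p-1}\widehat{f}^{p-1}\,\mathrm{d}x\geq(1-\varepsilon)\|\widehat{g}^{p-1}\|_Z$, so the choice $\varepsilon=1/2$ gives the stated estimate. The main obstacle is precisely the uniform $Z$-stability of the $L^2$-projection (and, for $d=3$, getting the anisotropic $H^{1,1}$ bookkeeping of the averaging corrections right); everything else is routine manipulation of the zero-mean-value constraints.
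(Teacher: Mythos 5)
Your proof is correct, and it shares the paper's central ingredient but reaches the conclusion by a genuinely different route. Both arguments hinge on the uniform $Z$-stability of the $L^2$-projection $\Pi$ onto $S^{p-1}$ (univariate $H^1$-stability on quasi-uniform meshes, tensorized for $d=3$; your abstract argument via $Z=H^1(0,1)\otimes H^1(0,1)$ being a Hilbert tensor product and $\|\Pi_1\otimes\Pi_2\|_{Z\to Z}\le\|\Pi_1\|_{H^1\to H^1}\|\Pi_2\|_{H^1\to H^1}$ replaces the paper's explicit computation, but is the same fact). The difference lies in how the \emph{transposed} inf-sup, with the $Z'$-norm in the denominator, is obtained. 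The paper first proves the inf-sup in the natural orientation (dual variable in $Z'$, test functions in $Z$), restricts both arguments to $S^{p-1}_{\rm zmv}$ by subtracting averages on the $Z$-side, where $\|\widehat g^{p-1}\|_Z\le\|\widehat q^{p-1}\|_Z$ is immediate, and then interchanges the roles of infimum and supremum by invoking Proposition 3.4.3 of Boffi--Brezzi--Fortin. You instead dualize the projector ($\|\Pi\psi\|_{Z'}\le C\|\psi\|_{Z'}$ by $L^2$-self-adjointness), pick $\psi\in L^2$ nearly attaining $\|\widehat g^{p-1}\|_Z$ in the $Z$--$Z'$ duality, project it into $S^{p-1}$ (pairing preserved since $\Pi\widehat g^{p-1}=\widehat g^{p-1}$), and correct into $S^{p-1}_{\rm zmv}$ on the $Z'$-side, which requires --- and you correctly supply --- the $Z'$-boundedness of the averaging maps via $\|v\otimes 1\|_{H^{1,1}}=\|v\|_{H^1(0,1)}$ and $\|A_yv\|_{H^1(0,1)}\le\|v\|_Z$. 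What each approach buys: yours is constructive and self-contained, avoiding the finite-dimensional interchange result entirely (and would survive in non-finite-dimensional settings), at the price of the Gelfand-triple technicalities (density of $L^2$ in $Z'$, dual stability of $\Pi$) and the more delicate bookkeeping of the zero-mean correction in the dual norm; the paper's route stays within the standard Fortin-operator plus saddle-point toolbox, keeps the zero-mean correction on the side where it is trivial to estimate, and delegates the transposition to a cited proposition.
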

\begin{proof}
The equal order pairing $Z-Z'$ inf-sup condition of $ S^{p-1}$ is first considered by introducing the Fortin operator $\Pi: L^2 \rightarrow S^{p-1}$   and proving its $Z$ stability. Then we show that the inf-sup condition remains satisfied for the constrained space $S^{p-1}_{\rm zmv}$. Since the infinum over a sub-space is an upper-bound of the infinum over a space, the critical part is the restriction of the primal space. 

Case $d=2$.
Standard techniques show that the Fortin operator associated with $S^{p-1}$, which is the $L^2$ projection, is uniformly $Z$ stable, see, e.g.,~\cite[Lemma 1.8]{lamichhane:06}. Thus the $Z-Z'$ inf-sup condition holds on $S^{p-1}$, i.e., for $\widehat{q}^{p-1} \in S^{p-1}$ it holds,
\begin{equation}\label{inf_sup_Z_Z_dual_standard}
\sup_{\widehat{r}^{p-1} \in S^{p-1} } \frac{\int_{\widehat \gamma} \widehat{r}^{p-1}\widehat{q}^{p-1} ~\mathrm{d}x}{ \|\widehat{r}^{p-1}\|_{Z}  } \geq C\|\widehat{q}^{p-1} \|_{Z'}.
\end{equation}
Next, we show that the restriction to $S^{p-1}_{\rm zmv}$ retains this stability.

Let us consider $\widehat{f}^{p-1}\in S^{p-1}_{\rm zmv}$, since the inf-sup condition remains satisfied for $\widehat{q}^{p-1} \in S^{p-1}$ and $\widehat{f}^{p-1}\in S^{p-1}_{\rm zmv}$. Let  us define $\widehat{g}^{p-1}\in S^{p-1}_{\rm zmv}$ such that $\widehat{g}^{p-1}(x) = \widehat{q}^{p-1}(x) - \int_{\widehat \gamma} \widehat{q}^{p-1}(\xi) \mathrm{d}\xi  \in S^{p-1}_{\rm zmv}$ and note that for $\widehat{f}^{p-1}\in S^{p-1}_{\rm zmv}$ 
\[
\int_{\widehat \gamma} \widehat{f}^{p-1}  \widehat{q}^{p-1} \mathrm{d}x=\int_{\widehat \gamma} \widehat{f}^{p-1} \widehat{g}^{p-1} \mathrm{d}x  
\]
and $\| \widehat{g}^{p-1}\|_{Z} \leq \| \widehat{q}^{p-1} \|_{Z}$.
This shows
\begin{equation*}
\inf_{ \widehat{f}^{p-1}\in S^{p-1}_{\rm zmv} } \sup_{{\widehat{g}}^{p-1} \in S^{p-1}_{\rm zmv} } \frac{\int_{\widehat \gamma} \widehat{g}^{p-1}\widehat{f}^{p-1} ~\mathrm{d}x}{\|\widehat{f}^{p-1} \|_{Z'} \|\widehat{g}^{p-1}\|_{Z}  } \geq C > 0.
\end{equation*}
Now using~\cite[Proposition 3.4.3]{brezzi:13}, we interchange the spaces of the infimum and the supremum which yields the result.

Case $d=3$. Although we follow the same structure as in the case $d=2$, there are some essential differences. We note that $Z=H^{1,1}(\widehat \gamma)$ is no longer a standard Sobolev space, and thus the $Z$ stability of the Fortin operator cannot  be shown as in the case $d=2$. Instead, we make use of a tensor product of the univariate Fortin operators. See~\cite{sangalli:12} for another application of a tensor product of projection operators. 

We first show, that the tensor product of univariate $L^2$ projections is the multivariate $L^2$ projection, i.e., the Fortin operator. Then we show that the $H^1$ stability of the univariate projections yield the $Z$ stability of their tensor product. 
We define
 $\overline \Pi_i: L^2(0,1) \rightarrow S^{p-1}(\Xi_i)$ as the $L^2$ projection into the univariate spline space.  Their tensor product $\Pi = \overline \Pi_1 \otimes \overline \Pi_2$ is defined as described in the following.   We first extend the projections to $\widehat \gamma$ by $\Pi_1: L^2(\widehat \gamma) \rightarrow L^2(\widehat \gamma)$ and $\Pi_2: L^2(\widehat \gamma) \rightarrow L^2(\widehat \gamma)$, such that
\[
[\Pi_1 \widehat{f}](\xi, \eta) = [\overline \Pi_1 \bar f_\eta](\xi), \quad 
[\Pi_2 \widehat{f}](\xi, \eta) = [\overline \Pi_2 \bar{f}_\xi](\eta).
\]
Here  $\bar f_\eta$ denote the univariate function depending on $\xi$, where the coordinate $\eta$ plays the role of a parameter. $\bar f_\xi$ is defined analogously and it holds $\widehat{f}(\xi, \eta) = \bar f_\xi(\eta) = \bar f_\eta(\xi)$. Now the tensor product of the projections can be defined as $\Pi = \overline \Pi_1 \otimes \overline \Pi_2: L^2(\widehat \gamma) \rightarrow S^{p-1}$ by
$
 \overline \Pi_1 \otimes \overline \Pi_2 = \Pi_1 \circ \Pi_2 = \Pi_2 \circ \Pi_1.
$ 

Applying the univariate projection property of $\overline \Pi_i$, a direct calculation shows that $\Pi$ is the $L^2$ projection onto $S^{p-1}$. Let $\widehat{B}_{i,1}, \widehat{B}_{j,2}$ denote the univariate basis functions in the two parametric directions, then we get
\begin{equation*}
\int_{\widehat \gamma} (\Pi \widehat{v})(x,y)\widehat{B}_{i,1}(x) \widehat{B}_{j,2}(y) ~\mathrm{d}x~\mathrm{d}y  \quad=
\int_{\widehat \gamma} \widehat{v}(x,y)  \widehat{B}_{i,1}(x) \widehat{B}_{j,2}(y)~\mathrm{d}x~\mathrm{d}y.
\end{equation*}

For a fixed $\bar x, \bar y \in (0,1)$ denote $I_{\bar y} = \{(x,\bar y)\in (0,1)^2\}$ and $I_{\bar x} = \{(\bar x, y) \in (0,1)^2\}$. 
For the calculation, we need the two steps resulting from the univariate stability of the unidirectional projectors in $L^2(I_k)$ and $H^1(I_k)$ for $k=\overline{x}$ or $\overline{y}$:

First, for any $\bar y \in (0,1)$, we have
\begin{align*}
\| \partial_{xy} \Pi_1 \widehat{w} \|_{L^2(I_{\bar y} ) }& = \| \partial_{x} \Pi_1 (\partial_y  \widehat{w}) \|_{L^2(I_{\bar y} ) } = \left|  \Pi_1 (\partial_y  \widehat{w}) \right|_{H^1(I_{\bar y} ) } \leq C \|  \partial_y  \widehat{w} \|_{H^1(I_{\bar y} ) } \\&= 
C \|\partial_y  \widehat{w} \|_{L^2(I_{\bar y} ) } + C \|\partial_{xy}  \widehat{w} \|_{L^2(I_{\bar y} ) }
\end{align*}
We will use this result for $ \widehat{w} = \Pi_2 \widehat{v}$. Of course the analogue result for $\Pi_2$ and any $\bar x \in (0,1)$ also holds.

Hence, we see
\begin{align*}
\|\partial_{xy} \Pi  \widehat{v} \|_{L^2(\hat \gamma)}^2 &= \int_{y\in I^2} \| \partial_{xy} \Pi  \widehat{v}  \|_{L^2(I_y)}^2 ~\mathrm{d}y  \\&
\leq \int_{y\in I^2} \| \partial_{y} \Pi_2  \widehat{v}  \|_{L^2(I_y)}^2 ~\mathrm{d}y  + \int_{y\in I^2} \| \partial_{xy} \Pi_2  \widehat{v}  \|_{L^2(I_y)}^2 ~\mathrm{d}y \\&= 
\int_{x\in I^1} \| \partial_{y} \Pi_2  \widehat{v}  \|_{L^2(I_x)}^2 ~\mathrm{d}x  + \int_{x\in I^1} \| \partial_{xy} \Pi_2  \widehat{v}  \|_{L^2(I_x)}^2 ~\mathrm{d}x \\
&\leq C \|  \widehat{v} \|_{Z}^2,
\end{align*}
i.e., the operator is $Z$ stable.

The $Z-Z'$ stability of $S^{p-1}_{\rm zmv}$ can be concluded similarly to the univariate case starting from the $Z-Z'$ inf-sup condition for $\widehat{q}^{p-1}$ and $\widehat{r}^{p-1} \in S^{p-1}$, see~(\ref{inf_sup_Z_Z_dual_standard}). We can consider $\widehat{f}^{p-1}\in S^{p-1}_{\rm zmv}$, since the inf-sup condition remains valid for $\widehat{q}^{p-1} \in S^{p-1}$ and $\widehat{f}^{p-1}\in S^{p-1}_{\rm zmv}$. Now we  define $\widehat{g}^{p-1}\in S^{p-1}_{\rm zmv}$ such that $\widehat{g}^{p-1}(x,y) = \widehat{q}^{p-1}(x,y) - \widehat{s}^1_0(x) - \widehat{s}^2_0(y) - c \in  S^{p-1}_{\rm zmv}$ with $\widehat{s}^1_0 \in S^{p-1}(\Xi_1), \widehat{s}^2_0 \in S^{p-1}(\Xi_2)$ and $c \in \R$, and note that for $\widehat{f}^{p-1}\in S^{p-1}_{\rm zmv}$ it holds
\[
\int_{\widehat \gamma} \widehat{f}^{p-1}  \widehat{q}^{p-1} \mathrm{d}x=\int_{\widehat \gamma} \widehat{f}^{p-1} \widehat{g}^{p-1} \mathrm{d}x.  
\]
 Now, the $Z-Z'$ stability  can be concluded by noting that
 $\| \widehat{g}^{p-1}\|_{Z} \leq \| \widehat{q}^{p-1} \|_{Z} $. 
 The proof ends the same way as the case $d=2$ using~\cite[Proposition 3.4.3]{brezzi:13}.
\end{proof}

It remains to combine these preliminary results to prove the main theorem of this section. We use the bijectivity between the spline spaces of different degrees, stated in Lemma~\ref{lem:spline_derivatives}, and partial integration to estimate the inf-sup term by the equal order $p-1$ stability which was estimated in Lemma~\ref{lem:inf_sup_symmetry}.
\begin{theorem}\label{thm:inf_sup_p-2}
Let $p \geq 2$ and the knot vectors $\Xi_\delta, \, \delta=1,\,\ldots,\, d-1$, be such that $S^p(\widehat{\gamma}) \subset C^1(\widehat{\gamma})$. The dual space $\widehat{M}^{2}$ verifies
\begin{align*}
\sup_{\widehat{w} \in S^p_0} \frac{\int_{\widehat \gamma}\widehat{ \mu}\, \widehat{w}~\mathrm{d}x }{\|\widehat{w} \|_{L^2(\hat \gamma)}} \geq C \| \widehat{\mu} \|_{L^2(\hat \gamma)}, \quad \widehat{\mu} \in \widehat{M}^2
\end{align*}
with a constant $C$ independent of the mesh size, but possibly dependent on p.
\end{theorem}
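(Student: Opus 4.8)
The plan is to transport the $p/(p-2)$ inf-sup condition to the equal-order $(p-1)/(p-1)$ stability of Lemma~\ref{lem:inf_sup_symmetry}, using the derivative isomorphisms of Lemma~\ref{lem:spline_derivatives} together with integration by parts. Fix $\widehat{\mu}\in\widehat{M}^2 = S^{p-2}$. By Lemma~\ref{lem:spline_derivatives}, $D\colon S^{p-1}_{\rm zmv}\to S^{p-2}$ is a bijection, so there is a unique $\widehat{g}^{p-1}\in S^{p-1}_{\rm zmv}$ with $D\widehat{g}^{p-1}=\widehat{\mu}$. Since $Z=H^1(\widehat\gamma)$ for $d=2$ and $Z=H^{1,1}(\widehat\gamma)$ for $d=3$, the very definition of the $Z$-norm gives the elementary bound $\|\widehat{\mu}\|_{L^2(\hat\gamma)}=\|D\widehat{g}^{p-1}\|_{L^2(\hat\gamma)}\le\|\widehat{g}^{p-1}\|_Z$.

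Next I rewrite the numerator in~(\ref{eq:para_inf_sup}). For $\widehat{w}\in S^p_0$ I integrate by parts --- once for $d=2$, once in each coordinate for $d=3$. As $\widehat{w}$ vanishes on $\partial\widehat\gamma$ (and, for $d=3$, its side-tangential derivatives vanish there as well), every boundary term drops and $\int_{\widehat\gamma}\widehat{\mu}\,\widehat{w}\,\mathrm{d}x=\int_{\widehat\gamma}(D\widehat{g}^{p-1})\,\widehat{w}\,\mathrm{d}x=\pm\int_{\widehat\gamma}\widehat{g}^{p-1}\,(D\widehat{w})\,\mathrm{d}x$. By Lemma~\ref{lem:spline_derivatives}, $D\colon S^p_0\to S^{p-1}_{\rm zmv}$ is a bijection, so $\widehat{f}^{p-1}:=D\widehat{w}$ ranges over all of $S^{p-1}_{\rm zmv}$ as $\widehat{w}$ ranges over $S^p_0$; moreover $\widehat{w}\in S^p_0\subset Z\cap H^1_0(\widehat\gamma)$, so the coercivity part of Lemma~\ref{lem:spline_derivatives} gives $\|\widehat{w}\|_{L^2(\hat\gamma)}\le C\|D\widehat{w}\|_{Z'}=C\|\widehat{f}^{p-1}\|_{Z'}$.

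Putting the pieces together, and using that $S^p_0$ is a linear space (so the sign $\pm$ is harmless), one obtains the chain
\begin{equation*}
\sup_{\widehat{w}\in S^p_0}\frac{\int_{\widehat\gamma}\widehat{\mu}\,\widehat{w}\,\mathrm{d}x}{\|\widehat{w}\|_{L^2(\hat\gamma)}}\ \ge\ C\sup_{\widehat{f}^{p-1}\in S^{p-1}_{\rm zmv}}\frac{\bigl|\int_{\widehat\gamma}\widehat{g}^{p-1}\widehat{f}^{p-1}\,\mathrm{d}x\bigr|}{\|\widehat{f}^{p-1}\|_{Z'}}\ \ge\ C\|\widehat{g}^{p-1}\|_Z\ \ge\ C\|\widehat{\mu}\|_{L^2(\hat\gamma)},
\end{equation*}
where the middle inequality is precisely Lemma~\ref{lem:inf_sup_symmetry} and the last one is the elementary bound from the first paragraph. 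This is the asserted stability, with $C$ depending only on $p$ (through the Poincar\'e constant in Lemma~\ref{lem:spline_derivatives} and the projector constants in Lemma~\ref{lem:inf_sup_symmetry}) and not on the mesh size.

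The step that needs the most care is the integration by parts for $d=3$: one must verify that both boundary integrals vanish --- which uses that $\widehat{w}$ and its side-tangential derivatives are zero on $\partial\widehat\gamma$ --- and that the resulting test function $\partial_{xy}\widehat{w}$ really lands in $S^{p-1}_{\rm zmv}$, which is exactly the content of Lemma~\ref{lem:spline_derivatives}. The remaining subtlety is purely bookkeeping: each of the three norm estimates must point in the correct direction, so that the $L^2$ denominator is first replaced by the weaker $Z'$ norm --- the only place the degree drop from $p$ to $p-1$ is exploited --- and only afterwards the equal-order result of Lemma~\ref{lem:inf_sup_symmetry} is invoked.
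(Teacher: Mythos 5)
Your argument is correct and is essentially the paper's own proof: you use the derivative bijections and the coercivity bound from Lemma~\ref{lem:spline_derivatives}, integrate by parts (once for $d=2$, twice with the vanishing boundary terms for $d=3$), substitute $\widehat{f}^{p-1}=D\widehat{w}$, and invoke the equal-order $Z$--$Z'$ stability of Lemma~\ref{lem:inf_sup_symmetry}, finishing with $\|\widehat{g}^{p-1}\|_Z\geq\|D\widehat{g}^{p-1}\|_{L^2(\hat\gamma)}=\|\widehat{\mu}\|_{L^2(\hat\gamma)}$. The only differences are cosmetic (your explicit $\pm$ sign bookkeeping and the order in which the three estimates are assembled), so there is nothing to add.
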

\begin{proof}
As before, the cases $d=2$ and $d=3$ are considered separately. We perform partial integration, noting that in the bivariate case, a tensor product structure is exploited. 

Given any $\widehat{\mu}^{p-2} \in S^{p-2}$, we may introduce $\widehat{g}^{p-1}\in S^{p-1}_{\rm zmv}$, such that $\partial_x \widehat{g}^{p-1} = \widehat{\mu}^{p-2}$ as constructed in Lemma~\ref{lem:spline_derivatives}.

For the case $d=2$, partial integration yields
\begin{align*}
\sup_{\widehat{w}^p \in S_0^p} \frac{\int_{\widehat \gamma} \widehat{w}^p \, \widehat{\mu}^{p-2} \mathrm{d}x}{\|\widehat{w}^p\|_{L^2(\hat \gamma)} } &=
\sup_{\widehat{w}^p \in S_0^p} \frac{\int_{\widehat \gamma} \widehat{w}^p ~\partial_x  \widehat{g}^{p-1} \mathrm{d}x}{\|\widehat{w}^p\|_{L^2(\hat \gamma)} } =
\sup_{\widehat{w}^p \in S_0^p} \frac{\int_{\widehat \gamma}   \widehat{g}^{p-1} ~\partial_x \widehat{w}^p \mathrm{d}x}{\|\widehat{w}^p\|_{L^2(\hat \gamma)} }.
\end{align*}
Now, let us denote $ \widehat{f}^{p-1} = \partial_x  \widehat{w}^p \in S^{p-1}_{\rm zmv}$ and use the coercivity of the derivative as stated in Lemma~\ref{lem:spline_derivatives}. Since $\partial_x$ is bijective from $S^p_0$ onto $S^{p-1}_{\rm zmv}$, we have
\begin{align*}
\sup_{ \widehat{w}^p \in S_0^p} \frac{\int_{\widehat \gamma}  \widehat{g}^{p-1}~ \partial_x  \widehat{w}^p ~\mathrm{d}x}{\| \widehat{w}^p\|_{L^2(\hat \gamma)} } &\geq
\sup_{ \widehat{w}^p \in S_0^p} C \frac{\int_{\widehat \gamma}  \widehat{g}^{p-1} ~ \partial_x  \widehat{w}^p ~\mathrm{d}x}{\|\partial_x  \widehat{w}^p\|_{Z'} } \\&=
\sup_{ \widehat{f}^{p-1} \in S_{\rm zmv}^{p-1}} C \frac{\int_{\widehat \gamma}  \widehat{f}^{p-1} \, \widehat{g}^{p-1} ~\mathrm{d}x}{\| \widehat{f}^{p-1}\|_{Z'} }.
\end{align*}
Now, we make use of the $Z'-Z$ stability on the equal order pairing, as stated in Lemma~\ref{lem:inf_sup_symmetry}.
Since $\partial_x \widehat{g}^{p-1} = \widehat{\mu}^{p-2}$, we have
\begin{align*}
\sup_{\widehat{f}^{p-1} \in S_{\rm zmv}^{p-1}} &C \frac{\int_{\widehat \gamma} \widehat{f}^{p-1} \, \widehat{g}^{p-1}~\mathrm{d}x}{\|\widehat{f}^{p-1}\|_{Z'} } \geq 
C \|  \widehat{g}^{p-1} \|_{Z}   \geq C \left| \widehat{g}^{p-1}\right|_{Z} = C \|\widehat{\mu}^{p-2}\|_{L^2(\hat \gamma)},
\end{align*}
which yields the stated inf-sup condition.

The proof for the case $d=3$ is analogue, but special care must be taken due to the tensor product structure. In this case, the suitable differential operator is the mixed derivative $\partial_{xy}$, so the partial integration has to be performed twice. Since most parts of the proof were shown in the previous lemmas, proving the analogue partial integration formula is the only remaining part. Given $\widehat{f}^{p-2} \in S^{p-2}_0$, define $\widehat{g}^{p-1} \in S^{p-1}_{\rm zmv}$ such that $\partial_{xy} \widehat{g}^{p-1} = \widehat{\mu}^{p-2}$. We apply Gau\ss{} theorem twice and note that in both cases the boundary term vanishes
\begin{align*}
\int_{\widehat \gamma} \widehat{g}^{p-1} \, \partial_i \widehat{w}^{p}~\mathrm{d}x = \int_{\partial {\widehat \gamma}} \widehat{w}^p\, \widehat{g}^{p-1} n_i~\mathrm{d}\sigma  - \int_{\widehat \gamma} \widehat{w}^p \, \partial_i \widehat{g}^{p-1}~\mathrm{d}x,
\end{align*}
where $n_i$ is the $i$-th component of the outward unit normal on $\partial \widehat \gamma$, i.e., $n_i \in \{0,\pm 1\}$. 

Using the zero trace of $w^p \in H^{1,1}_0(\widehat \gamma)$, the first step
\begin{align*}
\int_{{\widehat \gamma}} \widehat{w}^p\, \widehat{\mu}^{p-2} \mathrm{d}x &= \int_{{\widehat \gamma}} \widehat{w}^p \partial_{xy} \widehat{g}^{p-1}\mathrm{d}x \notag\\&= 
-\int_{{\widehat \gamma}} \partial_x \widehat{w}^p\, \partial_{y} \widehat{g}^{p-1}\mathrm{d}x +  { \int_{\partial {\widehat \gamma}} \widehat{w}^p \,\partial_{y} \widehat{g}^{p-1} n_1 \mathrm{d}\sigma }\notag\\& = -\int_{{\widehat \gamma}} \partial_x \widehat{w}^p\, \partial_{y} \widehat{g}^{p-1}  \mathrm{d}x 
\end{align*}
follows.

For the second step, we use that on the part of $\partial {\widehat \gamma}$ parallel to the $x$-axis, it holds $\partial_x \widehat{w}^p = 0$. On the orthogonal part (parallel to the $y$-axis), it holds $n_2  = 0$. 
\begin{align*}
-\int_{{\widehat \gamma}} \partial_x  \widehat{w}^p\, \partial_{y} \widehat{g}^{p-1}\mathrm{d}x \notag
&=
\int_{\widehat \gamma} \partial_{xy}  \widehat{w}^p\, \widehat{g}^{p-1} \mathrm{d}x - { \int_{\partial {\widehat \gamma}} \partial_x  \widehat{w}^p\, \widehat{g}^{p-1} n_2 \mathrm{d}\sigma } \\&= \int_{\widehat \gamma} \partial_{xy}  \widehat{w}^p\, \widehat{g}^{p-1} \mathrm{d}x. 
\end{align*}
We define $\widehat{f}^{p-1} = \partial_{xy}  \widehat{w}^p \in S^{p-1}_{\rm zmv}$ and continue analogously to the univariate case. Note, that this proof is not restricted to the bivariate case, but can be applied to tensor products of arbitrary dimensions.
\end{proof}

While we considered an inf-sup condition in the parametric space (\ref{eq:para_inf_sup}), the inf-sup condition, Assumption~\ref{as:mortar_intro:inf_sup}, needs to be fulfilled in the physical domain. Now we prove from Theroem~\ref{thm:inf_sup_p-2} the inf-sup stability in the physical space.

\begin{theorem}\label{thm:inf_sup_physical} 
Let (\ref{eq:para_inf_sup}) holds and
let $M_{l,h}^2=\{ \mu = \widehat \mu \circ \mathbf{F}_{s(l)}^{-1}, \widehat \mu \in S^{p-2}(\widehat \gamma)\}$, and $W_{l,h}=\{w = ((\widehat w/\widehat{DW}) \circ \mathbf{F}_{s(l)}^{-1}), \widehat w \in S^{p}_0(\widehat \gamma)\}$ be respectively the Lagrange multiplier space and the primal trace space given in the physical domain. Then, for $h$ sufficiently small, the pairing $W_{l,h} - M_{l,h}^2$ fulfills a uniform inf-sup condition, i.e., for each $\mu \in M_{l,h}^2$, it holds
\begin{align*}
\sup_{{w} \in W_{l,h}} \frac{\int_{\gamma} \mu\, w~\mathrm{d}\sigma }{\|{w} \|_{L^2(\gamma)}} \geq C \| {\mu} \|_{L^2(\gamma)}.
\end{align*}
\end{theorem}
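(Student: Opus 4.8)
The plan is to transfer the parametric stability (\ref{eq:para_inf_sup}) to the physical interface by means of the change of variables (\ref{change_of_variable}), viewing the resulting Jacobian as a smooth, uniformly positive and bounded multiplicative weight, and then to absorb this weight through a perturbation argument in which the smallness of $h$ enters.

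First I would rewrite the physical integral via (\ref{change_of_variable}) and set $J:=(\widehat{DW})^{-1}\det(\nabla_{\widehat\gamma}\mathbf F_{s(l)})$. By Assumption~\ref{as:regularity_mapping} and the uniform positivity of the NURBS weights, $J$ is piecewise $C^\infty$ with respect to the fixed (coarse, $h$-independent) mesh and satisfies $0<c_0\le J\le c_1<\infty$ uniformly in $h$. Likewise the two $L^2$-norms transform with $h$-independent equivalence constants, since $\|\mu\|_{L^2(\gamma)}^2=\int_{\widehat\gamma}\widehat\mu^2\det(\nabla_{\widehat\gamma}\mathbf F_{s(l)})\,\mathrm{d}x$ and $\|w\|_{L^2(\gamma)}^2=\int_{\widehat\gamma}\widehat w^2(\widehat{DW})^{-2}\det(\nabla_{\widehat\gamma}\mathbf F_{s(l)})\,\mathrm{d}x$ involve smooth, positive, bounded weights. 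Hence it suffices to prove, for $h$ small and with a constant independent of $h$, the weighted parametric inf-sup $\sup_{\widehat w\in S^p_0}\|\widehat w\|_{L^2(\widehat\gamma)}^{-1}\int_{\widehat\gamma}\widehat w\,\widehat\mu\,J\,\mathrm{d}x\ge C\|\widehat\mu\|_{L^2(\widehat\gamma)}$ for every $\widehat\mu\in S^{p-2}$ (notation as in Section~\ref{sec:p_pm2}).

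For the perturbation step, given $\widehat\mu\in S^{p-2}$ I would pick $\widehat\mu_J\in S^{p-2}$, e.g.\ the $L^2$-projection $P_{S^{p-2}}(J\widehat\mu)$, and first establish the approximation bound $\|J\widehat\mu-\widehat\mu_J\|_{L^2(\widehat\gamma)}\le C\,h\,\|\widehat\mu\|_{L^2(\widehat\gamma)}$. Element by element, $J\widehat\mu$ is a piecewise-$C^\infty$ function times a polynomial of degree $\le p-2$, so its local best $S^{p-2}$-approximation has error bounded by $C h_Q^{p-1}|J\widehat\mu|_{H^{p-1}(Q)}$; since $\widehat\mu|_Q$ has vanishing $(p-1)$-th derivative, Leibniz' rule plus an inverse inequality give $|J\widehat\mu|_{H^{p-1}(Q)}\le C h_Q^{-(p-2)}\|\widehat\mu\|_{L^2(Q)}$, and summation over elements together with Assumption~\ref{as:uniform_mesh} yields the $O(h)$ estimate (for $d=3$ a tensor-product quasi-interpolant is used instead). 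Consequently $\|\widehat\mu_J\|_{L^2}\ge\|J\widehat\mu\|_{L^2}-Ch\|\widehat\mu\|_{L^2}\ge(c_0-Ch)\|\widehat\mu\|_{L^2}\ge\frac{c_0}{2}\|\widehat\mu\|_{L^2}$ for $h$ small. Now apply the hypothesis (\ref{eq:para_inf_sup}) to $\widehat\mu_J\in S^{p-2}$, obtaining $\widehat w_0\in S^p_0$ with $\int_{\widehat\gamma}\widehat w_0\,\widehat\mu_J\,\mathrm{d}x\ge\beta_0\|\widehat w_0\|_{L^2}\|\widehat\mu_J\|_{L^2}$, so that
\[
\int_{\widehat\gamma}\widehat w_0\,\widehat\mu\,J\,\mathrm{d}x=\int_{\widehat\gamma}\widehat w_0\,\widehat\mu_J\,\mathrm{d}x+\int_{\widehat\gamma}\widehat w_0\,(J\widehat\mu-\widehat\mu_J)\,\mathrm{d}x\ge\Big(\frac{\beta_0 c_0}{2}-Ch\Big)\|\widehat w_0\|_{L^2}\|\widehat\mu\|_{L^2}\ge\frac{\beta_0 c_0}{4}\|\widehat w_0\|_{L^2}\|\widehat\mu\|_{L^2}
\]
once $h$ is small enough. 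Dividing by $\|\widehat w_0\|_{L^2}$, taking the supremum over $S^p_0$, and undoing the norm equivalences of the first step yields the claimed physical inf-sup condition.

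I expect the main obstacle to be the approximation estimate $\|J\widehat\mu-P_{S^{p-2}}(J\widehat\mu)\|_{L^2}\le Ch\|\widehat\mu\|_{L^2}$: although $J\widehat\mu$ is not a spline, it must be compared to $S^{p-2}$ locally and with the correct $h$-scaling, and one has to check that the constant is genuinely independent of the refinement level (i.e.\ that $\|J\|_{C^{p-1}}$ on each current element stays bounded, which follows from Assumption~\ref{as:regularity_mapping} since $\mathbf F_{s(l)}$ and $\widehat{DW}$ are fixed functions) and that the reduced global smoothness of $J\widehat\mu$ at high-multiplicity knots does not degrade the rate --- there $S^{p-2}$ itself is discontinuous, so the local approximation is not hindered. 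The bookkeeping is somewhat heavier in three dimensions, where the tensor-product structure and the quasi-uniformity of Assumption~\ref{as:uniform_mesh} keep the inverse-inequality constants uniform.
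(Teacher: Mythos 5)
Your proposal is correct and follows essentially the same route as the paper: transform to the parametric domain via (\ref{change_of_variable}), treat $\rho=(\widehat{DW})^{-1}\det(\nabla_{\hat\gamma}\mathbf{F}_{s(l)})$ as a uniformly bounded weight, project $\rho\widehat\mu$ onto $S^{p-2}$, apply the parametric inf-sup (\ref{eq:para_inf_sup}) to the projection, and absorb the $O(h)$ super-approximation error for $h$ small. The only difference is that you sketch the super-approximation bound $\|\rho\widehat\mu-\Pi(\rho\widehat\mu)\|_{L^2(\hat\gamma)}\leq Ch\|\widehat\mu\|_{L^2(\hat\gamma)}$ directly via Leibniz' rule and an inverse inequality, whereas the paper cites Wahlbin's super-approximation result together with standard spline approximation estimates.
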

\begin{proof}
After a change of variable, the integral over the physical boundary can be expressed as a weighted integral over the parametric space. The proof is based on a super-approximation of the product of the dual variable with the weight. In contrast to the previous proofs, we do not need to distinguish between the cases $d=2$ and $3$.

We recall the transformation of the integral onto the parametric space (\ref{change_of_variable})
 \begin{align*}
\int_{\gamma} \mu \,w~\mathrm{d}\sigma =
\int_{\widehat \gamma} \widehat \mu \,\widehat w \,\rho  ~\mathrm{d}x,
\end{align*}
where $ \rho =  (\widehat{DW})^{-1} \,\left| \det \nabla_\gamma \mathbf{F}_{s(l)} \right|$  is uniformly bounded by above and below, fulfills $ \rho \in C^{p-2}(\hat \gamma)$ and is $h$-independent. We also note the norm equivalence
 \begin{align}\label{eq:weighted_norm_equivalence}
 C^{-1} \| \widehat v\|_{L^2(\hat \gamma)} \leq \|  \rho \widehat v\|_{L^2(\hat \gamma)} \leq C \| \widehat v\|_{L^2(\hat \gamma)}.
 \end{align}

Let $\Pi: L^2(\widehat \gamma) \rightarrow S^{p-2}(\widehat \gamma)$ denote any local projection with best approximation properties, e.g,~\cite[Equation 37]{bazilevs:06}, the following super-approximation holds
 \begin{align} \label{eq:approximation_mu_rho}
\|\widehat \mu  \rho - \Pi(\widehat \mu  \rho)\|_{L^2(\hat \gamma)} \leq C h \|\widehat \mu \|_{L^2(\hat \gamma)}.
\end{align}The proof of the super-approximation given in~\cite[Theorem 2.3.1]{wahlbin:95} can be easily extended to the isogeometric setting using the standard approximation results for splines, see~\cite{bazilevs:06}.

Then, for $\mu = \widehat \mu \circ \mathbf{F}_{s(l)}^{-1}$, we choose $\widehat w_{\hat \mu \rho} \in S^p_0(\widehat \gamma)$, such that \[\frac{\int_{\widehat \gamma} {\widehat{w}_{\hat \mu \rho}}  \Pi( \widehat \mu \rho) ~\mathrm{d}x }{\|\widehat{w}_{\hat \mu \rho} \|_{L^2(\gamma)}} \geq C \| \Pi( \widehat \mu \rho) \|_{L^2(\hat \gamma)}.\] 
We replace in the inf-sup integral the term $\widehat \mu  \rho$ by its projection, use the super-approximation and the norm equivalence (\ref{eq:weighted_norm_equivalence}) to obtain:
\begin{align*}
\sup_{{w} \in W_{l,h}} \frac{\int_{\gamma} \mu \,w~\mathrm{d}x }{\|{w} \|_{L^2(\gamma)}} &\geq  C~
\sup_{{\widehat w} \in S^p_0(\hat \gamma)} \frac{\int_{\widehat \gamma} \widehat w \,\widehat \mu \rho ~\mathrm{d}x }{\|\widehat{w} \|_{L^2(\hat \gamma)}} \\&=
 C~ \frac{\int_{\widehat \gamma} \widehat w_{\hat \mu \rho} \,\Pi( \widehat \mu \rho) ~\mathrm{d}x }{\|\widehat{w}_{\hat \mu \rho} \|_{L^2(\hat \gamma)}} +  C\frac{ \int_{\widehat \gamma} \widehat w_{\hat \mu \rho} (\widehat \mu \rho- \Pi( \widehat \mu \rho) )~\mathrm{d}x }{\|\widehat{w}_{\hat \mu \rho} \|_{L^2(\hat\gamma)}} \\
  &\geq C \| \Pi( \widehat \mu \rho) \|_{L^2(\hat \gamma)} - C\|\widehat \mu \rho- \Pi( \widehat \mu \rho) \|_{L^2(\hat \gamma)} \\&\geq  C \| \Pi( \widehat \mu \rho) \|_{L^2(\hat \gamma)} - C' h \|\widehat \mu \rho\|_{L^2(\hat \gamma)}.
\end{align*}
Now, we use the approximation result~(\ref{eq:approximation_mu_rho}) and the norm equivalence (\ref{eq:weighted_norm_equivalence}) to bound $\| \Pi( \widehat \mu \rho) \|_{L^2(\hat \gamma)}$:
\begin{align*}
\|  \widehat \mu   \|_{L^2(\hat \gamma)} \leq \| \Pi( \widehat \mu \rho) \|_{L^2(\hat \gamma)} + \| \Pi( \widehat \mu \rho) - \widehat \mu \rho\|_{L^2(\hat \gamma)} \leq \| \Pi( \widehat \mu \rho) \|_{L^2(\hat \gamma)} + C'' h \|  \widehat \mu   \|_{L^2(\hat \gamma)},
\end{align*}
which shows $\| \Pi( \widehat \mu \rho) \|_{L^2(\hat \gamma)}\geq C\|  \widehat \mu  \|_{L^2(\hat \gamma)}$ for sufficiently small $h$.  Then standard norm equivalences show the inf-sup condition in the physical domain.
\end{proof}

\begin{remark}
An analogue proof shows the stability of a pairing of order $p$ and $p-2k\geq 0$ for $k\in \mathbb{N}$. However, for $k>1$ the dual approximation order in the $L^2$ norm $p-2k$ is very low and will reduce the convergence order drastically, i.e., to $p-2k+3/2$. 
Since for Signorini and contact problems, the regularity of the solution is usually bounded by $H^{5/2-\varepsilon}(\Omega)$, see, e.g.~\cite{moussaoui:92}, low dual degrees might be reasonably used in these cases. 
\end{remark}

\subsection{Choice 3: stable p/p pairing with boundary modification}\label{sec:p_p}
The first two choices had been motivated by Assumptions~\ref{as:mortar_intro:inf_sup}  and~\ref{as:mortar_intro:approximation_order}. While the choice 1 does not yield uniformly stable pairings, the choice 2 does not guarantee optimal order $p$ convergence. Thus consider the natural equal order pairing in more details. In the finite element context, it is well-known that the simple choice of taking the space of Lagrange multiplier as the space of traces from the slave side yields to troubles at the so-called cross points for $d=2$ and wirebaskets for $d=3$, i.e., $(\bigcup_{l\neq j} \partial \gamma_l \cap \partial \gamma_j) \cup (\bigcup_{l} \partial \gamma_l\cap \partial \Omega_D)$. As a remedy, in the finite element method a modification is performed, see~\cite{ben_belgacem:97, wohlmuth:01}. We adapt this strategy to isogeometric analysis, thus a modification of the dual spaces is performed to ensure at the same time accuracy, see Assumption~\ref{as:mortar_intro:approximation_order}, and stability, see Assumption~\ref{as:mortar_intro:inf_sup}. This modification results in a reduction of dimension of the dual space such that a counting argument for the dimensions still holds. Roughly speaking there are two possibilities: in the first case, the mesh for the Lagrange multiplier is coarsened locally in the neighborhood of the cross point (wirebasket), and in the second case the degree is reduced in the neighborhood of the cross point (wirebasket). Here we only consider the second possibility.
 
Let us start the construction for the univariate case ($d=2$), since the construction for the bivariate case ($d=3$) can be done as a tensor product. Given an open knot vector and the corresponding B-Spline functions $\widehat{B}_i^p$. We define the modified basis $\widetilde B_i^p$, $i=2,\,\ldots,\,n-1$ as follows
\begin{align*}	
\widetilde{B}_i^p(\zeta)  =\begin{cases}
\widehat{B}_i^p(\zeta) + \alpha_i \widehat{B}_1^p(\zeta),\quad &i \in \{2,\,\ldots,\,p+1\}, \\
\widehat{B}_i^p(\zeta), \quad& i\in \{p+2,\,\ldots,\, n-p-1\},\\
\widehat{B}_i^p(\zeta) + \beta_i \widehat{B}_n^p(\zeta), \quad &i \in \{n-p, n-1\}.
\end{cases}
\end{align*}
The coefficients $ \alpha_i$ and $\beta_i$ are chosen such that the basis function is a piecewise polynomial of degree $p-1$ on the corresponding element while retaining the inter-element continuity on $\widehat{\gamma}$, i.e., as 
\begin{align*}
\alpha_i = - \widehat{B}_i^{p \, {(p)}}(\zeta) / \widehat{B}_1^{p \, {(p)}}(\zeta), \quad \zeta \in (0,\zeta_{2}),\\
\beta_i = - \widehat{B}_i^{p \, {(p)}}(\zeta) / \widehat{B}_n^{p\,{(p)}}(\zeta), \quad \zeta \in  (\zeta_{E-1}, 1).
\end{align*}
An example for degree $p=3$ is shown in Figure~\ref{fig:cp_mod_basis_p3}.
Note that $\widehat{B}_i^p$ is a polynomial of degree $p$ on one single element, so the coefficients are well-defined and constant. Since derivatives of B-Spline functions are a combination of lower order B-Spline functions, a recursive algorithm for the evaluation exists, see~\cite[Section 2.1.2.2]{hughes:09}. Using the recursive formula it can easily be seen that the coefficients are uniformly bounded under the assumption of quasi-uniform meshes. 
We  define the space of Lagrange multipliers of the same order as the primal basis, as $\widehat{M}^0 = \spann_{2,\,\ldots,\, n-1} \{ \widetilde{B}_{i}^{p} \}$.
The construction guarantees that the resulting basis forms a partition of unity.
 
\begin{figure}[htbp]
\begin{center}
\includegraphics[width=0.7\textwidth]{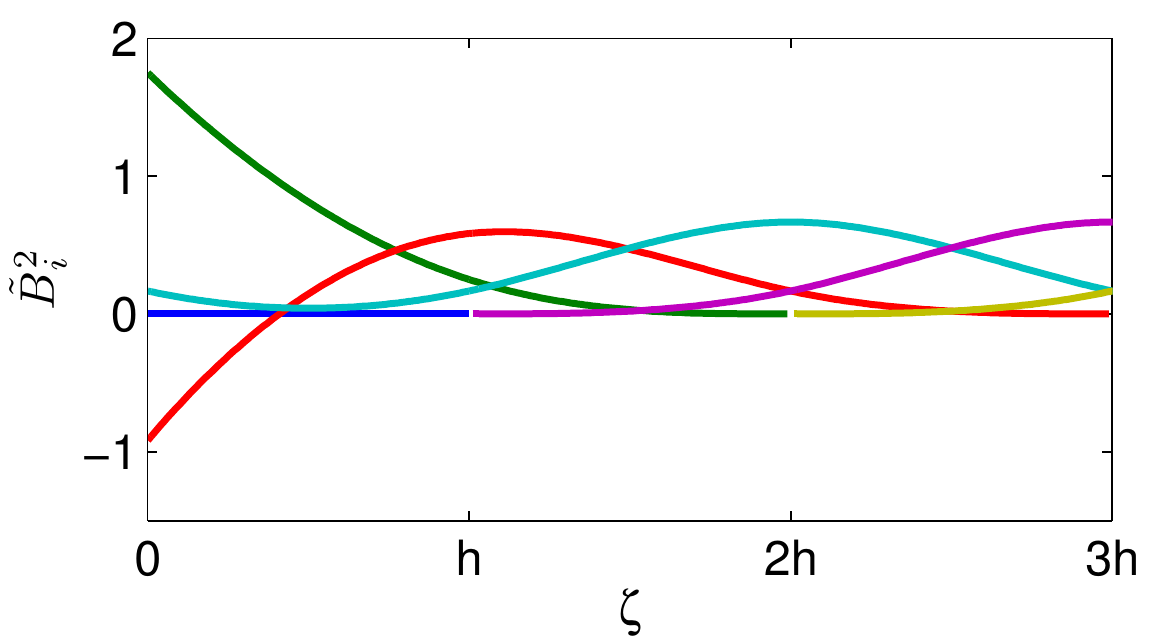}
 \end{center}
 \caption{Boundary modification of a spline of degree $3$ for $d=2$, left modification. }
 \label{fig:cp_mod_basis_p3}
 \end{figure}

 \begin{theorem} \label{th:cm_approx}
 Assumption~\ref{as:mortar_intro:approximation_order} holds for the dual space $\widehat{M}^0$.
 \end{theorem}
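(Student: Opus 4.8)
The plan is to establish Assumption~\ref{as:mortar_intro:approximation_order} with $\eta(l)=p$, which is precisely the value needed for the equal-order pairing to reach the optimal rate $p$. As in the proof of Theorem~\ref{thm:inf_sup_physical}, the first step is to pull everything back to the parametric face. Writing $\widehat\lambda=\lambda\circ\mathbf{F}_{s(l)}$ and $\mu=\widehat\mu\circ\mathbf{F}_{s(l)}^{-1}$ with $\widehat\mu\in\widehat{M}^0$, Assumption~\ref{as:regularity_mapping} (bi-Lipschitz, piecewise $C^\infty$) together with the boundedness above and below of $\det\nabla_{\hat\gamma}\mathbf{F}_{s(l)}$ yields $\|\lambda-\mu\|_{L^2(\gamma_l)}\le C\|\widehat\lambda-\widehat\mu\|_{L^2(\hat\gamma)}$ and $\|\widehat\lambda\|_{H^{p}(\hat\gamma)}\le C\|\lambda\|_{H^{p}(\gamma_l)}$ (note that, in contrast to Theorem~\ref{thm:inf_sup_physical}, no weight enters here since we are approximating $\lambda$ itself, not testing the form $b(\cdot,\cdot)$). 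Hence it suffices to prove the parametric estimate $\inf_{\widehat\mu\in\widehat{M}^0}\|\widehat\lambda-\widehat\mu\|_{L^2(\hat\gamma)}\le Ch^{p}\|\widehat\lambda\|_{H^{p}(\hat\gamma)}$ for $\widehat\lambda\in H^{p}(\hat\gamma)$, with $C$ independent of $h$.

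For the parametric estimate I would first record the algebraic fact that $\mathbb{P}_{p-1}\subset\widehat{M}^0$. By the very choice of $\alpha_i,\beta_i$, the restriction of any element of $\widehat{M}^0$ to the first and last element is a polynomial of degree at most $p-1$; conversely, if $q\in\mathbb{P}_{p-1}$ has B-spline coefficients $(q_i)_{i=1}^{n}$ with respect to $\{\widehat{B}_i^{p}\}$, then the vanishing of $q^{(p)}$ on the two boundary elements forces $q_1=\sum_{i=2}^{p+1}\alpha_i q_i$ and $q_n=\sum_{i=n-p}^{n-1}\beta_i q_i$, which is exactly the relation making $q=\sum_{i=2}^{n-1}q_i\widetilde{B}_i^{p}$. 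The same computation shows that if $Q\colon L^2(\hat\gamma)\to S^{p}(\hat\gamma)$, $Q\widehat{v}=\sum_i\gamma_i(\widehat{v})\widehat{B}_i^{p}$, is a standard local, $L^2$-stable, $S^{p}$-reproducing quasi-interpolant (see~\cite{bazilevs:06}), then $\widetilde{Q}\widehat{v}:=\sum_{i=2}^{n-1}\gamma_i(\widehat{v})\widetilde{B}_i^{p}\in\widehat{M}^0$ reproduces $\mathbb{P}_{p-1}$, is still local because $\operatorname{supp}\widetilde{B}_i^{p}=\operatorname{supp}\widehat{B}_i^{p}$, and is $L^2$-stable uniformly in $h$ since under Assumption~\ref{as:uniform_mesh} the coefficients $\alpha_i,\beta_i$ are bounded independently of $h$ (as already noted from the recursive B-spline derivative formula).

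With $\widetilde{Q}$ at hand the estimate follows from a standard Bramble--Hilbert / Dupont--Scott argument: on each parametric element $K$ with patch $\widetilde{K}$ (of diameter $\simeq h$ and shape regular by Assumptions~\ref{as:regularity_mapping}--\ref{as:uniform_mesh}) choose the averaged Taylor polynomial $q\in\mathbb{P}_{p-1}$ of $\widehat\lambda$ on $\widetilde{K}$ and use $\widetilde{Q}q=q$ on $K$ together with the locality and stability of $\widetilde{Q}$ to obtain $\|\widehat\lambda-\widetilde{Q}\widehat\lambda\|_{L^2(K)}=\|(I-\widetilde{Q})(\widehat\lambda-q)\|_{L^2(K)}\le C\|\widehat\lambda-q\|_{L^2(\widetilde{K})}\le Ch^{p}|\widehat\lambda|_{H^{p}(\widetilde{K})}$; summing over $K$ with the finite-overlap property gives the claimed $h^{p}$ bound, which the reduction above transfers to $\gamma_l$. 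The case $d=3$ is handled verbatim, replacing $\widehat{M}^0$ by its tensor-product version and $\widetilde{Q}$ by the corresponding tensor-product quasi-interpolant, which reproduces $\mathbb{P}_{p-1}$ (in fact all of $\mathbb{Q}_{p-1,p-1}$) and remains local and uniformly stable. The main point requiring care is exactly this uniform stability and local $\mathbb{P}_{p-1}$-reproduction of the modified operator after the boundary correction — both hinge on the uniform boundedness of $\alpha_i,\beta_i$ and on the coefficient identity above; note also that the degree reduction on the two boundary elements caps the attainable order at $p$, but this is precisely the target value $\eta(l)=p$.
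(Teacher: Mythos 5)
Your proposal is correct and follows essentially the same route as the paper: the paper's proof simply observes that the global polynomials of degree $p-1$ are contained in $\widehat{M}^0$ and then invokes the standard quasi-interpolation arguments of~\cite[Section 3]{bazilevs:06}, which is exactly what you carry out in detail (your coefficient identity justifying $\mathbb{P}_{p-1}\subset\widehat{M}^0$ and the modified, local, uniformly stable quasi-interpolant are the spelled-out version of that citation). The only difference is the level of detail, not the argument.
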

\begin{proof}
Since the space of global polynomials of degree $p-1$ is contained in the dual space $\widehat{M}^0$, we can directly argue as in~\cite[Section 3]{bazilevs:06}.
\end{proof}

\subsection{Stability for the three choices} \label{sec:physical_domain}
Finally hereafter, we summarize the results for the three pairings considered:  
\begin{itemize}
\item the pairing $p/p-1$  satisfies the necessary convergence order $p$ in the $L^2$ norm, Assumption~\ref{as:mortar_intro:approximation_order}, but it does not fulfill Assumption~\ref{as:mortar_intro:inf_sup}. As a result, Theorem~\ref{thm:convergence_rates} cannot be applied and no optimal convergence can be expected.
\item the pairing $p/p-2$ fulfills Assumption~\ref{as:mortar_intro:inf_sup} and Assumption~\ref{as:mortar_intro:approximation_order}, hence this choice yields an order $p-1/2$ convergence by Theorem~\ref{thm:convergence_rates}.

\item the pairing $p/p$ cannot satisfy Assumption~\ref{as:mortar_intro:inf_sup} without a crosspoint modification. We propose a modification based on a local degree reduction at the boundary of the interface and show the uniform inf-sup stability numerically. And obviously it ensures Assumption~\ref{as:mortar_intro:approximation_order}, hence Theorem~\ref{thm:convergence_rates} guarantees  an optimal convergence order $p$. 
\end{itemize}

\section{Numerical results}
\label{chapter5}
In this section, we apply the proposed mortar method to five examples, in order to validate its optimality and enlighten some additional practical aspects. All our numerical results were obtained on a Matlab code, using GeoPDEs,~\cite{geopdes:11}. 
Previous to the examples, we numerically evaluate the inf-sup constants for the considered spaces, and also for further choices of even lower degree. The first example is a multi-patch NURBS geometry with a curved interface, for which the computed $L^2$ and broken $V$ rates are optimal. 
The second example is a re-entrant corner, where we investigate, whether the presence of a singularity disturbs the proposed mortar method. Since the results are as expected, it can be said that the singularity does not have a large influence on the proposed coupling. An interface problem with jumping coefficients is considered as a third example, since for these problems domain decomposition methods are very attractive.
Although NURBS are capable of exactly representing many geometries, it is not always possible to have a matching interface between subdomains. For this reason in the fourth example, we introduce an additional variational crime by a geometry approximation. It can be seen, that the proposed method is robust with respect to a non-matching interface. The last example is a problem of linear elasticity and it is shown that the mortar method behaves as well as for scalar problems.

%
%
\subsection{A numerical evaluation of the inf-sup condition} \label{numerics:subsection_chappelle} 
We consider one  subdomain $\Omega_k$ resulting from the identity mapping of the unit square and assume that its mesh is uniformly refined. We identify  elements in $M_{l,h}$ and $W_{l,h}$ with its algebraic vector representation. Then the inf-sup condition, on one interface $\gamma_l$, reads
\begin{equation}\label{eq:numerics:inf-sup_test}     
         \inf_{\mu \in \mathbb{R}^{n'} }  \sup_{v\in \mathbb{R}^n} 
\frac {{\mu^\top} {G}  \,{v} }
         {\left( {\mu^\top}{S} {\mu} \right)^{1/2} \left( {v^\top} {T} {v} \right)^{1/2} }
         \geq C > 0,
\end{equation}   
where $n' = \dim M_{l,h}$ and $n = \dim W_{l,h}$ and $G, S, T$ denote the $L^2$ inner product matrices. 
Here we use the technique of Chapelle and Bathe,~\cite{chapelle:93}, to verify our theoretical results on the inf-sup stability.
The  proof of this approach can be found in~\cite[Chapter 3]{brezzi:13}.

The $h$-dependency of the inf-sup condition was studied first for primal spaces without any Dirichlet boundary condition and with homogeneous conditions. Precisely, primal spaces are either $\{v_{|\gamma_l}, v \in V_{{s(l)},h}\}$ or $\{v_{|\gamma_l}, v \in V_{{s(l)},h}\} \cap H^1_{0}(\gamma_l)=W_{l,h}$, and dual spaces are $\{\mu = \widehat{\mu} \circ \mathbf{F}_{s(l)}^{-1}, \widehat{\mu} \in \widehat S^p\}$ or $\{\mu = \widehat{\mu} \circ \mathbf{F}_{s(l)}^{-1}, \widehat{\mu} \in \spann_{2,\,\ldots,\, n-1} \{ \widetilde{B}_{i}^{p} \}$ for same degree pairings as it is necessary to consider a boundary modification.

\begin{figure}[htbp]   
	\includegraphics[width=0.5\textwidth]{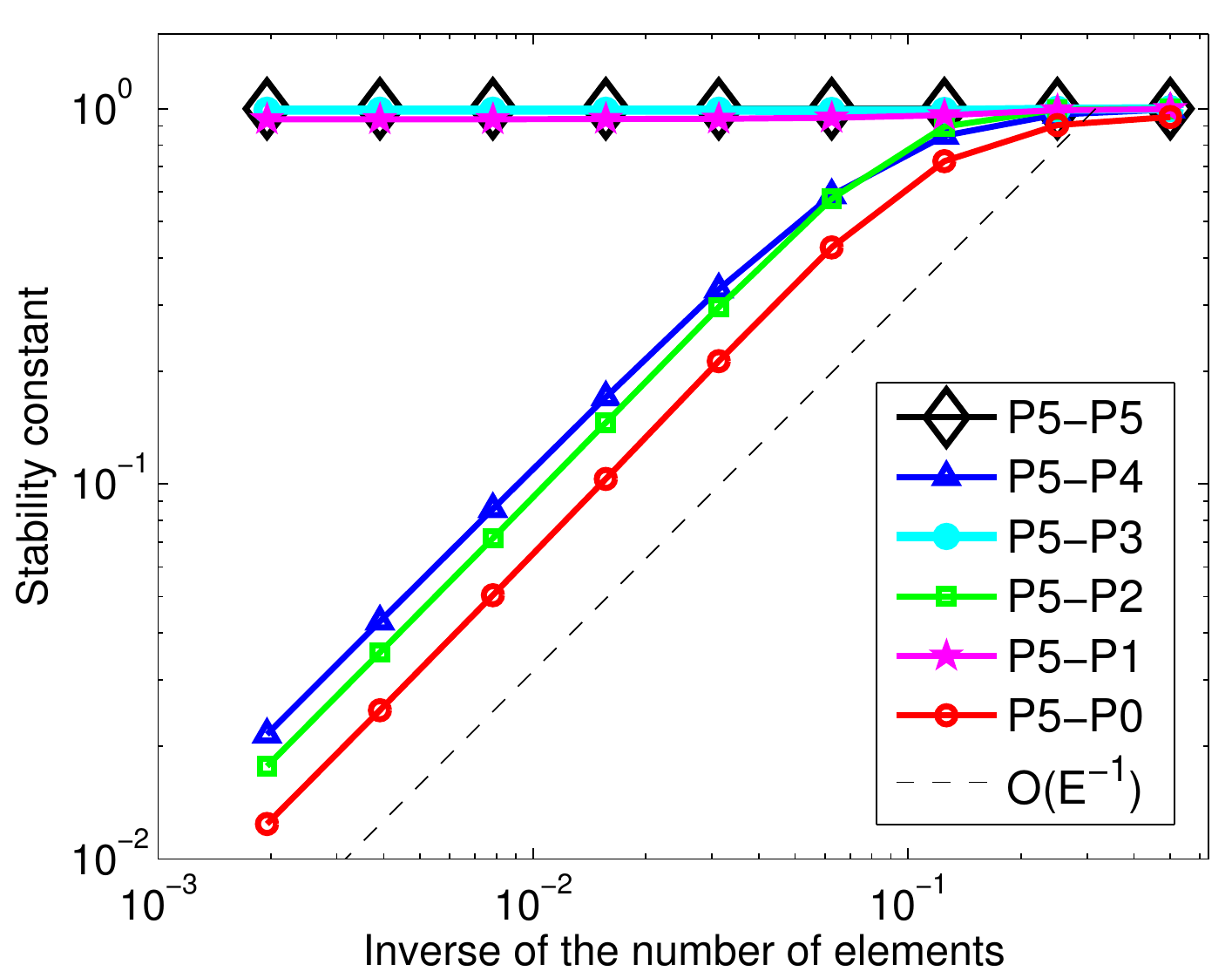}\,
	\includegraphics[width=0.515\textwidth]{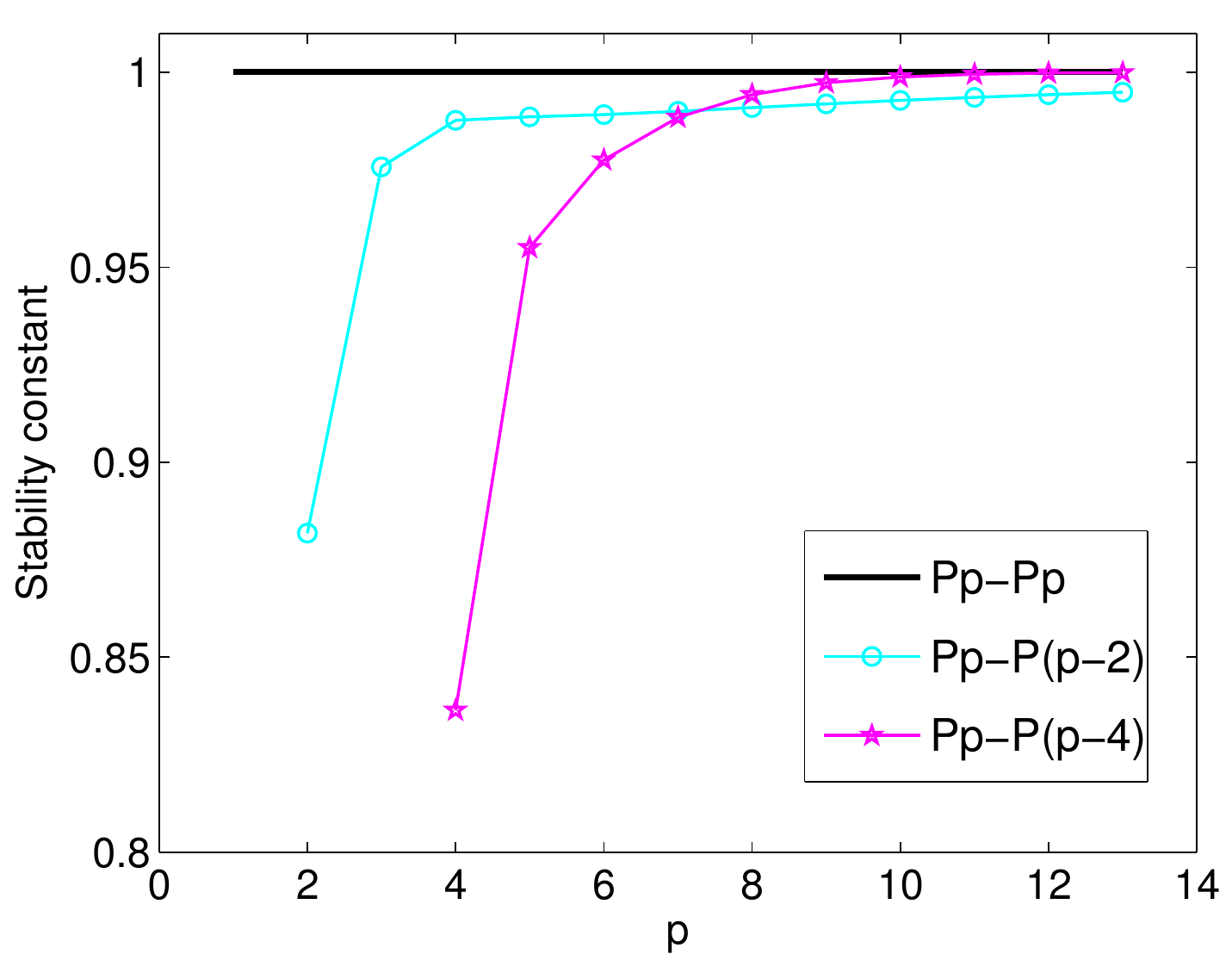} 
	\includegraphics[width=0.5\textwidth]{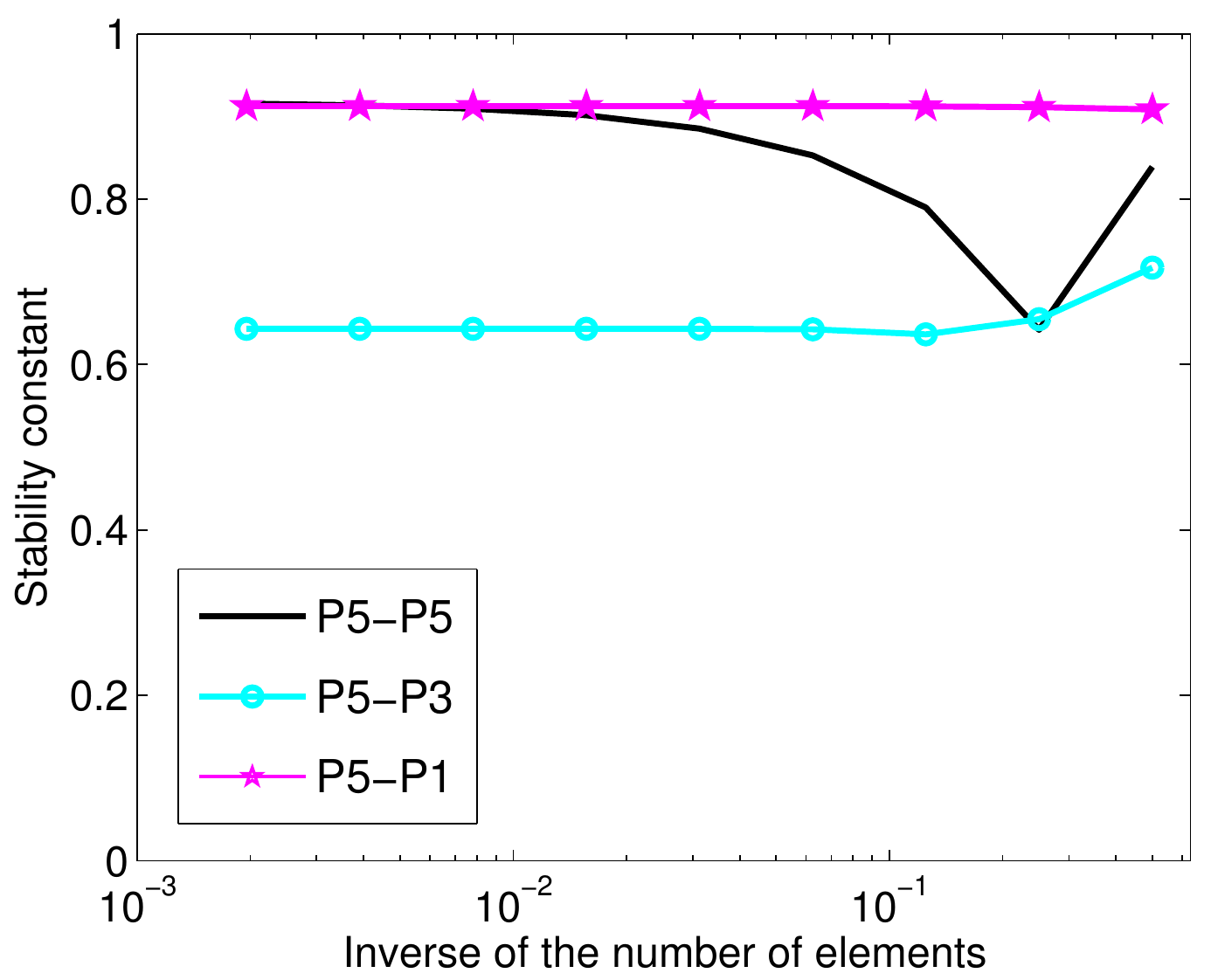}\,
	\includegraphics[width=0.515\textwidth]{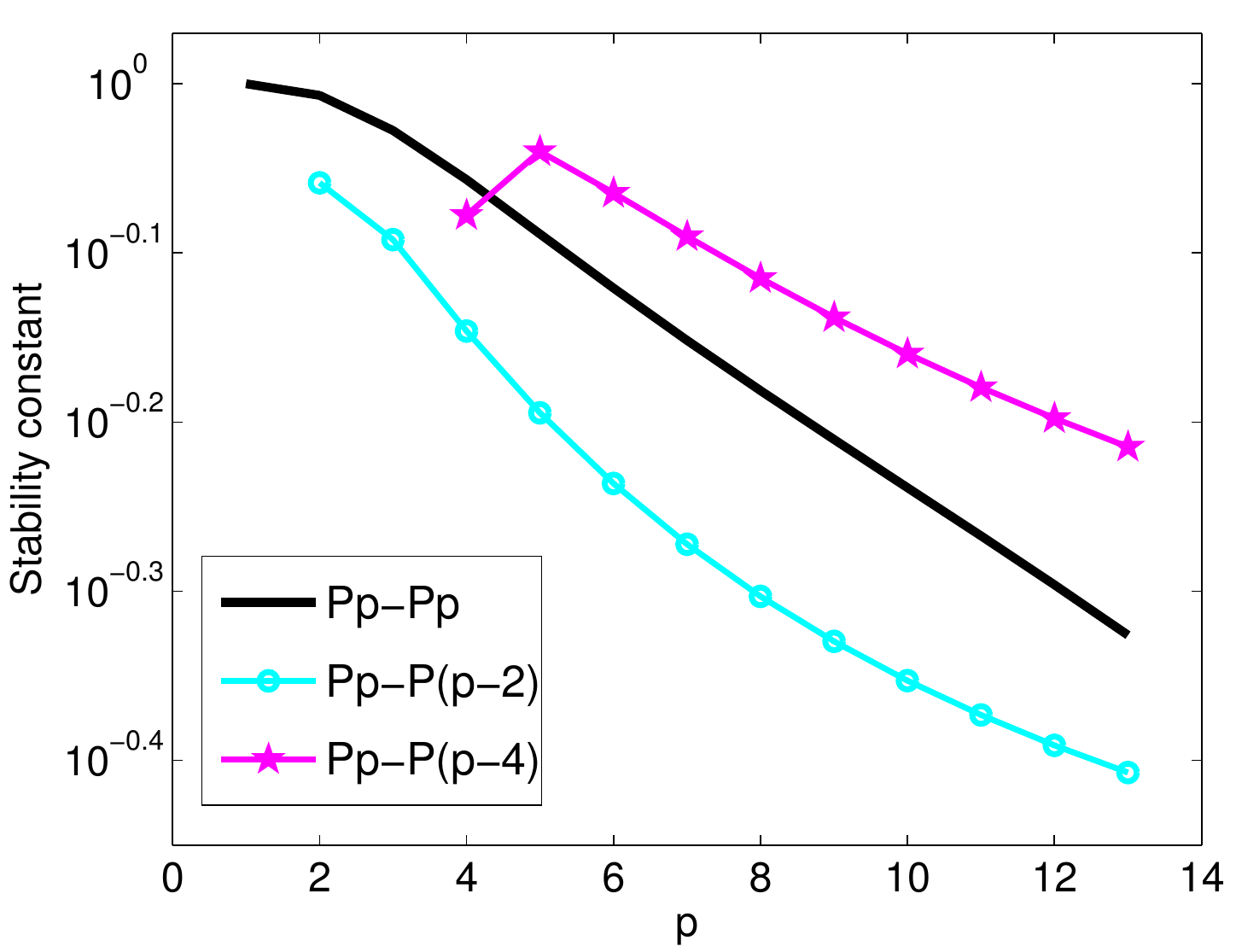}
	\caption{Problem of Subsection~\ref{numerics:subsection_chappelle} - Left: $h$-dependency for pairing $P5/Pp$ $(p=0,\,\ldots,\,5)$. Right: $p$-dependency. Top: primal spaces without boundary condition. Bottom: primal spaces with homogeneous boundary conditions. }
	\label{fig:numerics:inf-sup_test_results}
\end{figure}

This study leads us to the following conclusion: the inf-sup condition is satisfied for couples of the same parity, see Figure~\ref{fig:numerics:inf-sup_test_results} for the pairings of primal degree $p=5$. Moreover regarding the $p$-dependence, a reasonable behavior has been observed for primal space without boundary condition, whereas an exponential behavior has been found for primal space with boundary conditions, see Figure~\ref{fig:numerics:inf-sup_test_results}. 

Comparing  the  three  stable  pairings  of  the  top  right  picture  of  Figure~\ref{fig:numerics:inf-sup_test_results},  we  note  that,  although  the  dual  dimension  decreases,  the  stability  constant  gets  smaller  with  a  lower  dual  degree.  Once  more,  this  shows  that  the  inf-sup  condition  is  not  only  a  matter  of  dimensions  of  the  spaces,  especially  for  splines  for  which  the  spaces  are  not  nested  in  general.  
We also note, that considering homogeneous Dirichlet conditions, the stability constant for the case $P5/P3$ is less than for the other cases. However, the difference is quite small and should not lead to any remarkable effect.
%
%
%
%
\subsection{A scalar problem on a multi-patch NURBS domain} \label{numerics:subsection_annulus}
Let us consider the standard Poisson equation $-\Delta u =f $, solved on the domain $\Omega = \{(r,\varphi)$, $0.2<r<2$, $0<\varphi<\pi/2\}$ which is given in polar coordinates. The domain is decomposed into two patches, which are presented in Figure~\ref{fig:numerics:annulus_setting_cv_curve}. The internal load and the boundary conditions have been manufactured to have the solution $u(x,y)=\sin(\pi x) \sin(\pi y)$, given in Cartesian coordinates. To test the same degree pairing, we consider a case such that no boundary modification is required. This can be granted by setting Neumann boundary conditions on $\partial \Omega_N = \{(r, \varphi)$, $0.2<r<2$, $\varphi\in \{0, \pi/2\}\}$ and Dirichlet boundary conditions on $\partial \Omega \backslash \partial \Omega_N$.
\begin{figure}[htbp]  
\centering
	\includegraphics[width=0.4\textwidth]{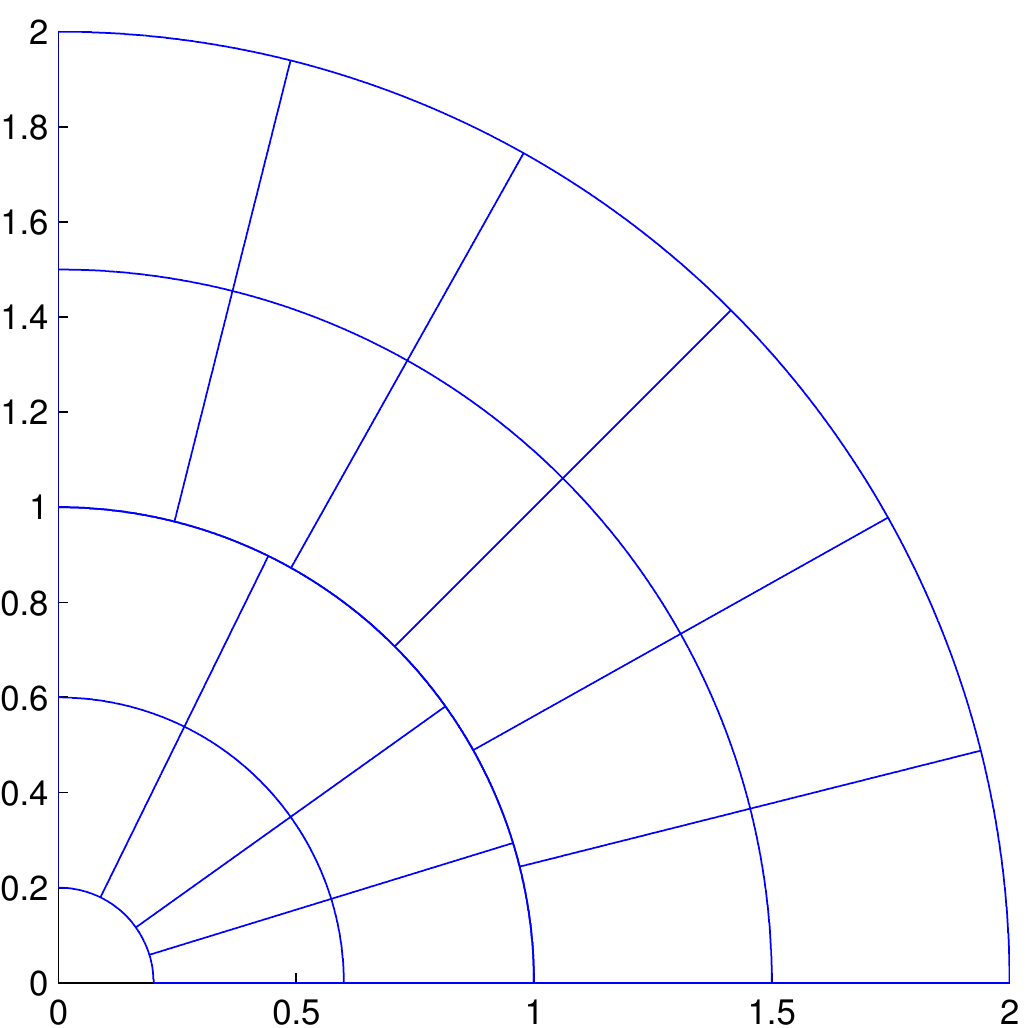}\,\,
	\includegraphics[width=0.4\textwidth]{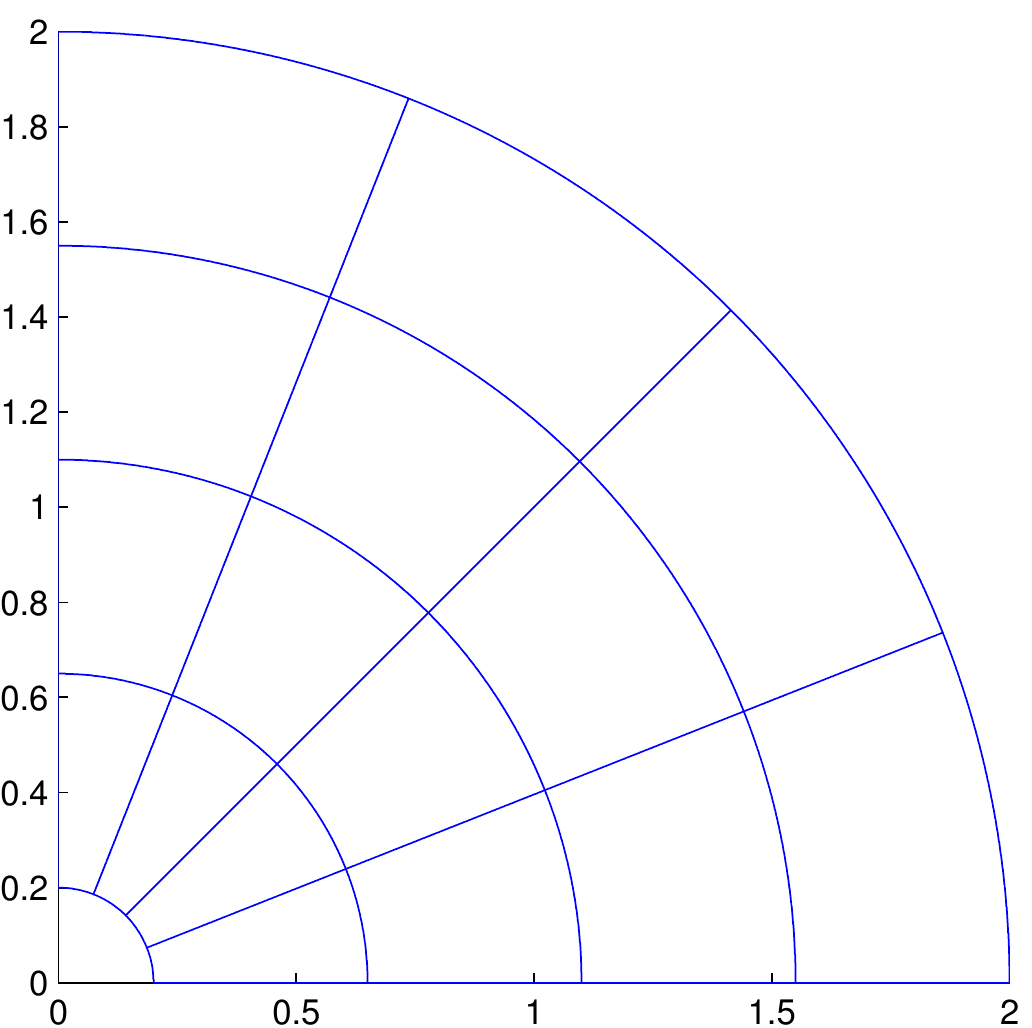}
	\caption{Problem of Subsection~\ref{numerics:subsection_annulus} - Left: a non-conforming mesh. Right: a conforming mesh. }
	\label{fig:numerics:annulus_setting_cv_curve}
\end{figure}
\begin{figure}[htbp]  
\centering
	\includegraphics[width=0.45\textwidth]{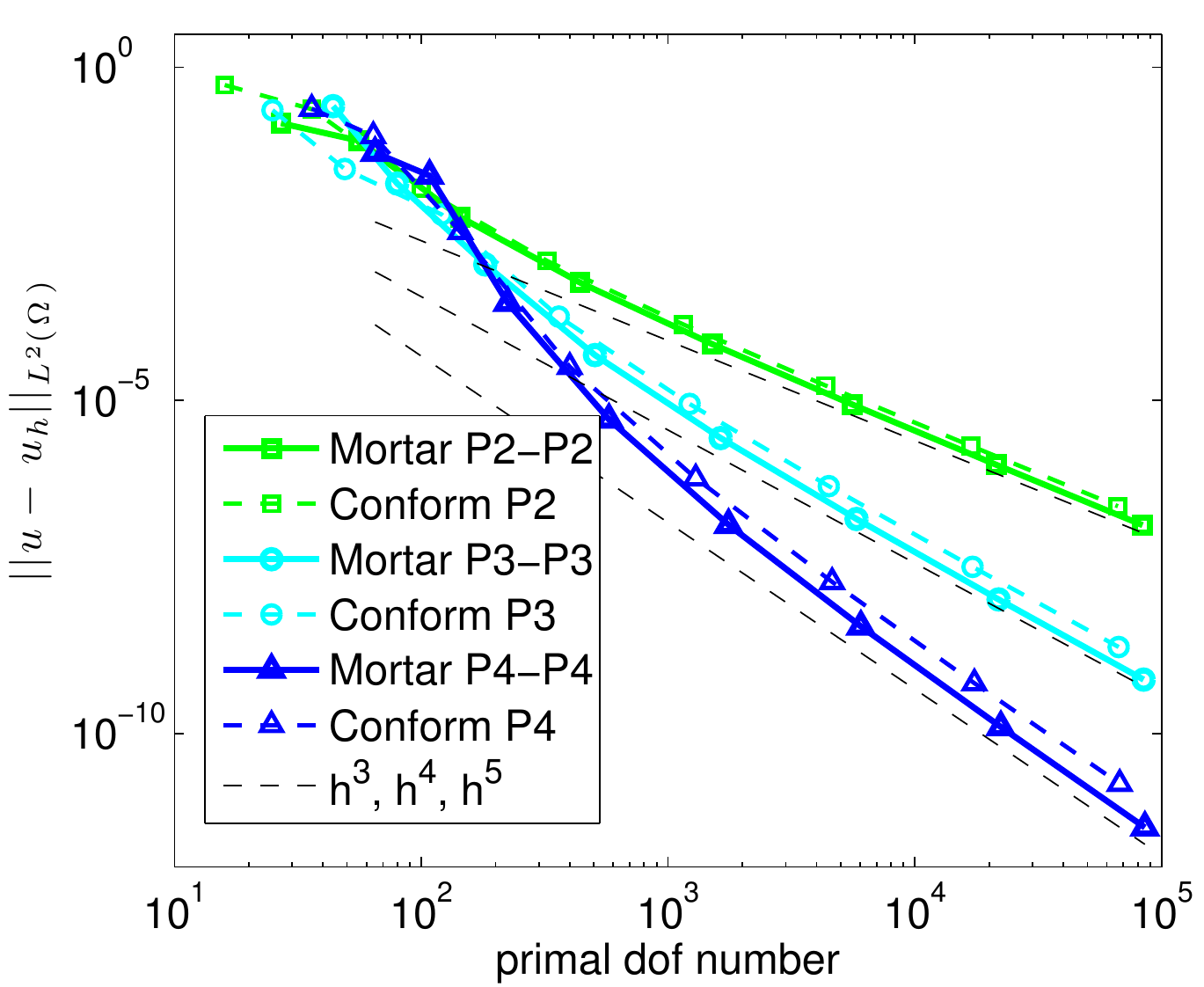}
	\includegraphics[width=0.45\textwidth]{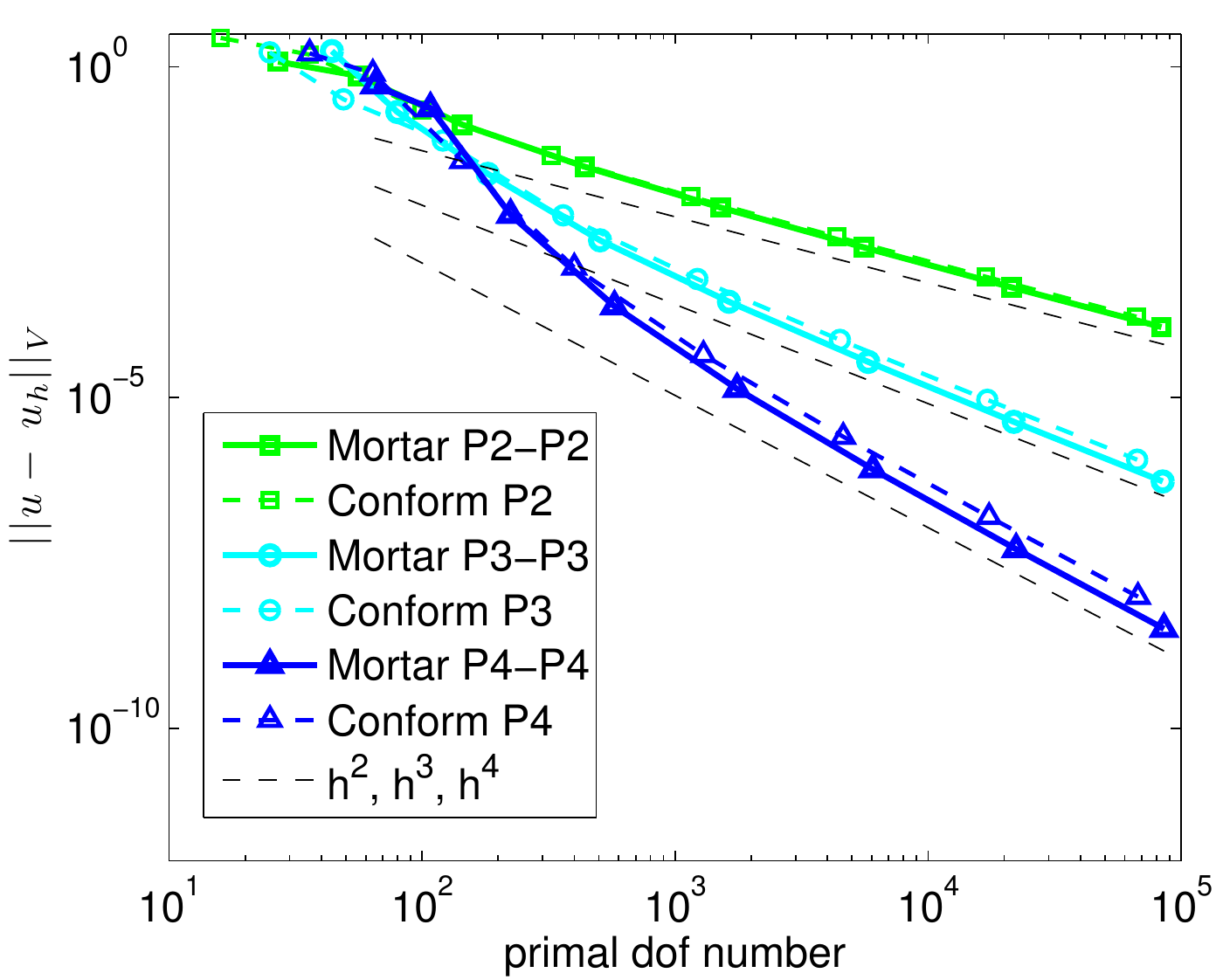}
	\caption{Problem of Subsection~\ref{numerics:subsection_annulus} - $L^2$ (left) and broken $V$ (right) primal error curves for same degree pairings.}
	\label{fig:numerics:annulus_results_cv_curve}
\end{figure}

In Figure~\ref{fig:numerics:annulus_results_cv_curve}, we show the numerically obtained error decay in the $L^2$ and the broken $V$ norm  for the primal variable and $p=2,3,4$. As expected from the theory, for an equal order $p$ pairing we observe a convergence order of $p+1$ for the $L^2$ error. We also compare the error of a matching and non-matching mesh situation and recall that in the matching case we are within the standard conforming setting. As Figure~\ref{fig:numerics:annulus_results_cv_curve} shows, no significant quantitative difference can be observed. Note that the comparison is based on results issued from similar meshes not from similar control point repartition, see Figure~\ref{fig:numerics:annulus_setting_cv_curve}. In Table~\ref{tab:annulus}, the numerically computed order of the $L^2$ error decay is given. Asymptotically, the optimal order of $p+1$ is obtained in each refinement step.
\begin{table}[htbp]
\small
\begin{tabular}{|c|cc|cc|cc|cc|}
\cline{2-7}
 \multicolumn{1}{c|}{} & \multicolumn{2}{ c| }{$P2-P2$} &  \multicolumn{2}{ c| }{$P3-P3$}  & \multicolumn{2}{ c| }{$P4-P4$} \\ 
 \hline
level         & error value & slope  
	        & error value & slope 
	        & error value & slope
\\
0 & 1.445757e-01 &   | & 2.603045e-01 & | & 5.221614e-02 & | \\ 
1 & 7.871436e-02 & 0.877 & 1.799185e-02 & 3.855 & 2.373889e-02 & 1.137 \\ 
2 & 5.651043e-03 & 3.800 & 1.100586e-03 & 4.031 & 2.897823e-04 & 6.356 \\ 
3 & 5.904159e-04 & 3.259 & 4.794994e-05 & 4.521 & 5.162404e-06 & 5.811 \\ 
4 & 7.021278e-05 & 3.072 & 2.719572e-06 & 4.140 & 1.361467e-07 & 5.245 \\ 
5 & 8.663724e-06 & 3.019 & 1.661382e-07 & 4.033 & 4.059923e-09 & 5.068 \\ 
6 & 1.079348e-06 & 3.005 & 1.033782e-08 & 4.006 & 1.253044e-10 & 5.018 \\ 
7 & 1.347999e-07 & 3.001 & 6.458495e-10 & 4.001 & 3.902800e-12 & 5.005 \\ 
\hline
\end{tabular} 
\caption{Problem of Subsection~\ref{numerics:subsection_annulus} - $|| u - u_h ||_{L^2 (\Omega)}$ and its estimated order of convergence.}
\label{tab:annulus}
\end{table}

%
\subsection{A singular scalar problem}\label{numerics:subsection_singular}
Let us now consider the Laplace equation $-\Delta u =0 $, solved on a non-convex domain with a re-entrant corner $\Omega$ decomposed into three patches, presented in Figure~\ref{fig:numerics:singular_mesh}. We need to precise for this example the mortar geometry setting. The patches are enumerated from 1 to 3 from the left to the right. We set the interface 1, as the interface between the subdomain 1 and 3, the interface 2 between 2 and 3 and the interface 3 between 1 and 2, see Figure~\ref{fig:numerics:singular_mesh}.
The singular function associated to a re-entrant corner with Dirichlet condition is given by $ r^{2/3}\sin(2/3\varphi)$, see \cite{grisvard:11}. We consider this singular case, which can be granted by setting all the boundary of $\Omega$ as a Dirichlet boundary with the value $ r^{2/3}\sin(2/3\varphi)$.

The order of the numerical method is bounded by the singularity. Standard techniques to obtain better convergence rates include the use of graded meshes, \cite{apel:96}, and $hp$-refinement,~\cite{schwab:98,buffa:14}. 
Here we do not wish to improve these rates, but to test if the proposed mortar method is disturbed by the presence of a singularity.  
 \begin{figure}[htbp] 
 	\centering
	\includegraphics[width=0.55\textwidth]{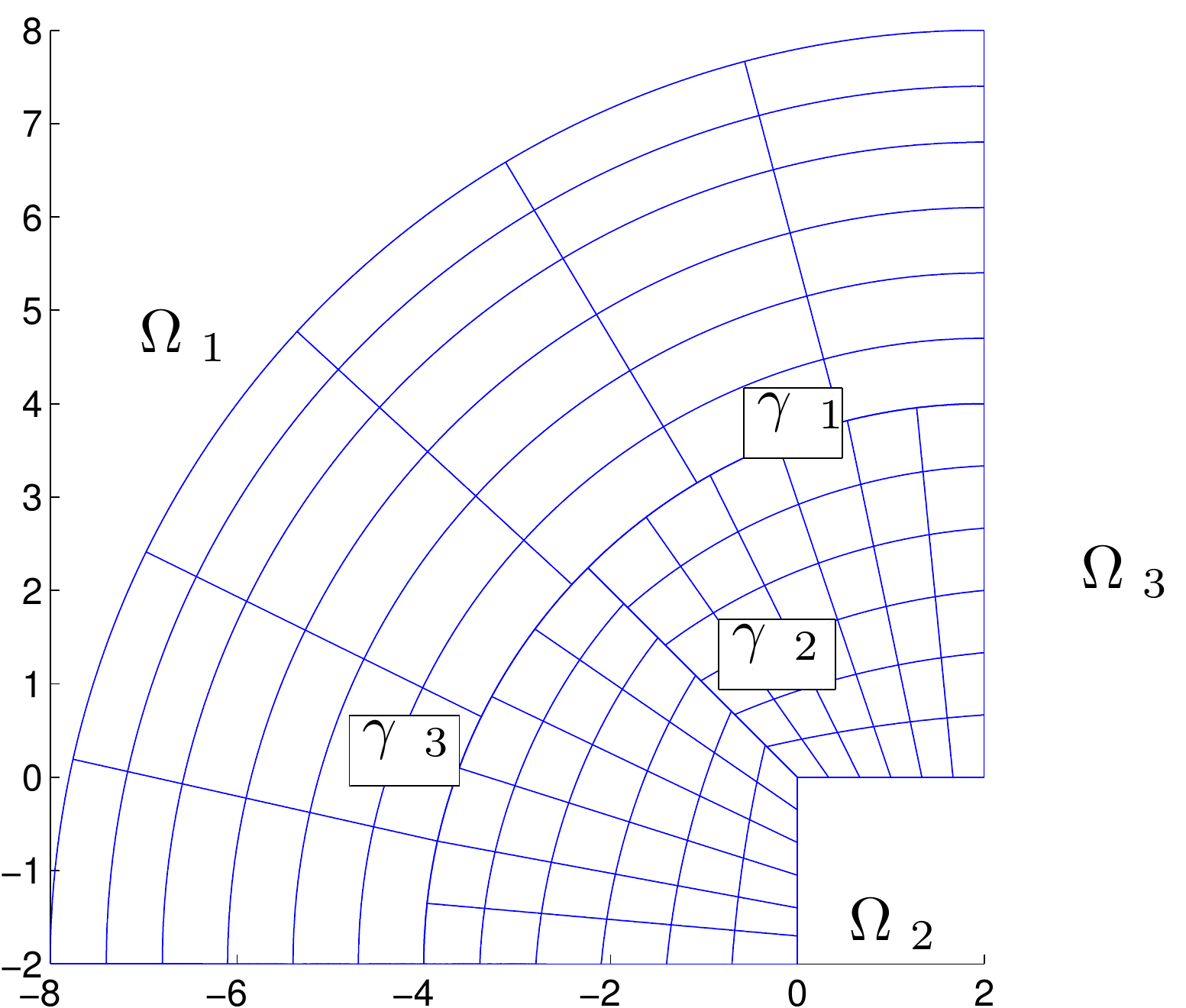}\,\,
	\caption{Problem of subsection~\ref{numerics:subsection_singular} - a non-conforming mesh.}
	\label{fig:numerics:singular_mesh} 
\end{figure}

The results are compared to the analytical solution and a numerical error study is provided. The errors are shown in  Figures~\ref{fig:numerics:singular_cv_curve_PD_new}  and~\ref{fig:numerics:singular_cv_curve_4_new}, the $L^2$ and broken $V$ errors are considered for the primal solution and the $L^2$ error for the dual solution.
 
 Considering the same degree pairing  the boundary modification is necessary and the results show the optimality of the method with respect to the regularity of the solution, see Figure~\ref{fig:numerics:singular_cv_curve_PD_new}. We note an initial bad behavior of the $L^2$ dual error on interface 2. The increase in the error might be related to the fact, that the exact Lagrange multiplier of interface 2 is zero. More precisely, the convergence rate $1/6$ for the dual variable is  a very slow rate, but induced by the regularity of the solution at this interface, as we can see that the rate on the remaining interfaces is better.  
  Moreover, we have also considered different degree pairings, and observed numerically the stability of the methods. In Figure~\ref{fig:numerics:singular_cv_curve_PD_new}, the results for the pairing $P4-P2$ and $P3-P1$ are given and show asymptotically the same convergence rates as best approximations.
 \begin{figure}[htbp] %
	\includegraphics[width=0.47\textwidth]{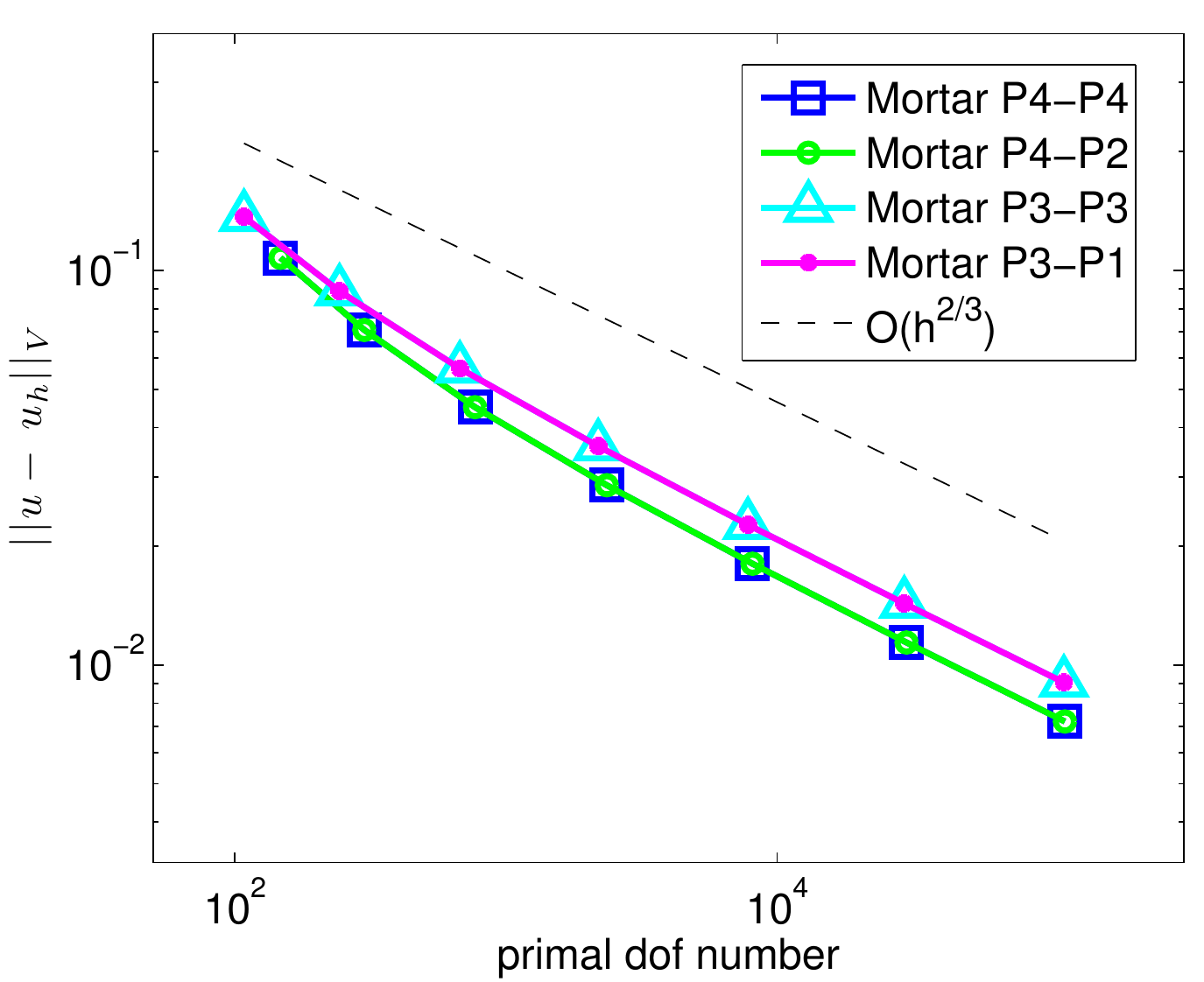}\,\,
	\includegraphics[width=0.47\textwidth]{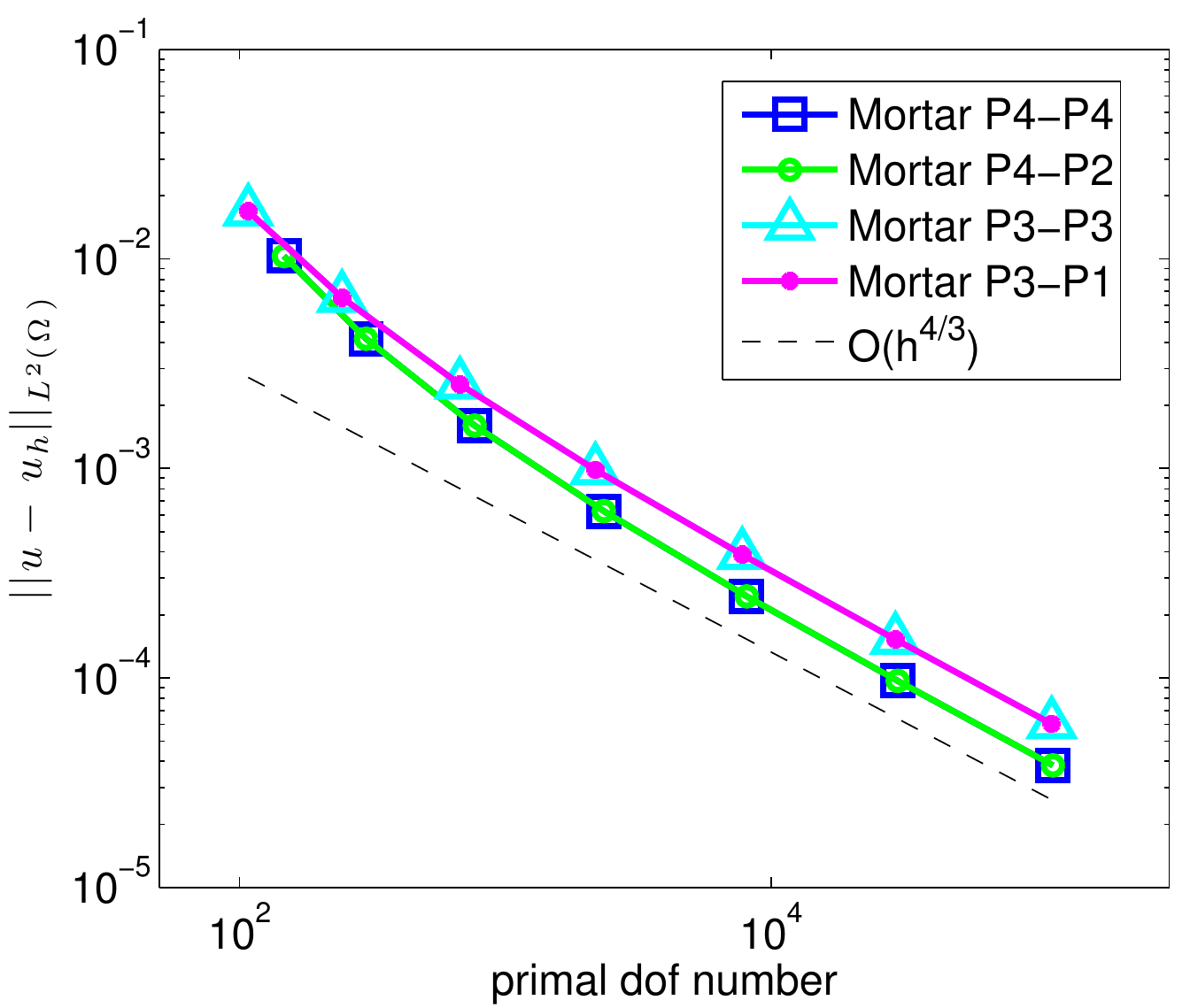}\,\,
	\includegraphics[width=0.47\textwidth]{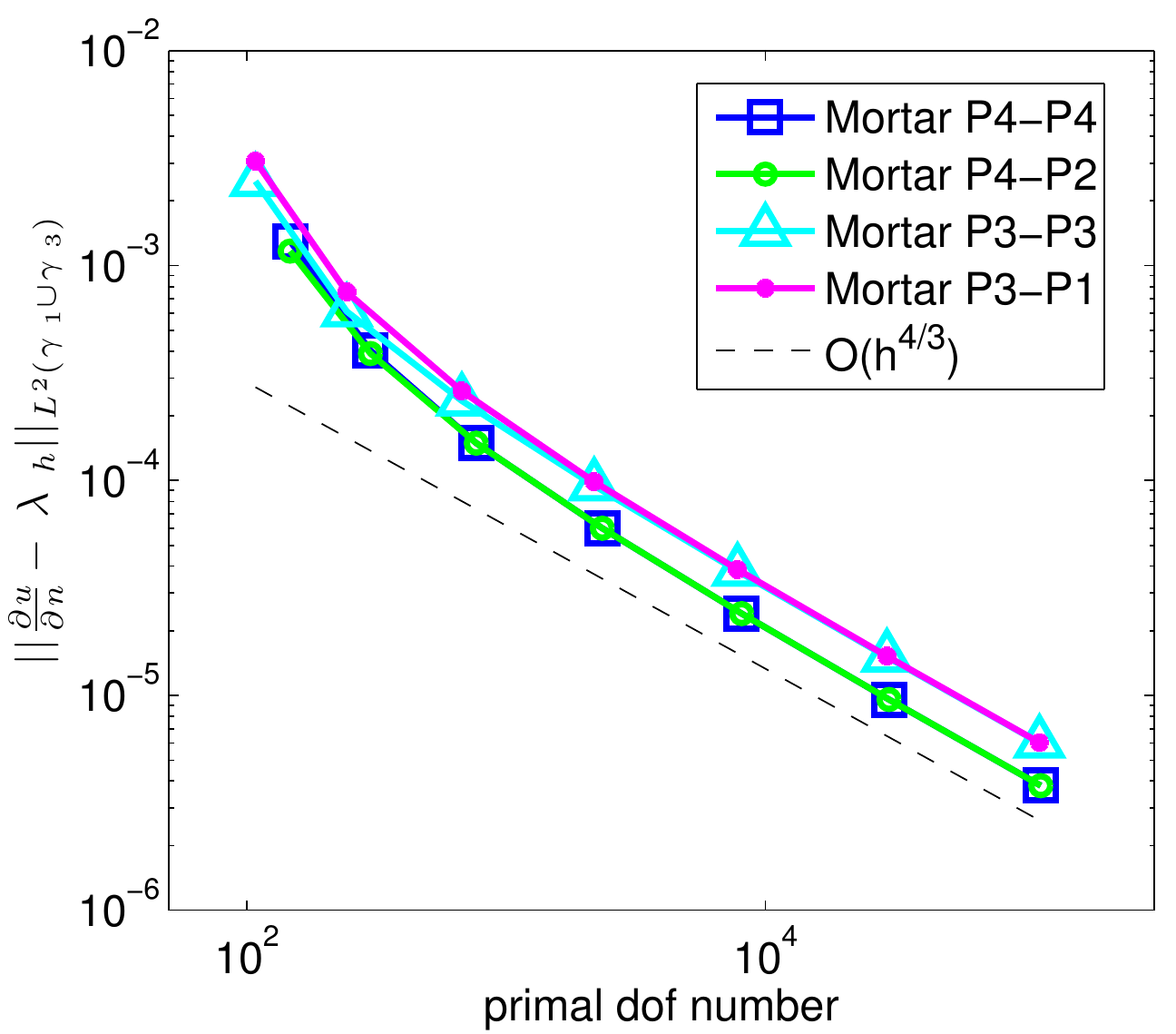}\,\,
	\includegraphics[width=0.47\textwidth]{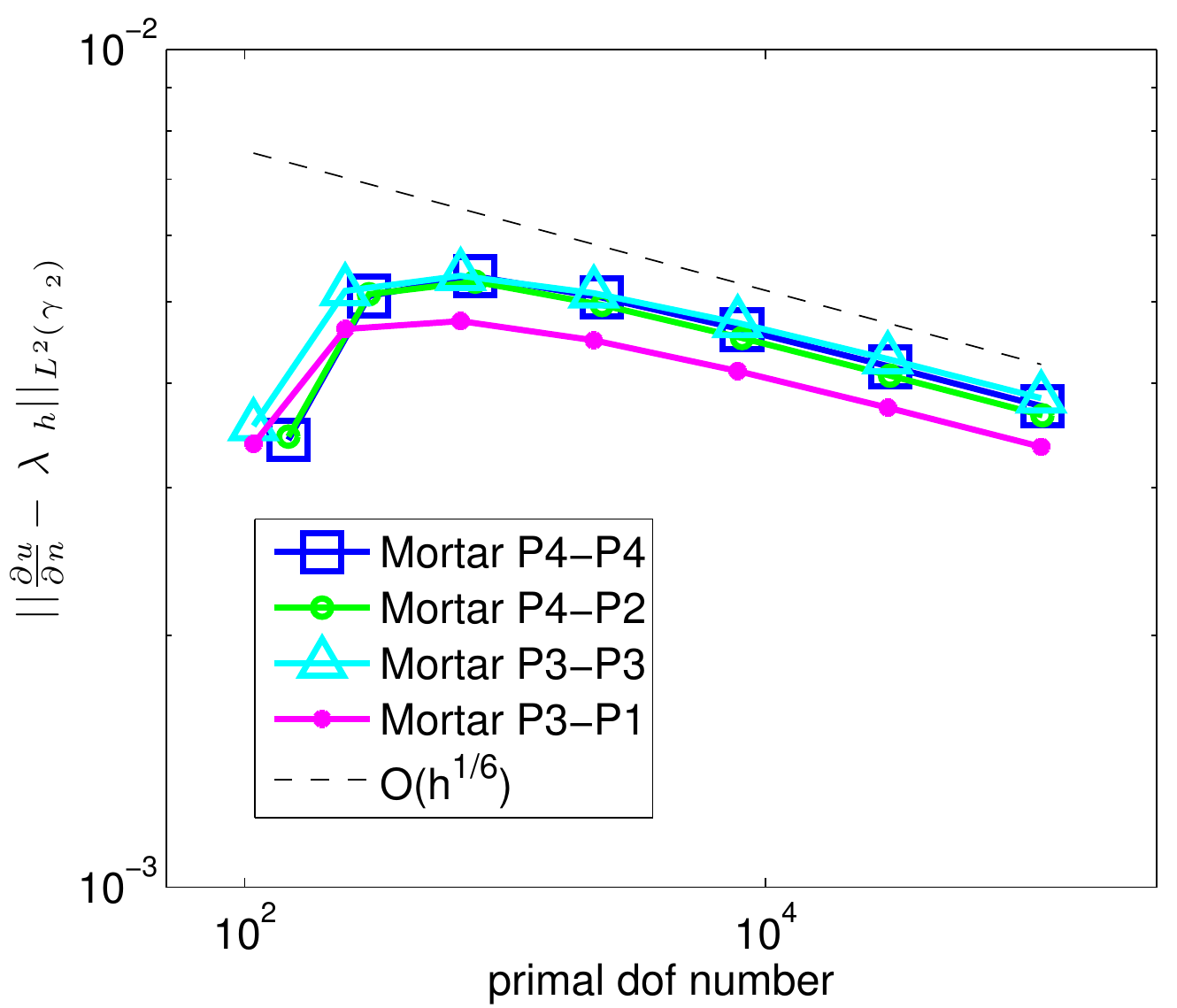}
	\caption{Problem of subsection~\ref{numerics:subsection_singular} - Error curves for several pairings. Top left: broken $V$ primal error. Top right: $L^2$ primal error. Bottom left: $L^2$ dual error at the interfaces 1 and 3. Bottom right: $L^2$ dual error at interface 2.}
	\label{fig:numerics:singular_cv_curve_PD_new} 
\end{figure}

\begin{figure}[htbp] 
	\includegraphics[width=0.47\textwidth]{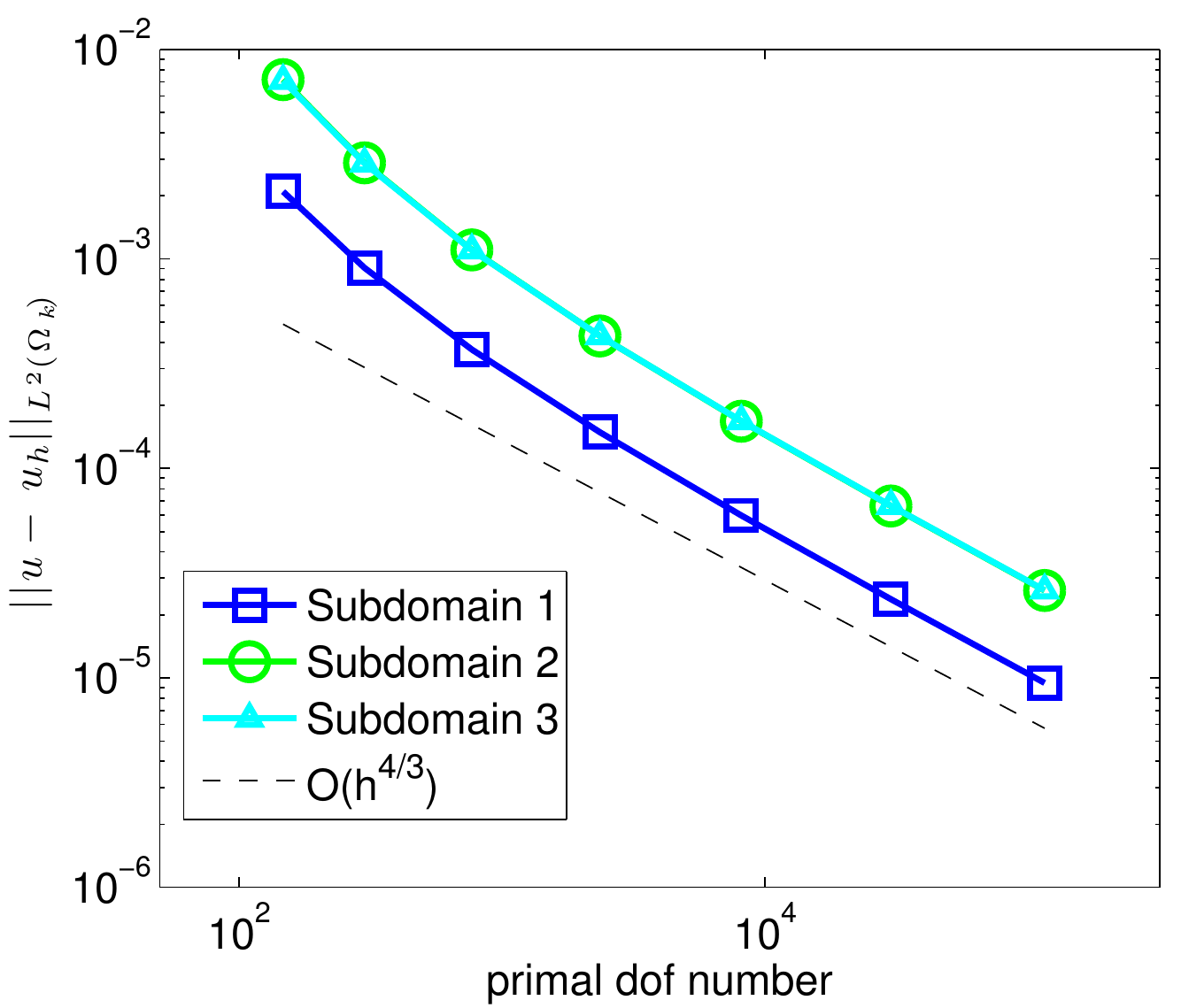}\,\,
	\includegraphics[width=0.47\textwidth]{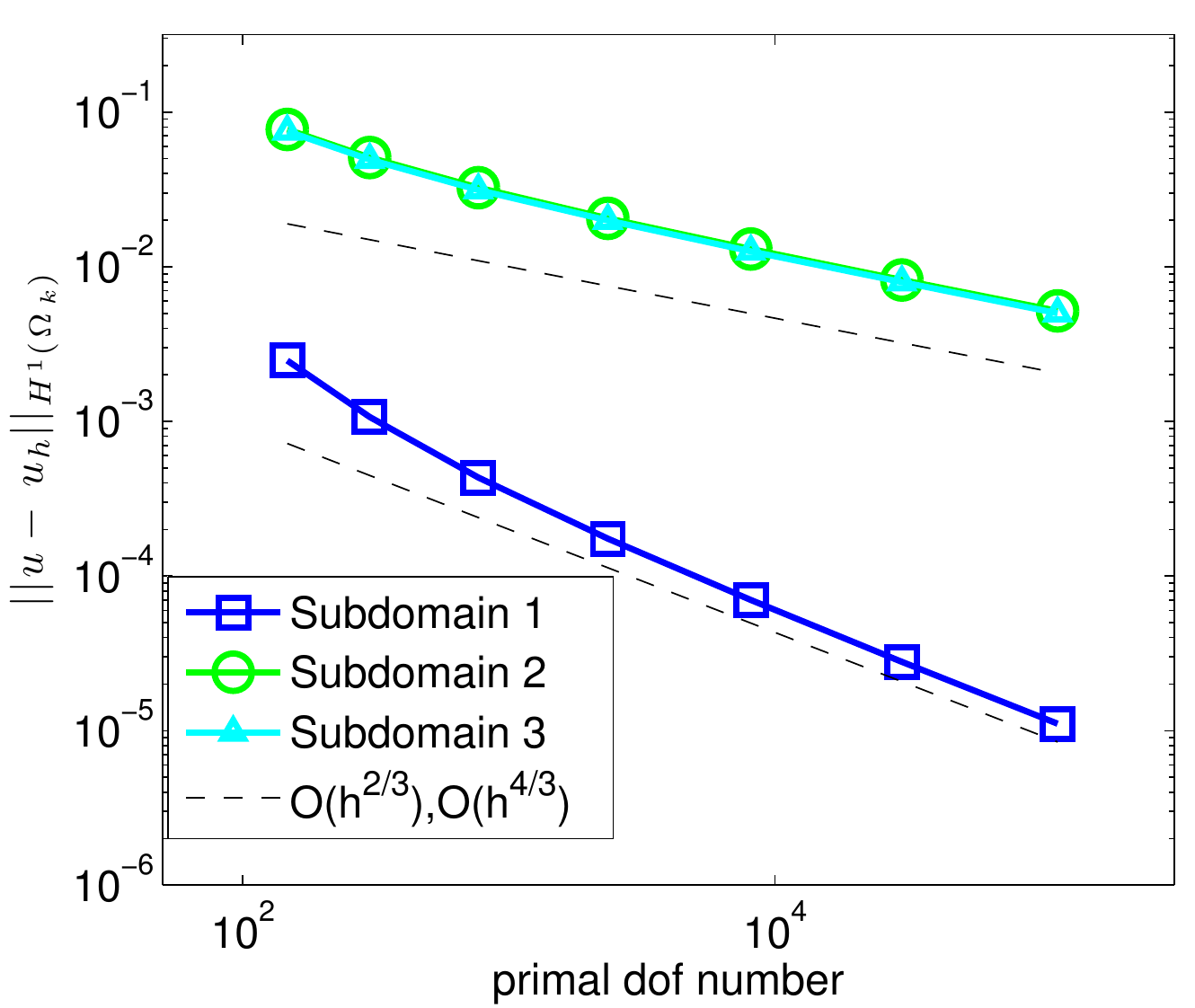}\\
	\includegraphics[width=0.47\textwidth]{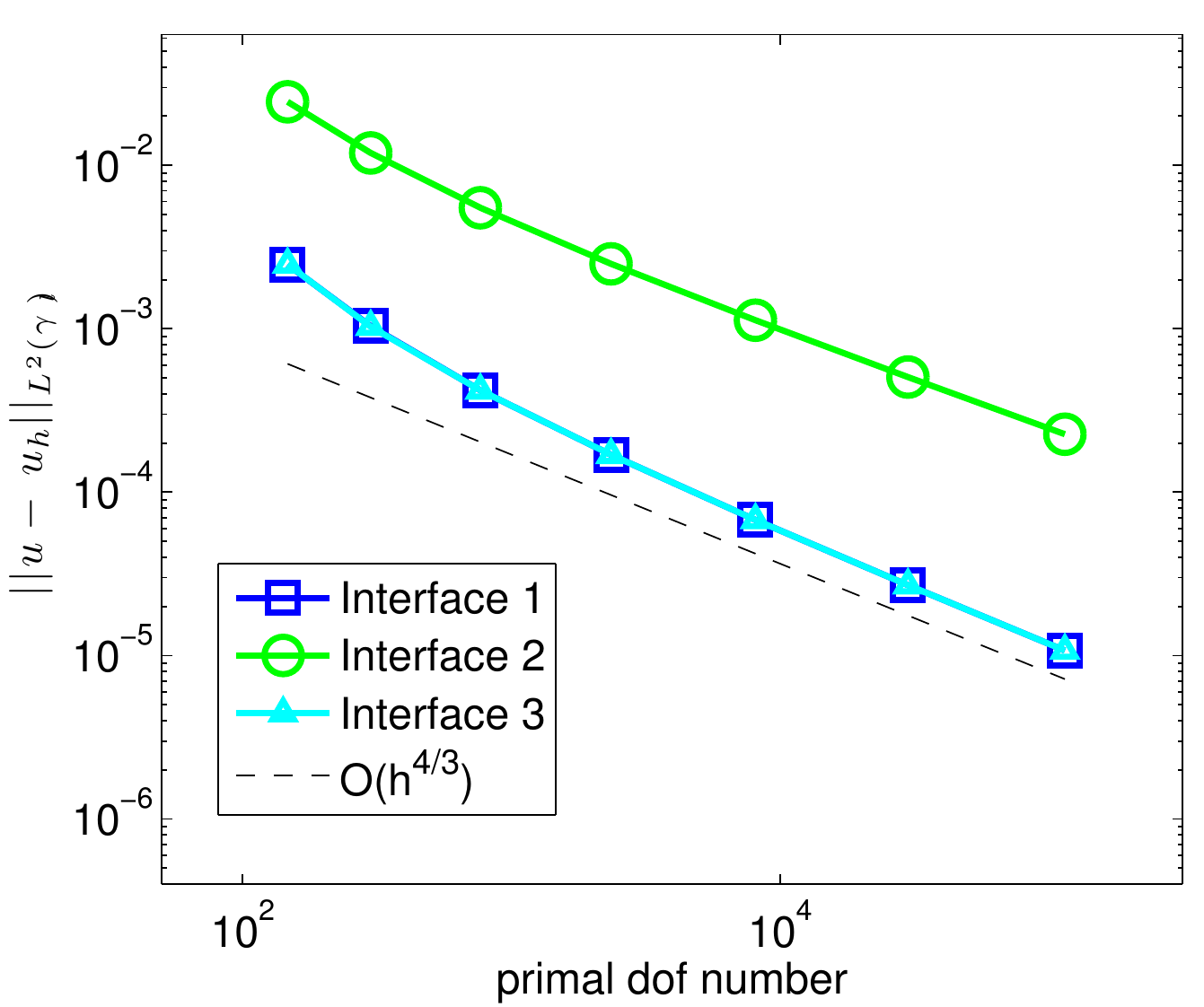}\,\,
	\includegraphics[width=0.47\textwidth]{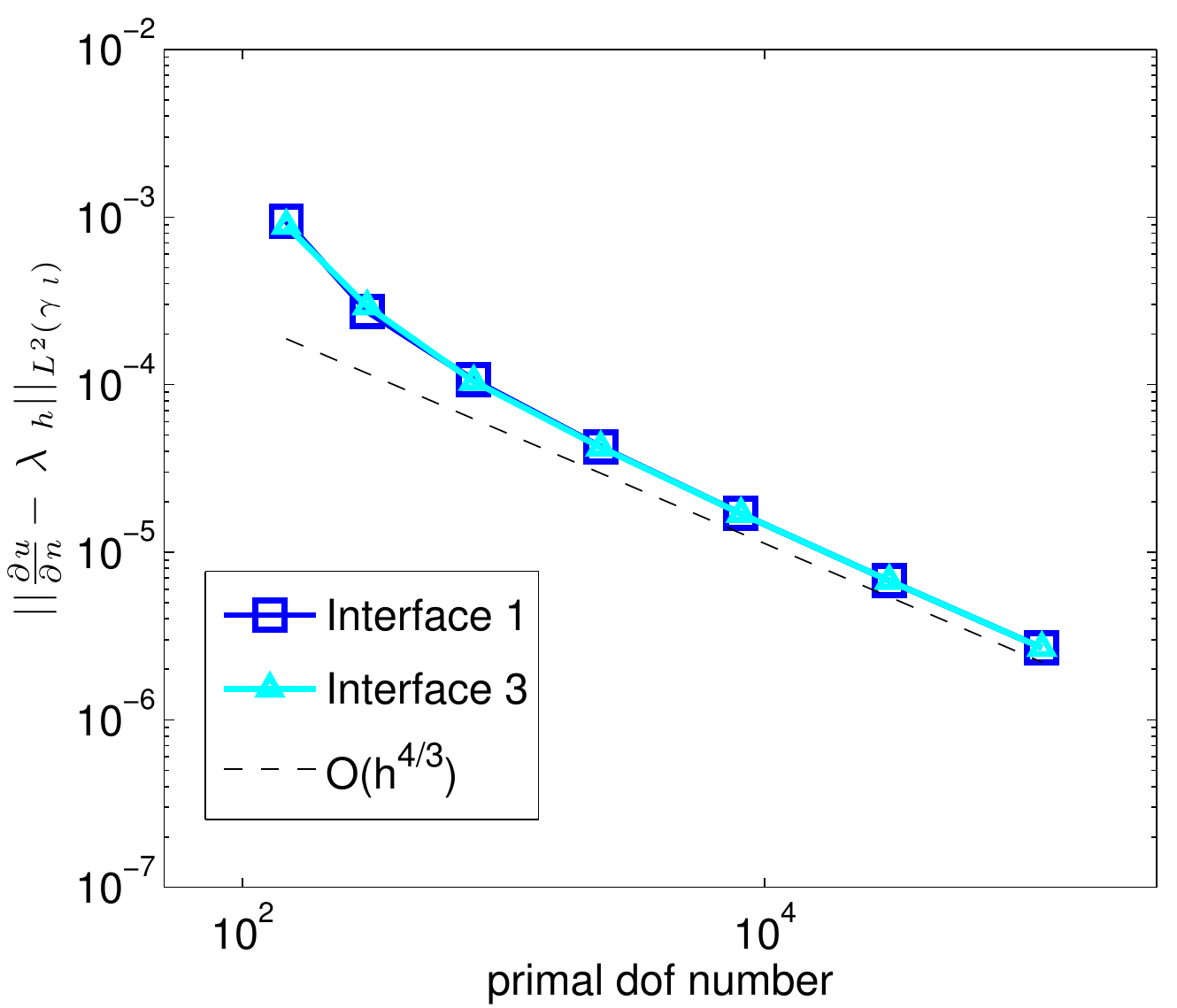}
	\caption{Problem of Subsection~\ref{numerics:subsection_singular} - Error curves for the pairing $P4-P4$. Top left: $L^2$ error on each subdomain. Top right: $H^1$ error on each subdomain. Bottom left: $L^2$ primal trace error at each interface. Bottom right: $L^2$ dual error at interface 1 and 3.}
	\label{fig:numerics:singular_cv_curve_4_new} 
\end{figure}

We also studied the error distribution over the different subdomains and interfaces, see Figure~\ref{fig:numerics:singular_cv_curve_4_new}. The results clearly show the pollution effect in the $L^2$ norm, i.e., also in the subdomain 1 far away from the singularity no better $L^2$ convergence rate can be observed. The situation is different if we consider the $H^1$ norm subdomain-wise. Here a better rate can be observed for subdomain 1 although it is significantly smaller than the best approximation rate restricted to this subdomain. This effect can be explained by local Wahlbin type error considerations in combination with the already mentioned pollution effect. Regarding the dual error, the same behavior as for the $H^1$ primal error is observed. 
This discrepancy between the interface 2 and the remaining interfaces can also be seen in the $L^2$ primal trace error. 

%
 \subsection{A scalar problem with jumping coefficients} \label{numerics:subsection_layers}
We consider the domain $\Omega = (0,2)\times (0,2.8)$ with homogeneous Dirichlet conditions applied on $\partial \Omega_D = (0,2)\times \{0, 2.8\}$ and homogeneous Neumann conditions on $\partial \Omega_N = \partial \Omega \backslash \partial \Omega_D$.

We consider three patches, with $\alpha$ being constant on each patch, see a distribution in Figure~\ref{fig:numerics:nh_setting}. The interface is a B-Spline curve of degree $3$ and exactly represented on the initial mesh.
 The external layers have the constant $\alpha = 1$, and the internal one $\alpha=1/100$ and the right hand side is $f=1$.  Due to the different values of $\alpha$, the mesh of the interior layer is chosen finer compared to the one of the other two layers. A uniform refinement starting from the initial mesh in Figure~\ref{fig:numerics:nh_setting} is performed.

In Figure~\ref{fig:numerics:nh_setting} the $L^2$ error of an equal degree pairing for $p=3$ and $p=4$ is shown. Lacking an exact solution, we compute the error by comparing to a reference solution, visible in Figure~\ref{fig:numerics:nh_results}. The reference solution is obtained by two more $h$-refinement steps starting from the finest mesh. 

We note that jumping coefficients can cause singularities in the cases, where more than two subdomains meet, although it is well-known that the case of a rectangular domain with interfaces parallel to the $x$-axis yields to a smooth solution. 

Numerically, we obtain optimal convergence for the case $p=3$, but, considering the convergence rate, there is no benefit of the degree elevation to degree $p=4$, which indicates that the solution is not sufficiently smooth. 
Further numerical investigations let us conjecture that this can have two reasons, one coming from the fact that the interface is not smooth enough to have higher regularity. In this example the interface was built from a B-Spline curve of degree $p=3$, hence the continuity on the interface is only $C^2$. This has an influence on the smoothness of the unit normal along the interface and thus on the smoothness of the solution. The other reason is to have corner singularities in the inner domain where interface meets the outer boundary. In this example, the angles were set to be $\pi/2$.

 \begin{figure}[htbp] 
\centering
	\includegraphics[width=0.32\textwidth]{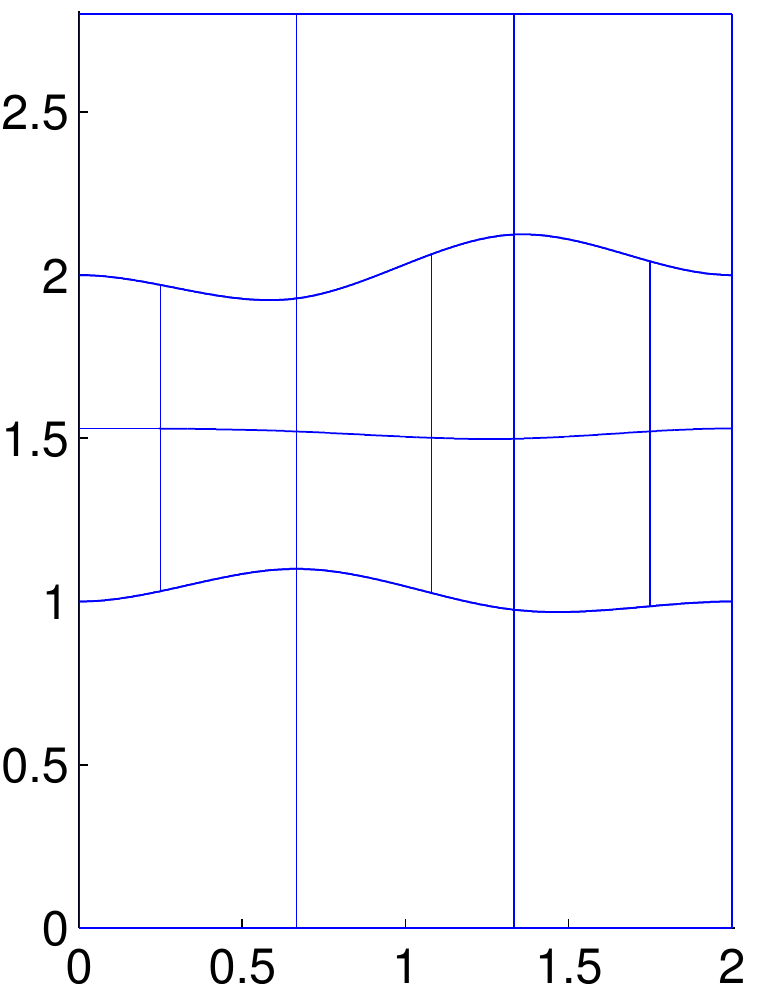} \hfill 
	\includegraphics[width=0.6\textwidth]{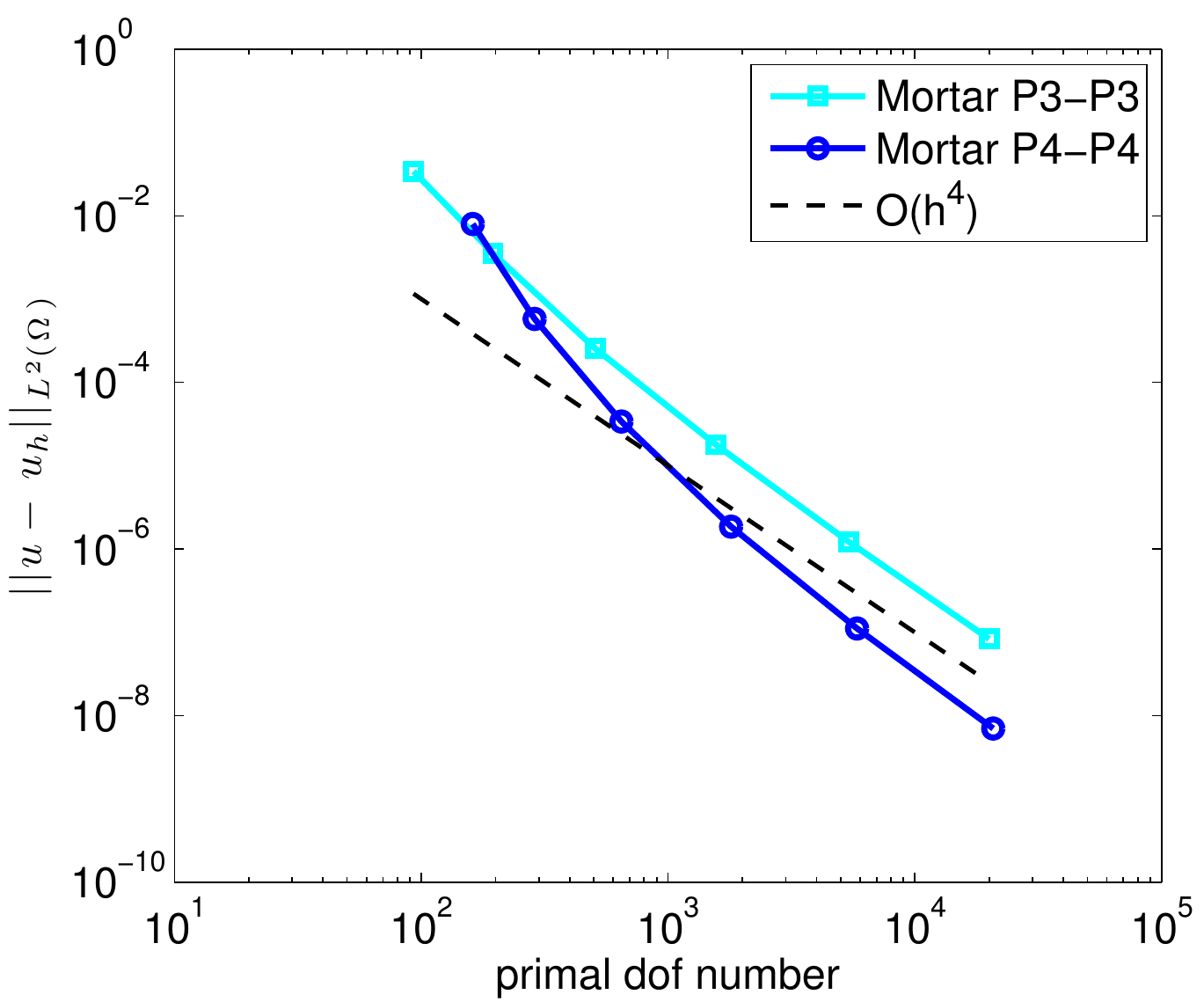}
	\caption{Problem of Subsection~\ref{numerics:subsection_layers} - Left: initial mesh. Right: primal $L^2$ error curves for two equal order pairings.}
	\label{fig:numerics:nh_setting} 
\end{figure}
\begin{figure}[htbp] 
\centering
	
	\includegraphics[width=0.7\textwidth]{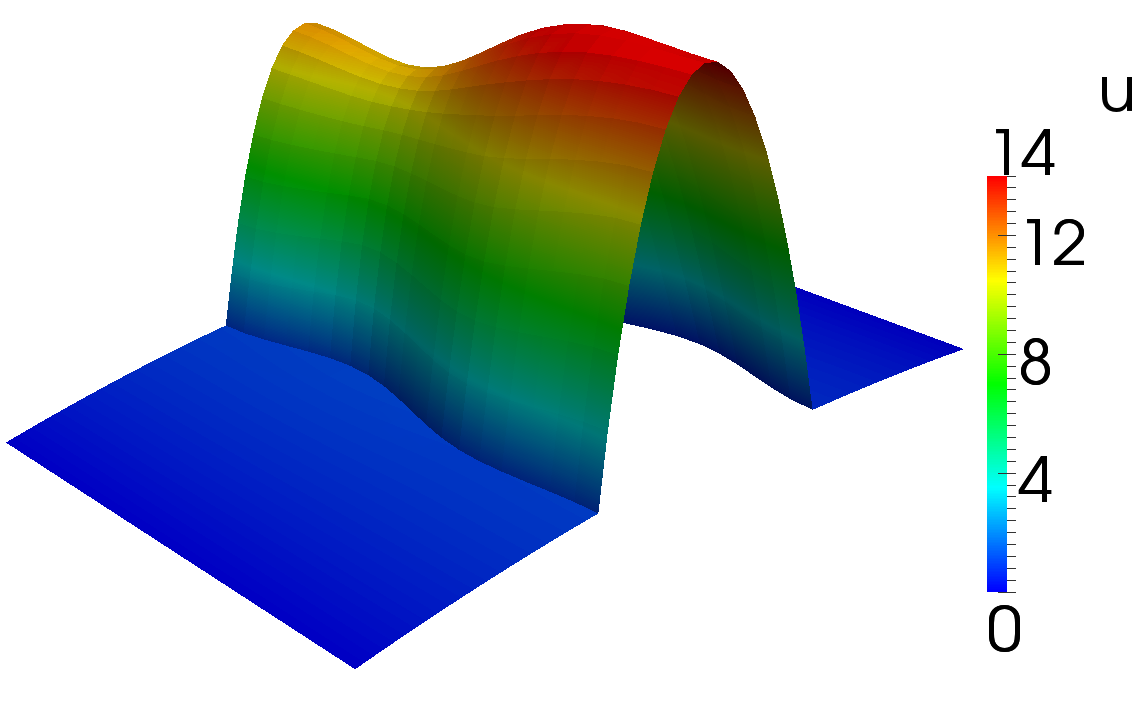}
	\caption{Problem of Subsection~\ref{numerics:subsection_layers} - Solution for the pairing $P3-P3$ on the finest mesh.}
	\label{fig:numerics:nh_results} 
\end{figure}
%
\subsection{A scalar problem on a two patch domain with a non-matching interface} \label{numerics:subsection_non_matching}
Let us consider the standard Poisson equation solved on the unit square $\Omega = (0,1)^2$, which is decomposed into two patches presented in Figure~\ref{fig:numerics:non_matching}. As the subdomains cannot exactly be represented by the chosen spline spaces for the geometry approximation, the subdomains do not match at the interface, see Figure~\ref{fig:numerics:non_matching}. And thus, due to this geometry approximation an additional variational crime is introduced in the weak problem formulation.

The internal load and the boundary conditions have been manufactured to have the analytical solution $u(x,y)= \sin(5 y) \sin(6 x)$. To measure the influence of the geometrical approximation on the mortar method accuracy, we consider the same degree pairing and note that in this case no boundary modification is required. This can be granted by setting homogeneous Neumann conditions on $\partial \Omega_N = \{0,1\}\times (0,1)$ and Dirichlet conditions on $\partial \Omega \backslash \partial \Omega_N$

Firstly, in the top row of Figure~\ref{fig:numerics:non_matching_cv_curve_new}, we show the numerically obtained error decay in the $L^2$ norm. As expected from the theory, we observe for an equal order $p$ pairing a convergence order of $p+1$ for the primal variable. We also compare the primal error of a matching and non-matching mesh situation. As Figure~\ref{fig:numerics:non_matching_cv_curve_new} shows, no significant quantitative difference can be observed in the asymptotical behavior.
Note, that the optimal primal $L^2$ rates are in accordance with the theory of finite element methods,  see~\cite{wohlmuth:09}. Moreover, the results of the bottom right picture of Figure~\ref{fig:numerics:non_matching_cv_curve_new} show even higher rates for the dual variable than expected from the theory. 

Secondly, we consider different degree pairings in order to see the accuracy of the reduced order mortar method for a problem containing an additional approximation. 
In the lower row of Figure~\ref{fig:numerics:non_matching_cv_curve_new}, the $L^2$ error of the the primal variable and of the dual variable for the pairing $P4-P2$ and $P3-P1$ is given.  We note that a lower dual degree does not deteriorate the accuracy on the primal variable. From the theoretical point of view, it is obvious that a $p/p-2$ pairing gives a priori results for the Lagrange multiplier which are of the same order as the best approximation of the dual space. However, this is not the case for the primal variable. Theorem~\ref{thm:convergence_rates} indicates that for this case a $\sqrt{h}$ is lost. This is not observed in our situation, see  Figure~\ref{fig:numerics:non_matching_cv_curve_new}. This might be a consequence of superconvergence arguments which can possibly recover an extra order of $\sqrt{h}$ on uniformly refined meshes.

\begin{figure}[htbp] 
	\centering
	\includegraphics[width=0.4\textwidth]{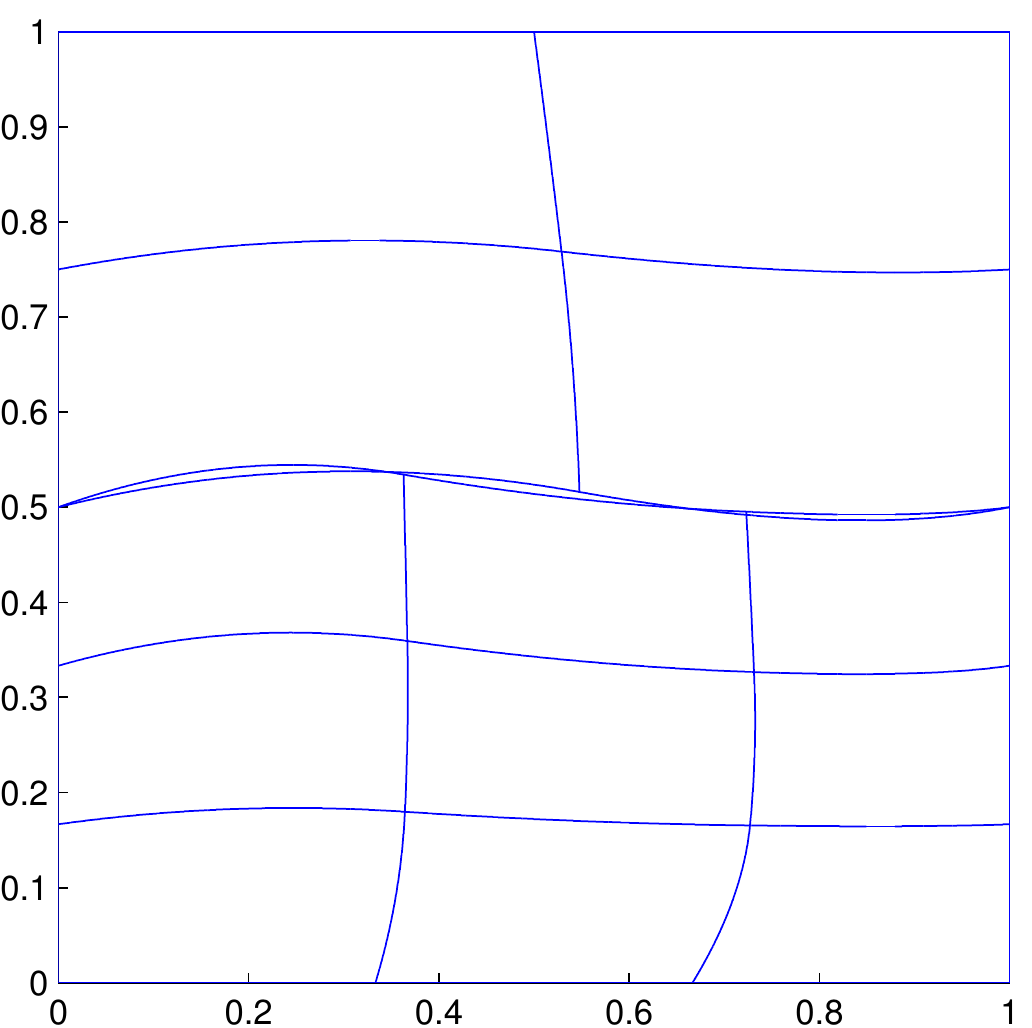}	\caption{Problem of Subsection~\ref{numerics:subsection_non_matching} - Non-conforming mesh with a non-matching interface.}
	\label{fig:numerics:non_matching} 
\end{figure}

\begin{figure}[htbp]  
	\includegraphics[width=0.57\textwidth]{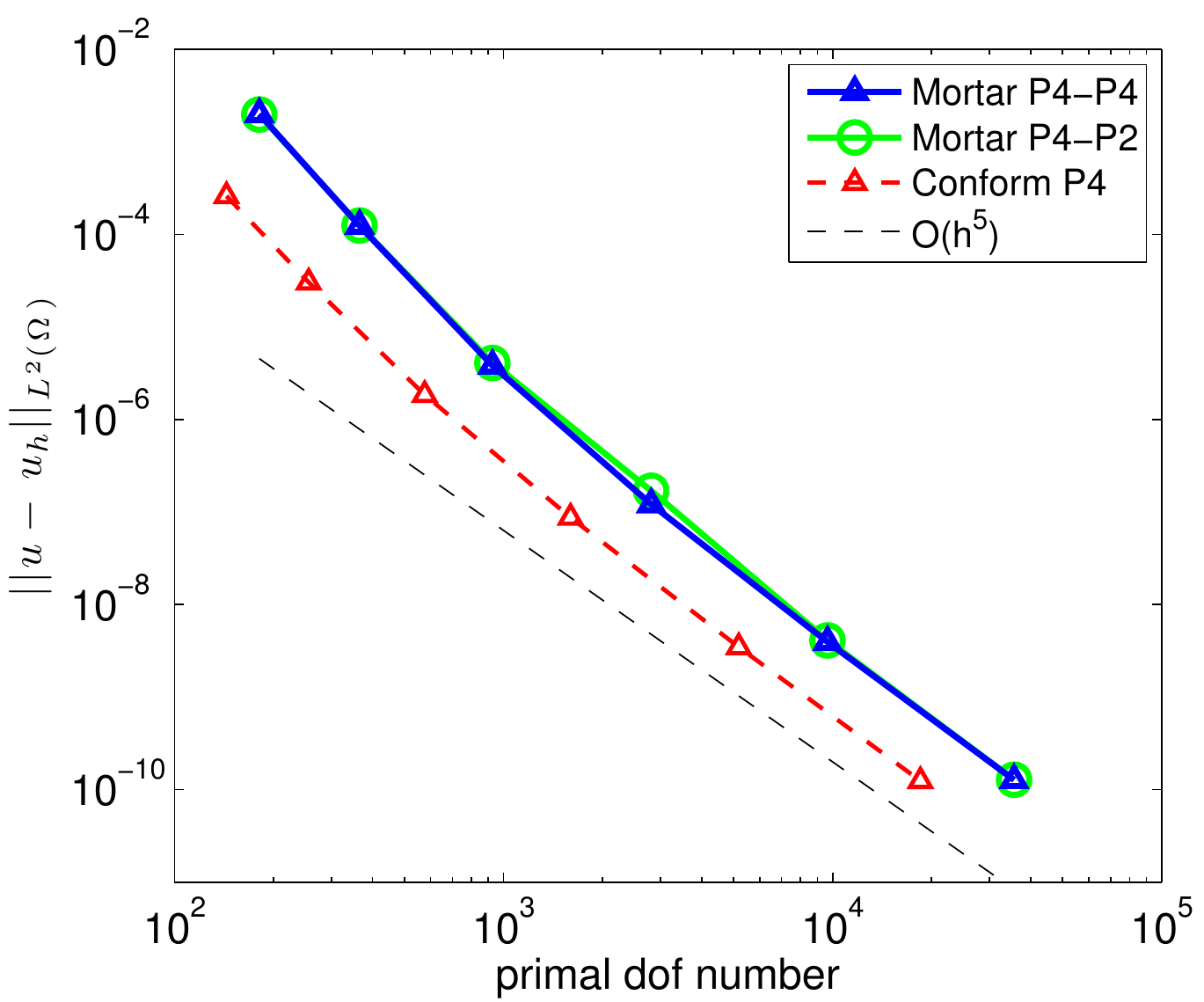}\,\,
	\includegraphics[width=0.57\textwidth]{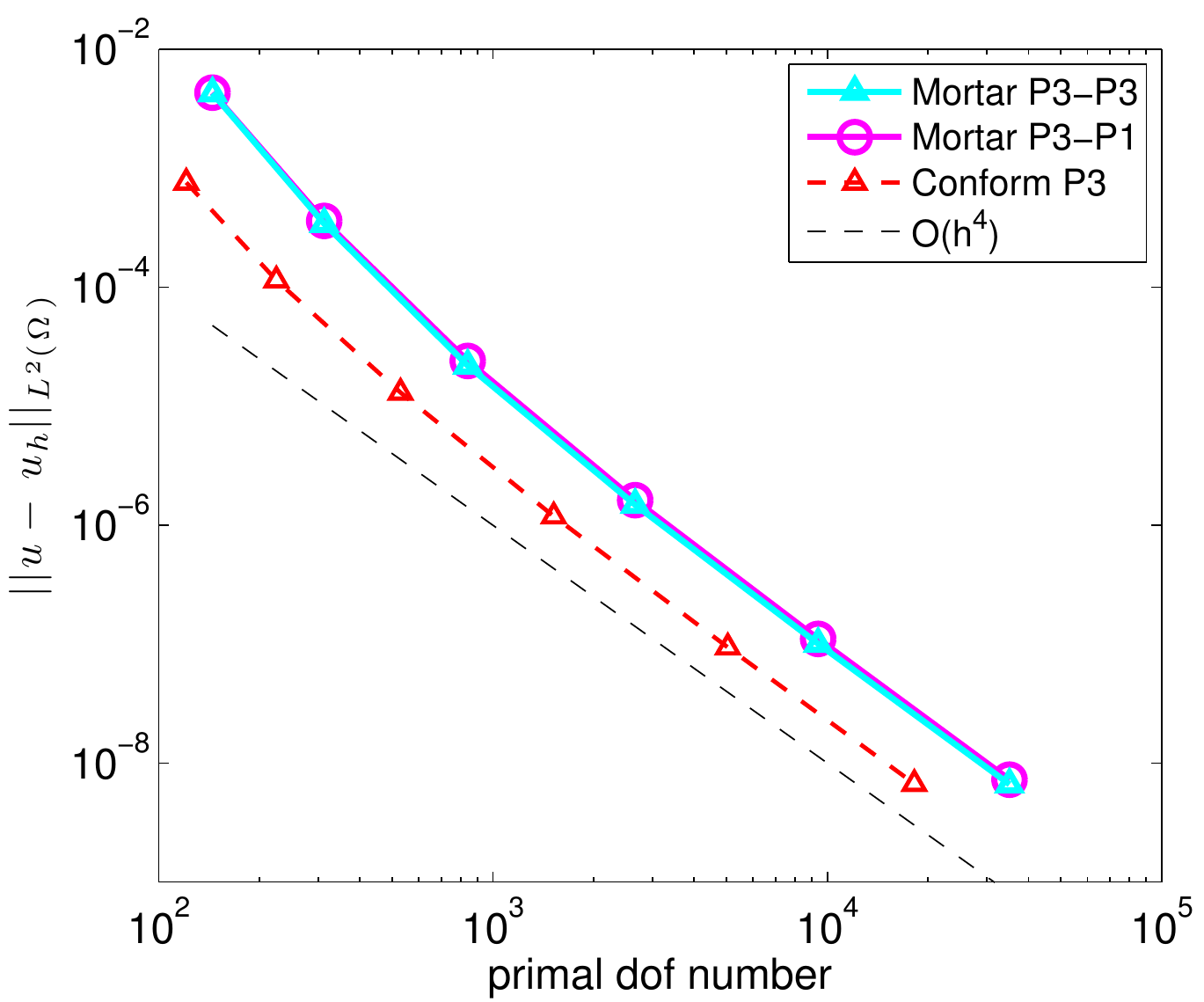}\\
	\includegraphics[width=0.57\textwidth]{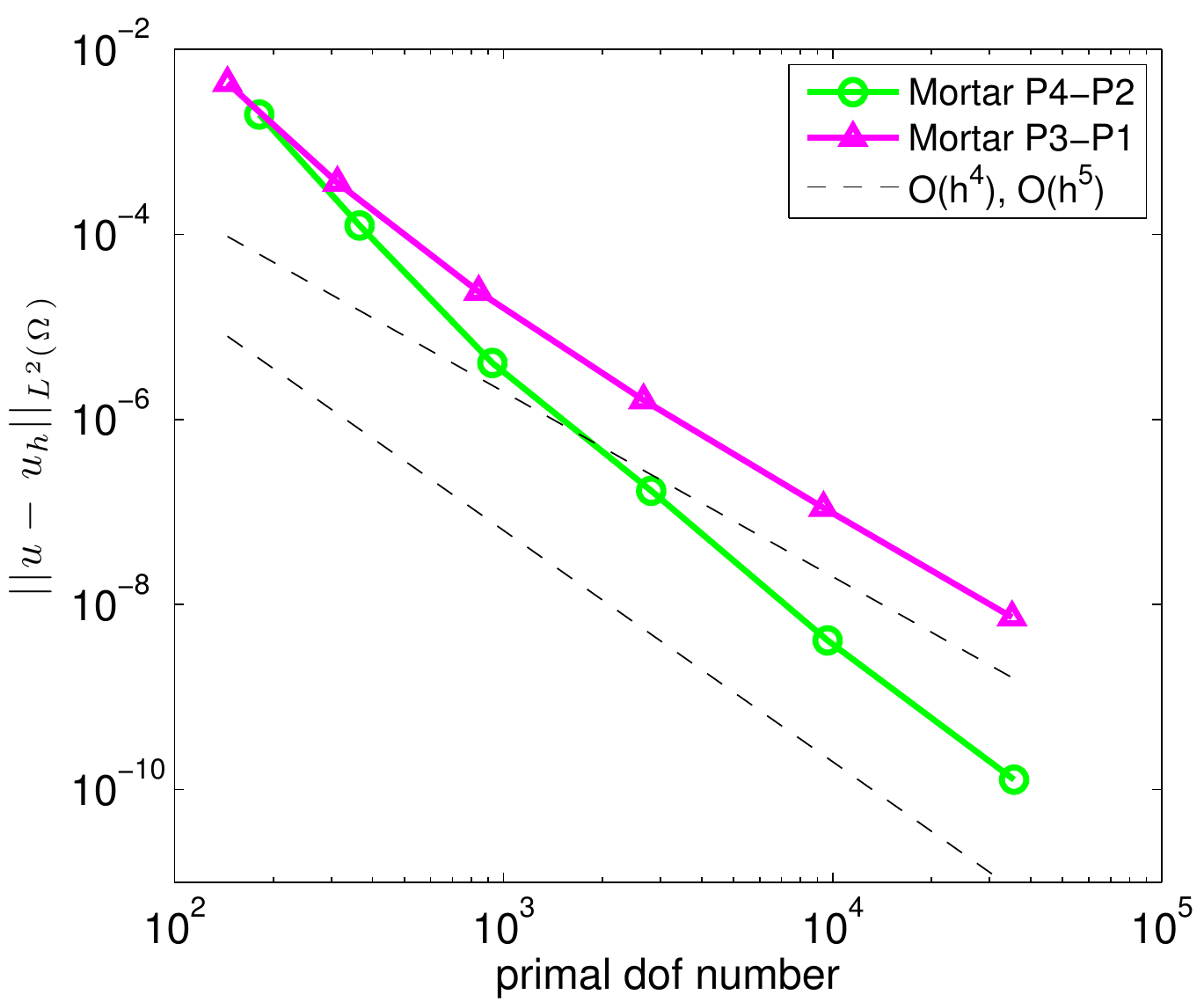}\,\,
	\includegraphics[width=0.57\textwidth]{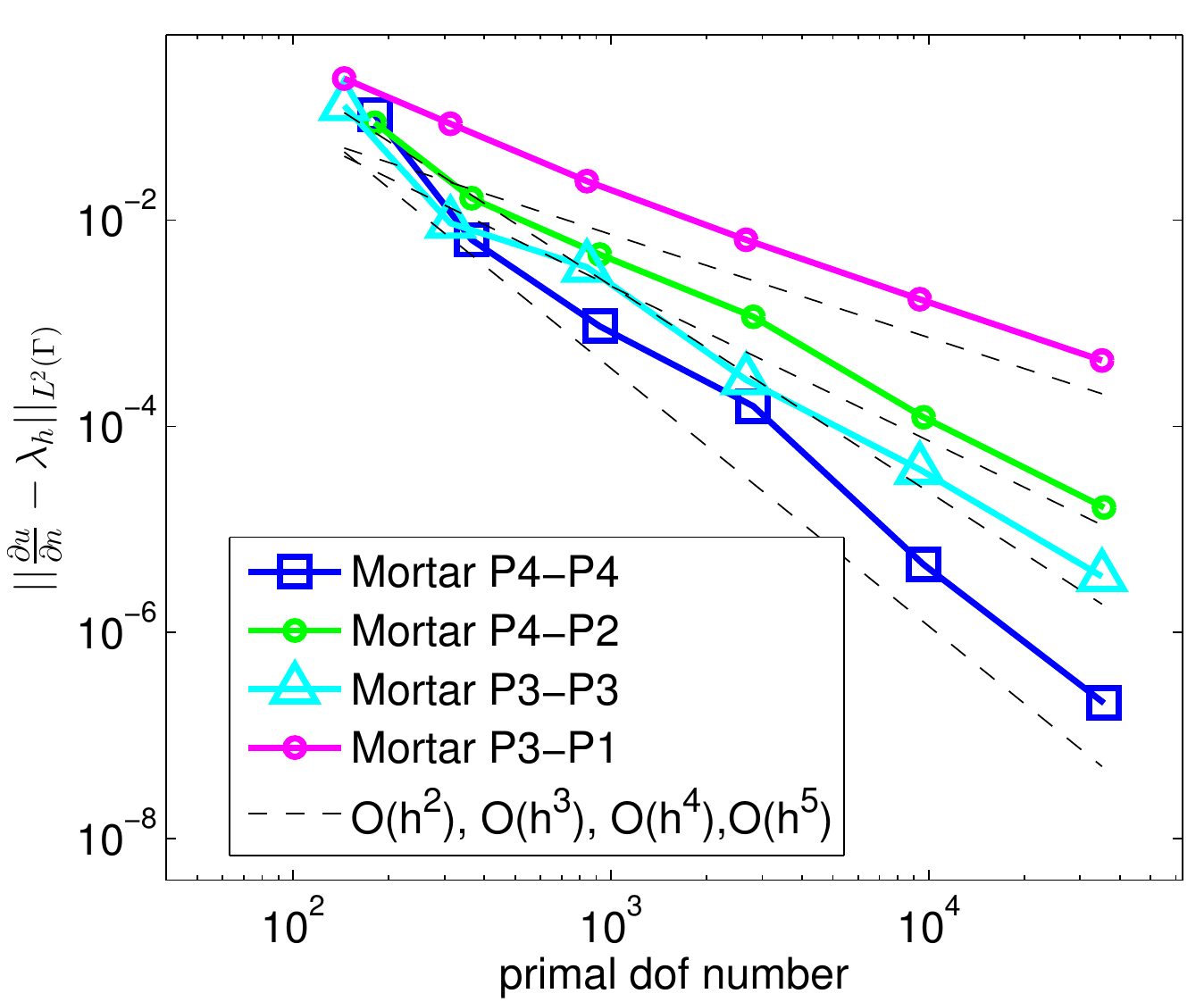}
	\caption{Problem of Subsection~\ref{numerics:subsection_non_matching} - Several $L^2$ error curves. Top left: primal error for stable pairings of primal degree $p=4$. Top right: primal error for stable pairings of primal degree $p=3$. Bottom left: direct comparison of the primal error for pairings $P4-P2$ and $P3-P1$. Bottom right: dual error for  stable pairings of primal degree $p=3$ and $p=4$.} 
	\label{fig:numerics:non_matching_cv_curve_new}
\end{figure}
To conclude, this example shows that the influence of the additional geometry error in the mortar method context is quite small.

%
\subsection{A linear elasticity problem} \label{numerics:subsection_linear_elasticity}
Let us define the mechanical equilibrium on a domain $\Omega$ as:
\begin{displaymath}
\begin{array}{llll}
	-\divergence(\underline{\underline{\sigma}})&=&\underline{f} & \textrm{in } \Omega,\\
	\underline{u}&=&\underline{u_D} &\textrm{on } {\partial \Omega}_D,\\
	\underline{\underline{\sigma}} \cdot \underline{n}&=&\underline{g}  & \textrm{on } {\partial \Omega}_N.\\
\end{array}
\end{displaymath}
In a plane linear isotropic elastic context, we have the following relations between the stress tensor $\underline{\underline{\sigma}}$, the strain tensor $\underline{\underline{\varepsilon}}$ and the displacement $\underline{u}$:  
\[
\underline{\underline{\sigma}}=\lambda \, tr(\underline{\underline{\varepsilon}}) \, \underline{\underline{\mathbb{I}}}\,+\,2\,\mu \,\underline{\underline{\varepsilon}}, \quad \underline{\underline{\varepsilon}}=\underline{\underline{\nabla^su}}= \frac{1}{2} (\underline{\underline{\nabla u}} \,+\, \underline{\underline{\nabla^T u}}).
\]
where $\divergence$, $\nabla$, $\underline{n}$, $\underline{f}$, $\underline{u_D}$, $\underline{g}$, $\lambda$ and 
$\mu$ stand respectively for the standard divergence operator, the gradient operator, the unit outward normal to $\Omega$ on $\partial \Omega$, the prescribed data values on ${\partial \Omega}_D$ and on ${\partial \Omega}_N$ and the Lam\'e coefficients.

Let us consider the problem of an infinite elastic plate with a circular hole subjected to tension loading in $x=-\infty$ and $x=+\infty$. Considering the load and the boundary condition symmetries, only a quarter of the plate is modeled. This test, which has an analytical solution,~\cite{timo:51}, is a typical benchmark in isogeometric analysis because the NURBS offer the possibility to exactly represent the geometry. However, it cannot be parametrized smoothly in a one patch setting, so it is worth to consider it within a domain decomposition approach such as the mortar method.

\begin{figure}[htbp]  
	\includegraphics[width=0.315\textwidth]{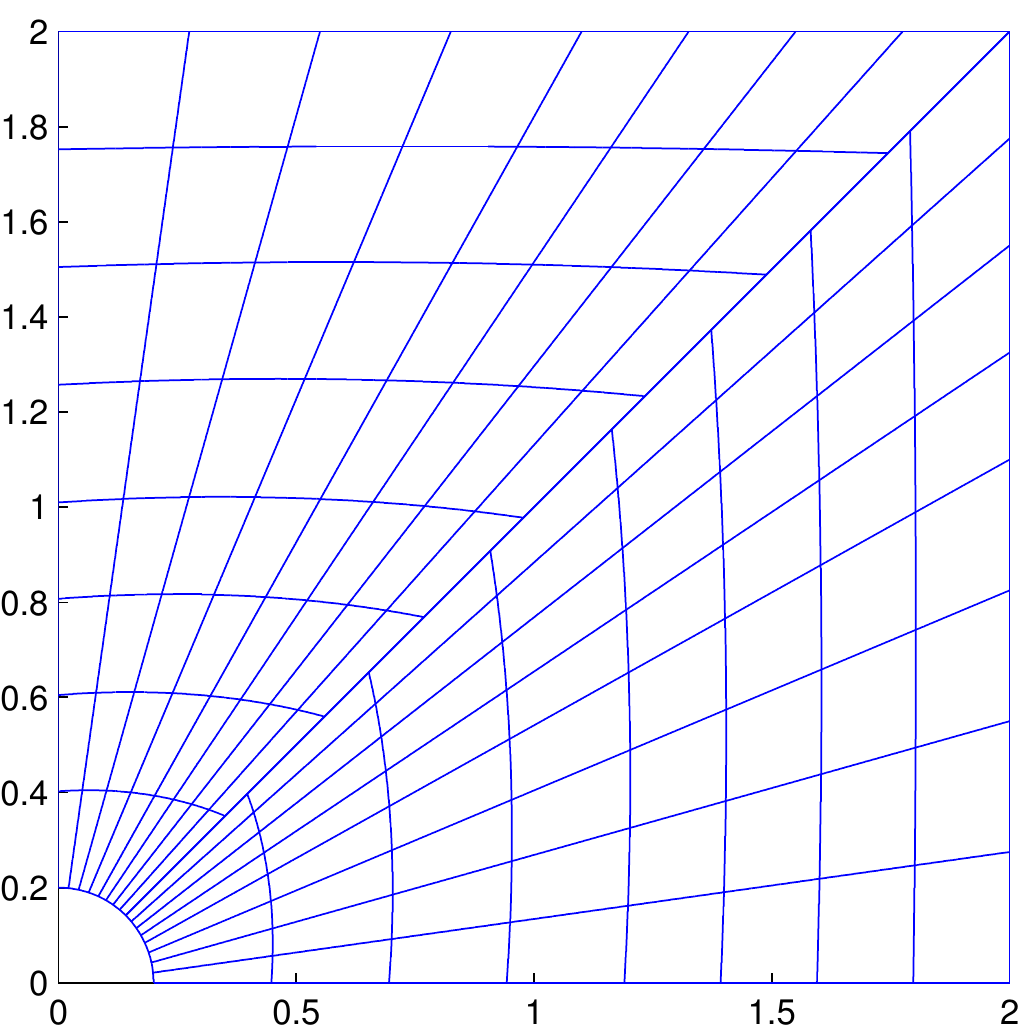}
	\includegraphics[width=0.31\textwidth]{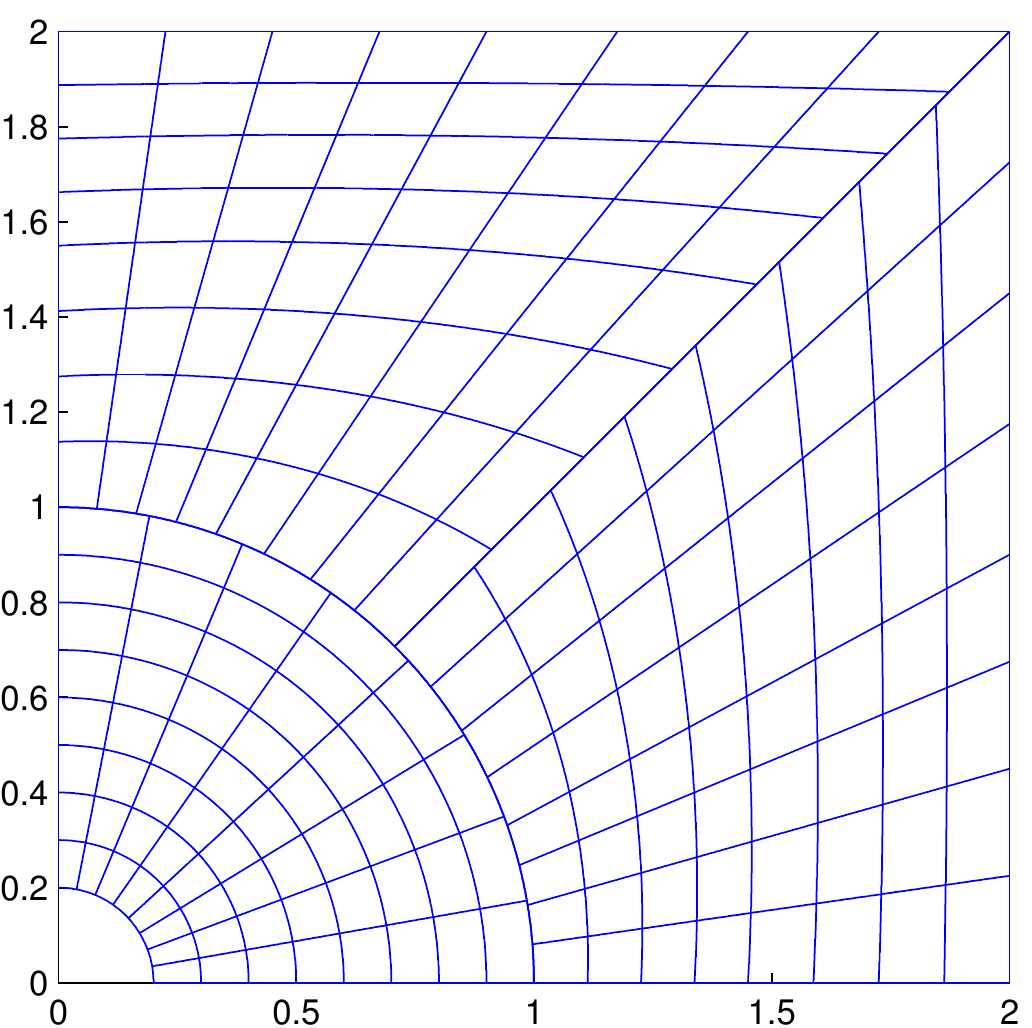}
	\includegraphics[width=0.31\textwidth]{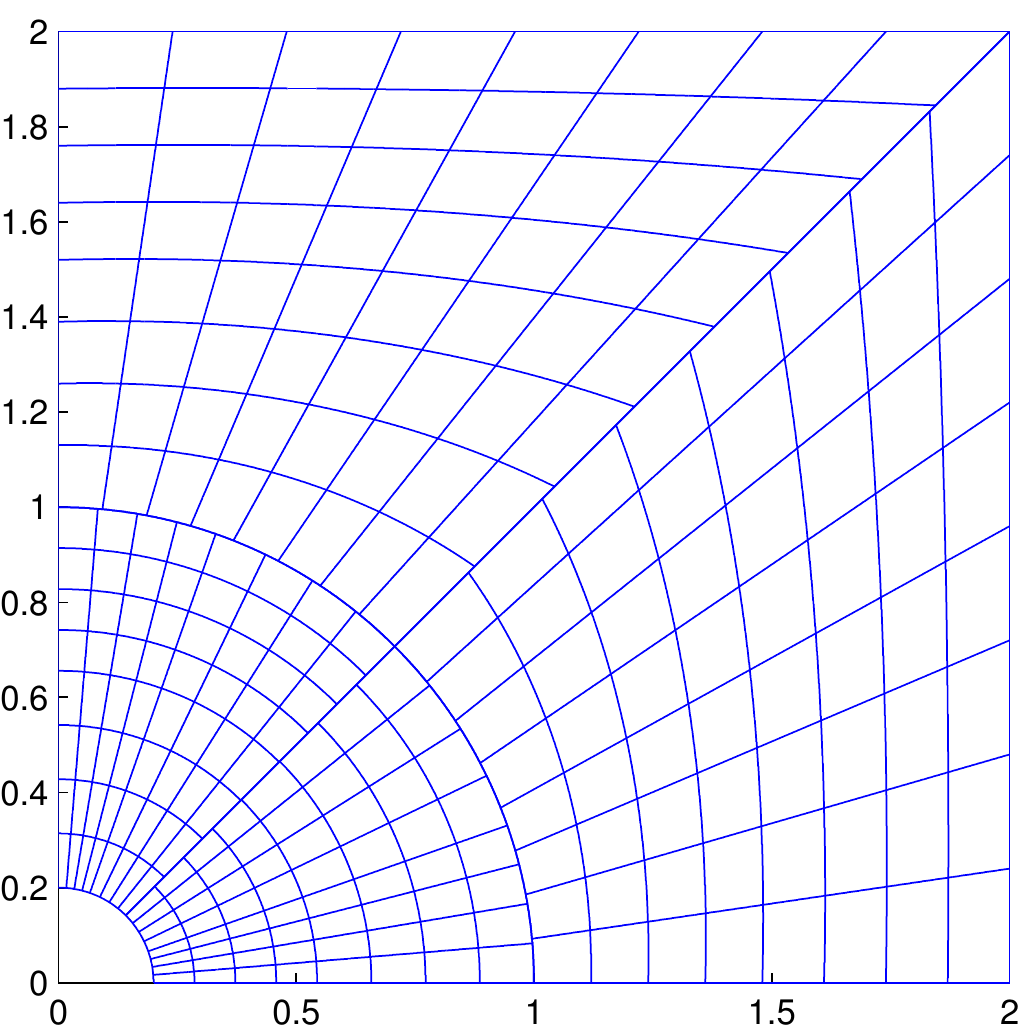}
	\caption{Problem of Subsection~\ref{numerics:subsection_linear_elasticity} - Different parametrizations of the infinite plate with a hole. From left to right: 2, 3 and 4 subdomains.}
	\label{fig:numerics:hole_setting}
\end{figure} 

\begin{figure}[htbp]  
	\includegraphics[width=0.485\textwidth]{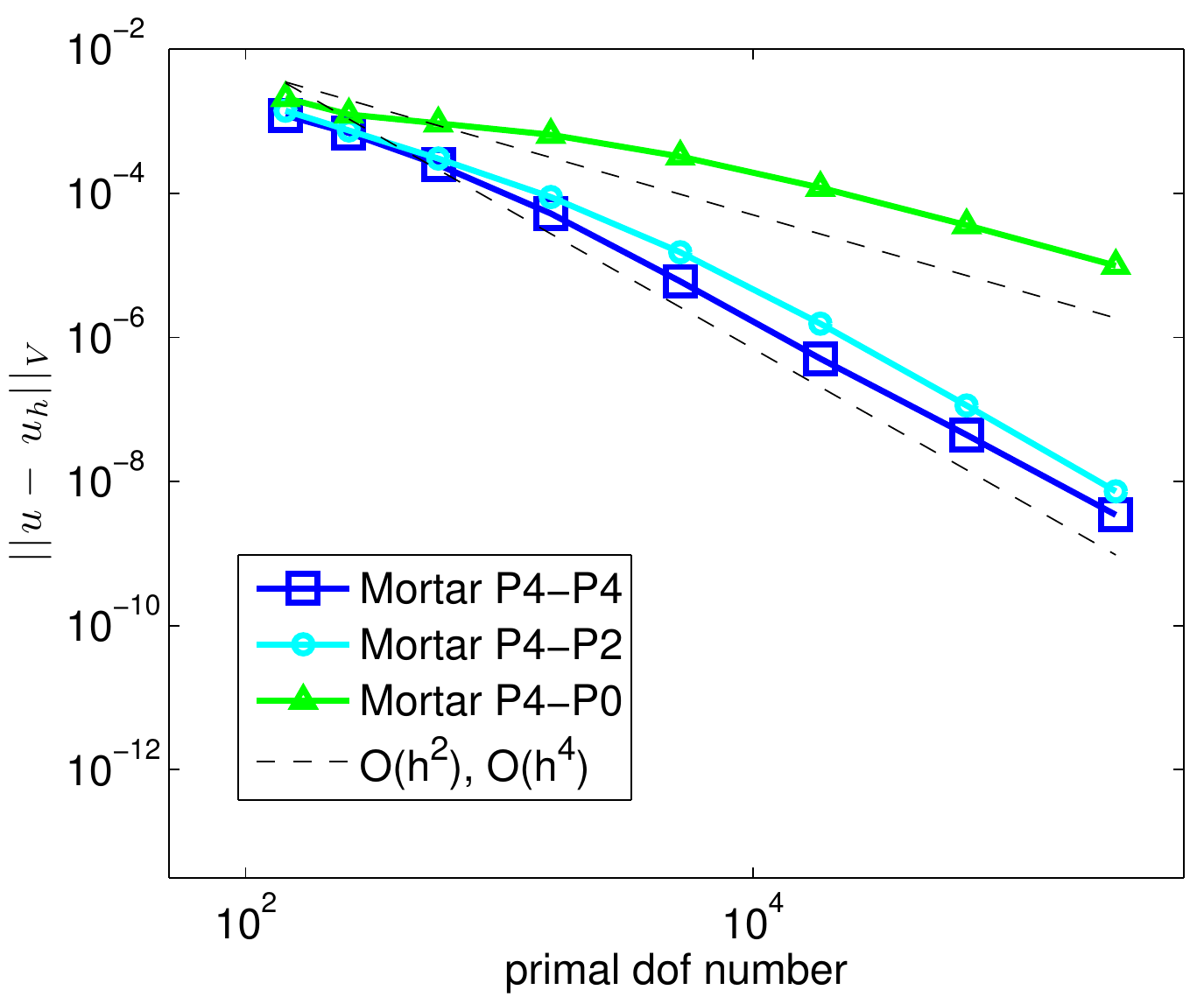}\,\,
	\includegraphics[width=0.47\textwidth]{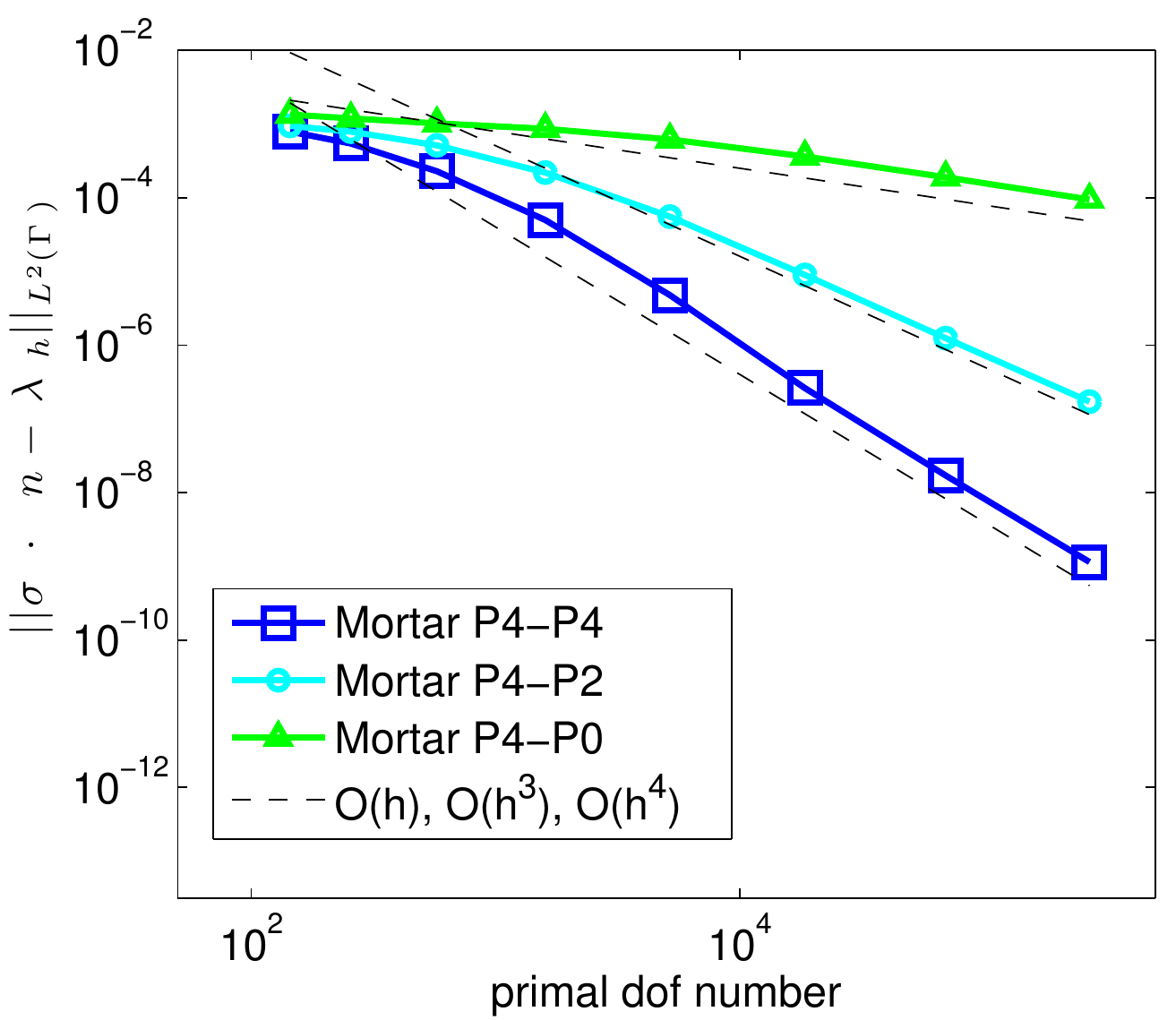}\\
         \includegraphics[width=0.485\textwidth]{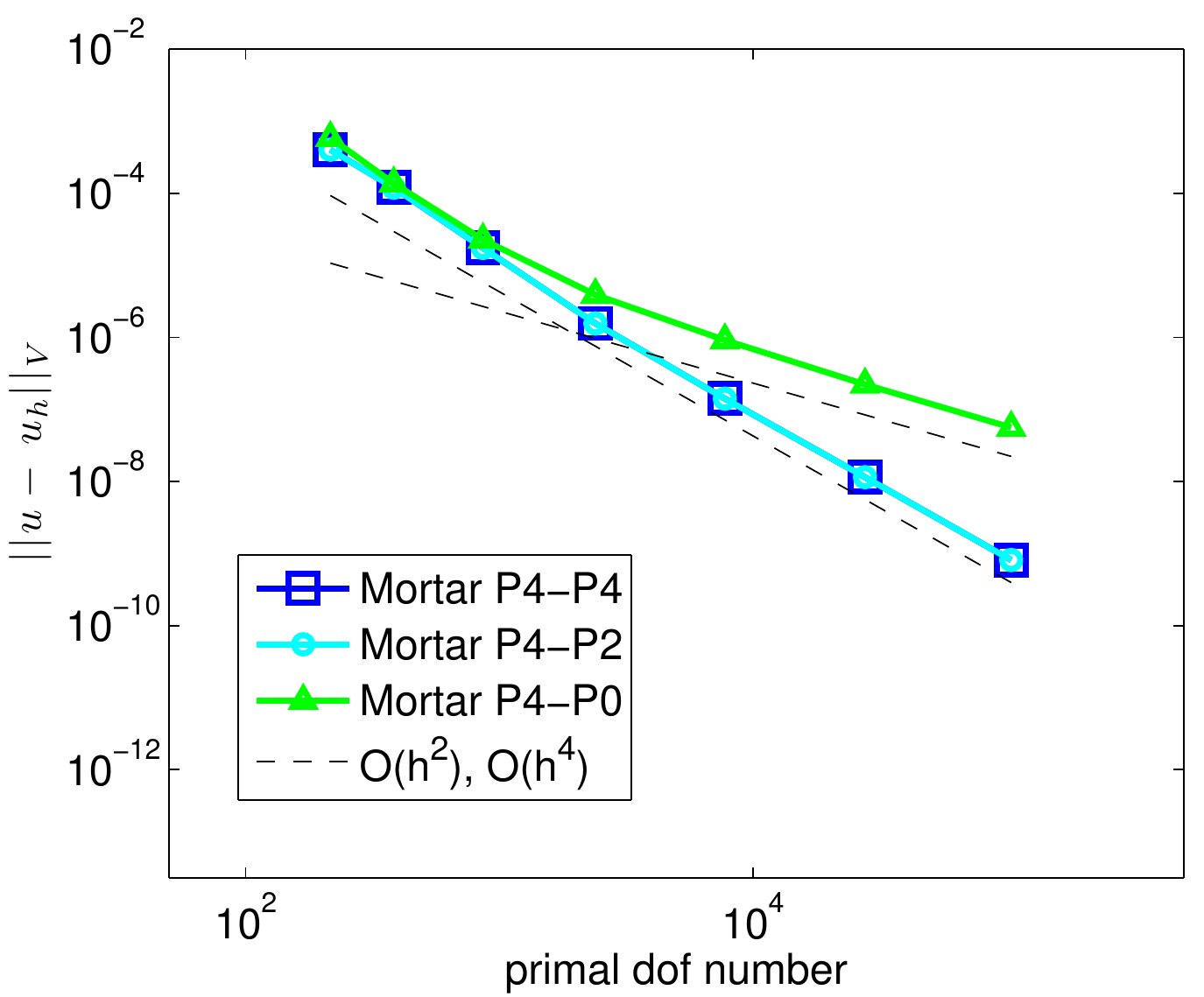}\,\,
	\includegraphics[width=0.47\textwidth]{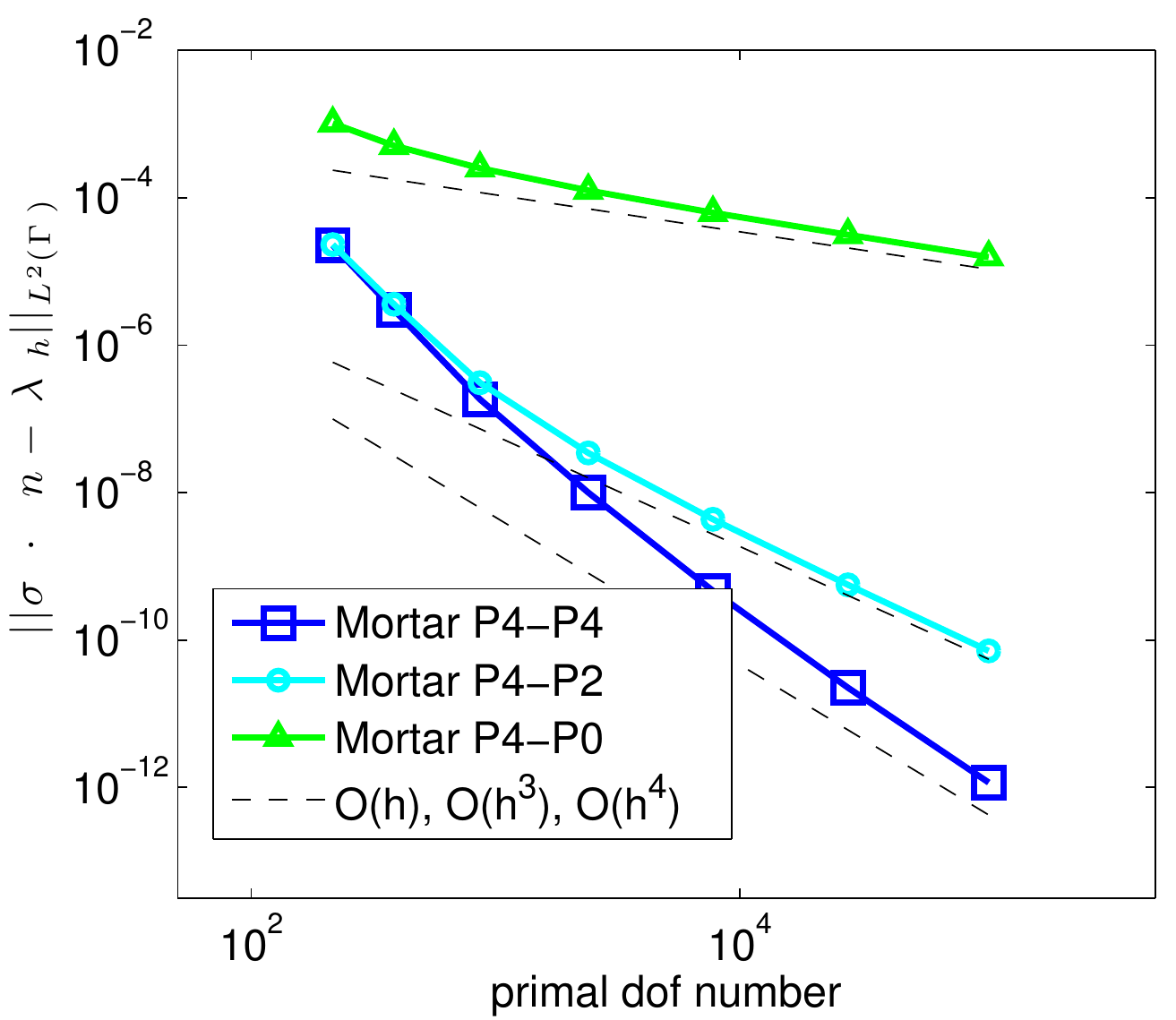}\\
	\includegraphics[width=0.485\textwidth]{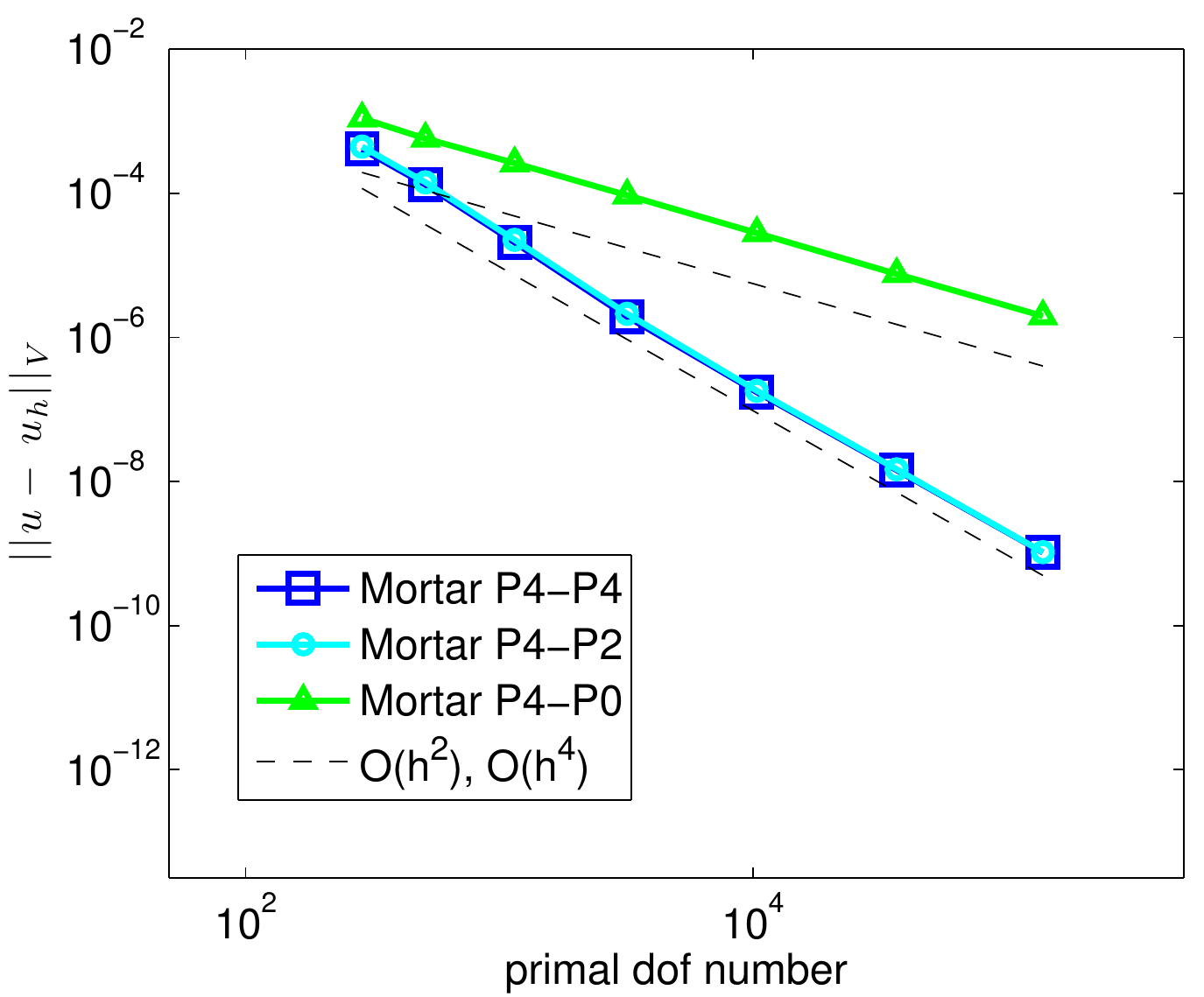}\,\,
	\includegraphics[width=0.47\textwidth]{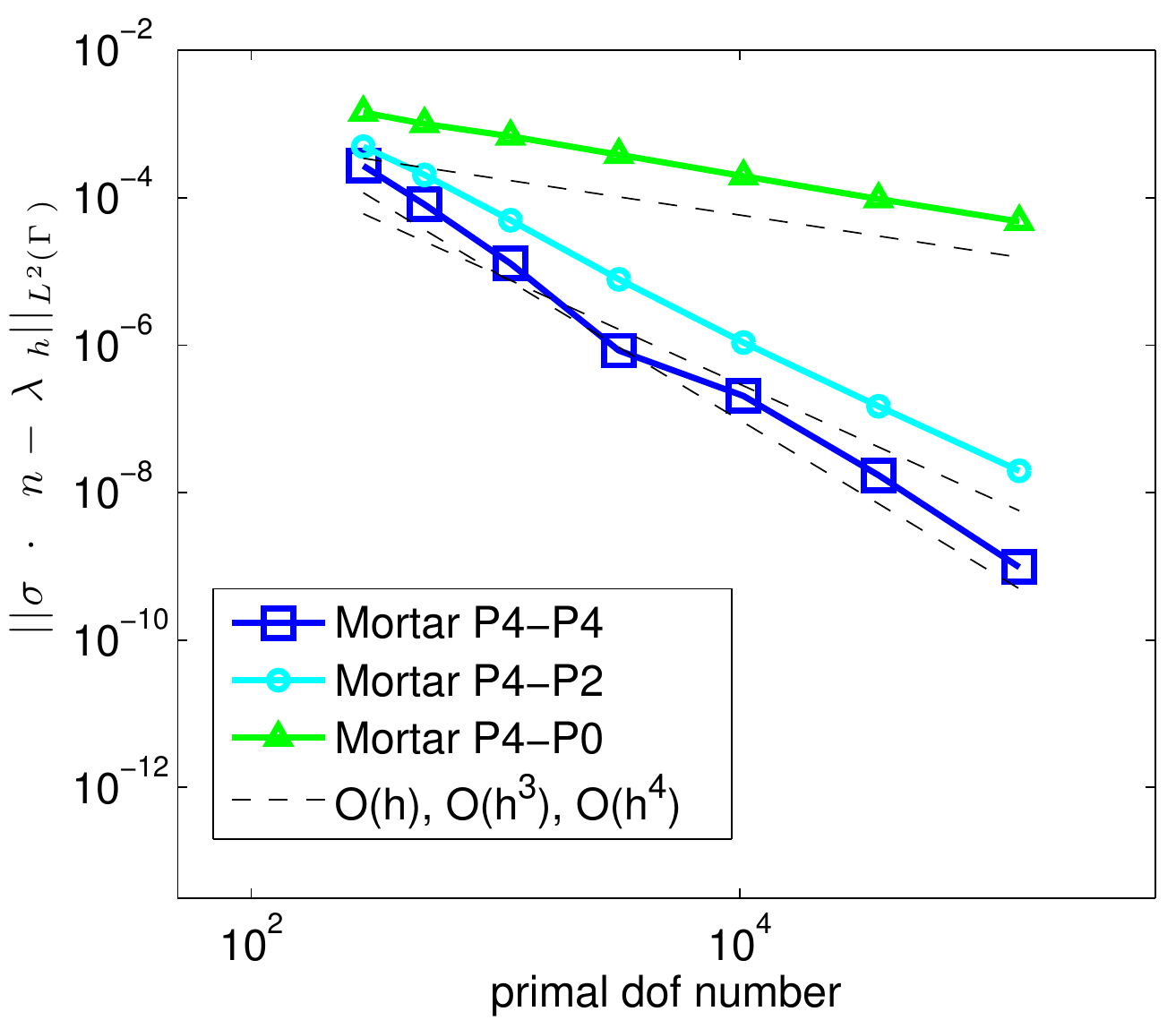}
	\caption{Problem of Subsection~\ref{numerics:subsection_linear_elasticity} - Left: broken $V$ primal error curves. Right: $L^2$ dual error curves. Respectively from the top to the bottom, for the 2, 3 and 4 patch parametrizations given on Figure~\ref{fig:numerics:hole_setting}, for several degree pairings.} 
	\label{fig:numerics:hole_errors_variousP4}
\end{figure}

We consider a domain $\Omega = \{ (x,y)\in (0,2)^2: x^2 + y^2 > 0.04\}$, shown in Figure~\ref{fig:numerics:hole_setting}, apply the exact pressure on $\partial \Omega_N= \{2\}\times(0.2,2) \cup (0.2,2)\times\{2\}$ and the symmetry condition on $\partial \Omega_{D_1} =  \{0\}\times(0.2,2)$ and $\partial \Omega_{D_2} =(0.2,2)\times\{0\}$.

Let us consider three different parametrizations of this test. First, two geometrically conforming cases which are constituted by 2 and 4 patches, respectively (see the left and the right pictures of Figure \ref{fig:numerics:hole_setting}). Only in the four patches situation, we have cross points where the boundary modification of the dual space is required. Secondly, let us consider a slave geometrical conforming case constitutes by 3 patches (see  middle  of  Figure  \ref{fig:numerics:hole_setting}) for which the boundary modification is necessary considering the same degree pairing. In each case, the results are compared to the analytical solution, and a numerical convergence study is presented.

As it is visible in the left column of Figure~\ref{fig:numerics:hole_errors_variousP4} for the broken $V$ error of the primal variable, the mortar methods considering the same degree pairing with the correct boundary modifications remain optimal in all the cases.
Moreover, we have also considered different degree pairings, and observed numerically the optimality of the methods. We note that even if we were expecting from the theory a reduced order regarding the convergence of the primal variable in broken $V$ norm of the pairing $P4-P2$, we numerically obtain for some parametrization a better order.
Additionally, in the right column of Figure~\ref{fig:numerics:hole_errors_variousP4}, the $L^2$ error of the dual variable is given for the primal degree $p=4$ and its corresponding stable reduced degrees. As already observed several times, we obtain the best approximation rates for the different degree pairings.

\section{Conclusion}
In this article an isogeometric mortar formulation was presented and investigated from a mathematical and a practical point of view. For a given primal order $p$, dual spaces of degree $p$, $p-1$ and $p-2$ were considered.
While the pairing $p/p-1$ was proven unstable, the others satisfied this condition, noting that the stability is achieved for the same degree pairing because of a boundary modification. For a given primal space, the proposed mortar methods are such that the equal order pairing guarantees optimal results, while for the pairing $p/p-2$ the convergence order can be reduced by at most $1/2$. However, we note that a boundary modification always yields additional effort for the implementation and the data structure.

Numerical examples showed that the mortar method can also handle further difficulties arising from geometry approximations and is not perturbed by singularities. Also in several cases the obtained convergence order was superior to the theoretical results.

The application of mortar methods in the isogeometric analysis in not restricted to linear problems. Since isogeometric discretizations have recently given promising results in contact problems, the 
application of the stated mortar spaces tailored to contact problems is a subject of a ongoing research.

\section*{Acknowledgments}
The first author has been supported by Michelin under the contract A10-4087, this support is gratefully acknowledged.
The third and the fourth authors have been supported by the International Research Training Group IGDK 1754, funded by the German Research Foundation (DFG) and the Austrian Science Fund (FWF). This support is gratefully acknowledged.

\bibliography{./bib/bibliography}

\end{document}